\title{On the accept-reject mechanism for Metropolis-Hastings algorithms}
\author{Nathan E. Glatt-Holtz, Justin A. Krometis, Cecilia F. Mondaini\\
\scriptsize{emails: negh@tulane.edu, jkrometi@vt.edu, cf823@drexel.edu}}
\date{}
\definecolor{Red}{rgb}{0.7,0,0.1}
\definecolor{Green}{rgb}{0,0.7,0}
\numberwithin{equation}{section}
\newtheorem{Theorem}{Theorem}[section]
\newtheorem{Proposition}[Theorem]{Proposition}
\newtheorem{Lemma}[Theorem]{Lemma}
\newtheorem{Corollary}[Theorem]{Corollary}
\newtheorem{Definition}[Theorem]{Definition}
\newtheorem{Remark}[Theorem]{Remark}
\newcommand{\be}{\begin{equation}}
\newcommand{\ee}{\end{equation}}
\newcommand{\RR}{\mathbb{R}}
\newcommand{\NN}{\mathbb{N}}
\newcommand{\indFn}[1]{1 \! \! 1_{#1}}
\newcommand{\Prb}{\mathbb{P}}
\newcommand{\bq}{\mathbf{q}}
\newcommand{\bz}{\mathbf{z}}
\newcommand{\btz}{\tilde{\mathbf{z}}}
\newcommand{\tbq}{\tilde{\mathbf{q}}}
\newcommand{\br}{\mathbf{r}}
\newcommand{\bu}{\mathbf{u}}
\newcommand{\bv}{\mathbf{v}}
\newcommand{\btq}{\tilde{\mathbf{q}}}
\newcommand{\btv}{\tilde{\bv}}
\newcommand{\btu}{\tilde{\mathbf{u}}}
\newcommand{\lbq}{\overline{\mathbf{q}}}
\newcommand{\lbv}{\overline{\mathbf{v}}}
\newcommand{\bqv}{\mathbf{z}}
\newcommand{\lbz}{\overline{\mathbf{z}}}
\newcommand{\bw}{\mathbf{w}}
\newcommand{\btw}{\tilde{\mathbf{w}}}
\newcommand{\gX}{\mathfrak{X}}
\newcommand{\gY}{\mathfrak{Y}}
\newcommand{\gM}{\nu}
\newcommand{\gN}{\rho}
\newcommand{\fm}{\phi}
\newcommand{\gm}{\psi}
\newcommand{\Vker}{\mathcal{V}}
\newcommand{\cM}{\mathcal{M}}
\newcommand{\Pker}{Q}
\newcommand{\Tker}{\bar{P}}
\newcommand{\pdms}{\eta}
\newcommand{\pdmsp}{\eta^\perp}
\newcommand{\msq}{\mu}
\newcommand{\msv}{\nu}
\newcommand{\Smap}{S}
\newcommand{\Fmap}{F}
\newcommand{\Gmap}{G}
\newcommand{\Bmap}{B_1}
\newcommand{\Cmap}{B_2}
\newcommand{\UPot}{\mathcal{U}}
\newcommand{\VPot}{\mathcal{K}}
\newcommand{\dT}{\delta}
\newcommand{\Pot}{\Phi}
\newcommand{\HmT}{\hat{\Psi}} 
\newcommand{\e}{\mathbf{e}}
\newcommand{\cC}{\mathcal{C}}
\newcommand{\cB}{\mathcal{B}}
\newcommand{\cN}{\mathcal{N}}
\newcommand{\cR}{\mathcal{R}}
\newcommand{\cS}{\mathcal{S}}
\newcommand{\cSS}{\mathcal{T}}
\newcommand{\cX}{\mathcal{X}}
\newcommand{\cK}{\mathcal{K}}
\newcommand{\arFn}{\alpha}
\newcommand{\harFn}{\hat\alpha}
\newcommand{\Sol}{\hat{S}}
\newcommand{\cSol}{\hat{\mathcal{S}}}
\newcommand{\Projq}{\Pi_1}
\newcommand{\Ham}{\mathcal{H}}
\newcommand{\bS}{\bar{S}}
\newcommand{\spq}{X}
\newcommand{\spv}{Y}
\newcommand{\Sstp}{\Gamma} 
\newcommand{\Si}{\Gamma^i} 
\newcommand{\da}{\delta_1}
\newcommand{\db}{\delta_2}
\newcommand{\agp}{f} 
\newcommand{\ndt}{\rho}
\newcommand{\fd}{N} 
\newcommand{\fu}{f_1}
\newcommand{\fv}{f_2}
\newcommand{\tJ}{\tilde{J}}
\newcommand{\tHam}{\widetilde{\mathcal{H}}}
\newcommand{\f}{\mathbf{f}}
\newcommand{\SA}{\hat{S}^{(A)}} 
\newcommand{\SB}{\hat{S}^{(B)}} 
\newcommand{\chS}{\mathcal{\hat{S}}}
\newcommand{\chT}{\mathcal{\hat{T}}}
\begin{document}
\markboth{On the accept-reject mechanism for Metropolis-Hastings algorithms.}
{N. E. Glatt-Holtz, J. A. Krometis, C. F. Mondaini}

\maketitle

\begin{abstract}
  This work develops a powerful and versatile framework for
  determining acceptance ratios in Metropolis-Hastings type Markov
  kernels widely used in statistical sampling problems.  Our approach
  allows us to derive new classes of kernels which unify random walk
  or diffusion-type sampling methods with more complicated `extended
  phase space' algorithms based around ideas from Hamiltonian
  dynamics.  Our starting point is an abstract result developed in the generality of measurable state spaces that addresses proposal kernels that possess a certain involution structure. Note that, while this
  underlying proposal structure suggests a scope which includes
  Hamiltonian-type kernels, we demonstrate that our abstract result
  is, in an appropriate sense, equivalent to an earlier general state
  space setting developed in \cite{Tierney1998} where the connection
  to Hamiltonian methods was more obscure.

  On the basis of our abstract results we develop several new classes
  of extended phase space, HMC-like algorithms.  Firstly we tackle the
  classical finite-dimensional setting of a continuously distributed
  target measure.  We then consider an infinite-dimensional framework
  for targets which are absolutely continuous with respect to a
  Gaussian measure with a trace-class covariance. Each of these
  algorithm classes can be viewed as `surrogate-trajectory' methods,
  providing a versatile methodology to bypass expensive gradient
  computations through skillful reduced order modeling and/or data
  driven approaches as we begin to explore in a forthcoming companion
  work, \cite{glatt2020LOLMCMC}.  On the other hand, along with the
  connection of our main abstract result to the framework in
  \cite{Tierney1998}, these algorithm classes provide a unifying
  picture connecting together a number of popular existing algorithms
  which arise as special cases of our general frameworks under
  suitable parameter choices. In particular we show that, in the
  finite-dimensional setting, we can produce an algorithm class which
  includes the Metropolis adjusted Langevin algorithm (MALA) and
  random walk Metropolis method (RWMC) alongside a number of variants
  of the HMC algorithm including the geometric approach introduced in
  \cite{GirolamiCalderhead2011}.  In the infinite-dimensional
  situation, we show that the algorithm class we derive includes the
  preconditioned Crank-Nicolson (pCN), $\infty$MALA and $\infty$HMC
  methods considered in \cite{beskos2008, Beskosetal2011,
    cotter2013mcmc} as special cases.
\end{abstract}

{\noindent
  \small {\it \bf Keywords: } Markov Chain Monte Carlo (MCMC)
  Algorithms, Metropolis-Hastings Algorithms, Sampling on Abstract State Spaces,
  Hamiltonian Monte Carlo, Surrogate Trajectory Methods. \\
  {\it \bf MSC2020: 65P10, 65C05 } }

\setcounter{tocdepth}{1}
\tableofcontents

\newpage

\section{Introduction}

A central concern in modern computational probability and statistics
is the development of effective sampling methods.  This is a
non-trivial task particularly for high-dimensional, non-canonical
probability distributions with elaborate correlation structures.
Indeed, across a great diversity of situations in the pure and applied
sciences and in engineering, it is crucial to be able to accurately
resolve observable quantities from complex statistical models which
naturally arise.  As such, sampling is a subject of ubiquitous
significance in a wide variety of application settings.

One of the most successful methodologies for sampling is the Markov
Chain Monte Carlo (MCMC) approach.  Starting from a given ``target''
probability distribution $\mu$ sitting on a state space $\spq$ one
aims to find a Markov kernel $P(\bq, d\tbq)$ which holds $\mu$ as an
invariant, that is
\begin{align}
  \int P(\bq, d\tbq)\mu(d\bq) = \mu(d\tbq).
    \label{eq:inv:msr:cond}
\end{align}
By iteratively sampling from such a kernel $P$ as
$\bq_n \sim P(\bq_{n-1}, d \br)$ one hopes that $P$ maintains
desirable mixing properties so that, for example,
\begin{align*}
    \lim_{N \to \infty} \frac{1}{N}\sum_{n =1}^N \phi(\bq_n)
    = \int \phi(\br)\mu(d\br)
\end{align*}
across a variety of observables $\phi: \spq \to \RR$.

Of course, deriving such kernels $P$ and determining their
effectiveness is an art, the subject of a wide and rapidly growing
literature.  Here the Metropolis-Hastings method,
\cite{metropolis1953equation, hastings1970monte}, has served as a core
foundation for the subject.  See also e.g. \cite{Li2008,
  robert2013monte, Betancourt2019} for more general background.  The
basic idea of this method is to formulate a ``proposal'' Markov kernel
$Q(\bq, d\tbq)$ which does not necessarily maintain the invariance
\eqref{eq:inv:msr:cond} but which is computationally feasible to
sample from.  One then introduces an ``accept-reject'' mechanism which
is used to correct for bias in $Q$ with respect to the given target
measure $\mu$. In this fashion one builds a `Metropolis-Hastings'
kernel $P$ from $Q$ which maintains \eqref{eq:inv:msr:cond} as
follows: Starting from a given current state $\bq_{n-1}$, one samples
a proposal state $\bar{\bq}_{n} \sim Q(\bq_{n-1}, d \br)$.  Next one
determines an acceptance probability $\arFn \in [0,1]$.  The next step in the
chain is then set to be $\bq_n := \bar{\bq}_{n}$ with probability $\arFn$ and $\bq_n := \bq_{n-1}$ otherwise.  Of course, the selection
of $Q$ and the derivation of the appropriate accept-reject mechanism
$\arFn$ depends heavily on the structure of $\mu$ and underlying state
space $\spq$ which $\mu$ sits upon.  Nevertheless, the
Metropolis-Hastings methodology encompasses many popular and effective
sampling methods, including random walk Monte Carlo (RWMC), the
Metropolis Adjusted Langevin (MALA) approach based on numerical
discretizations of appropriate stochastic dynamics
\cite{besag1994comments, roberts1996exponential}, and the Hamiltonian
(or Hybrid) Monte Carlo (HMC) algorithm \cite{duane1987hybrid,
  neal1993probabilistic}.

In this paper, we formulate a simple and quite flexible framework for
determining the acceptance ratio $\arFn$ developed in an abstract
setting applicable in the generality of measurable spaces.  The main
result of this work, \cref{thm:gen:rev:new}, unifies random walk or
diffusion-type approaches with more complicated ``extended phase
space'' algorithms like HMC.  In particular, while our framework
appears ``HMC-like'' at first glance, we show that it in fact
represents an alternative formulation of the measure-theoretic
approach to Metropolis-Hastings kernels introduced in
\cite{Tierney1998} where the connection to Hamiltonian algorithms is
more obscure. On the other hand, \cref{thm:gen:rev:new} allows us to
derive novel classes of HMC-like algorithms both in the finite and
infinite-dimensional settings.  These ``extended phase space''
algorithms provide a versatile methodology to bypass expensive
gradient computations through skillful reduced order modeling and/or
data driven approaches as we begin to explore in a forthcoming
companion work, \cite{glatt2020LOLMCMC}.  Moreover, our algorithms
provide a unified picture connecting a number of popular existing
algorithms which arise as special cases under suitable parameter
choices.  Altogether, the theoretical unity and reach of our main
result provides a basis for deriving novel sampling algorithms while
laying bare important relationships between existing methods.

\subsection{Background: Formulating the accept-reject mechanism}

Before outlining the main contributions of this work in more detail we
first lay out some further background on the Metropolis-Hastings
approach to give some context for our results here.

The original setting developed in \cite{metropolis1953equation,
  hastings1970monte}, addresses the case of a continuously distributed
target measure $\mu(d \br) = p(\br) d\br$ on a state space
$\spq = \RR^\fd$.  Here one considers proposal kernels which are also
of the continuous form $Q(\bq, d \br) = q(\bq, \br)d\br$.  A simple
calculation aimed at establishing detailed balance for the resulting Markov transition kernel $P$ as described above, namely that
\begin{align}\label{eq:det:bal}
	P(\bq, d \btq)\mu(d \bq) =P(\btq, d \bq)\mu(d \btq), 
\end{align}
a condition which is immediately seen to be sufficient for the invariance
\eqref{eq:inv:msr:cond}, yields the acceptance probability
\begin{align}
    \arFn(\bq,\tbq) := 
    1 \wedge \frac{ p(\tbq) q(\tbq, \bq) }{ p(\bq) q(\bq, \tbq) },
        \label{eq:hastings:A:R}
\end{align}
where $\bq$ is the current state and $\tbq$ is the proposed next move.

While \eqref{eq:hastings:A:R} encompasses a number of popular
algorithms including the RWMC and MALA methods, the determination of
the acceptance probably $\arFn$ can be a much more complicated and
delicate task in other cases of interest, particularly for popular
Hamiltonian (HMC) algorithms.  Moreover, one is often interested in
settings where the target measure sits on a more general state space.
Indeed, one class of Metropolis-Hastings algorithms motivating our
work here addresses the situation where $\spq$ is an
infinite-dimensional Hilbert space and one considers target measures
which are absolutely continuous with respect to a Gaussian probability
measure $\mu_0$ so that
\begin{align}
    \mu(d\bq) \propto 
    e^{- \Pot(\bq)}\mu_0(d\bq),
    \label{eq:gauss:ref:msr}
\end{align}
for an appropriate, $\mu_0$-integrable potential $\Pot: \spq \to
\RR$. This is an important and rich category of measures which arise
naturally in the Bayesian approach to PDE inverse problems,
\cite{stuart2010inverse,martin2012stochastic, bui2014analysis,
  dashti2017bayesian, PetraEtAl2014, BuiNguyen2016,
  dashti2017bayesian} and also in computational chemistry,
\cite{ReVa2005, hairer2005analysis, hairer2007analysis, HaStVo2009,
  HSV2011}.
  
One of our aims in this manuscipt is to provide new insight into the
derivation of a recently discovered collection of Metropolis-Hastings
algorithms \cite{beskos2008, cotter2013mcmc, Beskosetal2011,
  beskos2017geometric} that provide a basis to effectively sample from
such infinite-dimensional measures of the form
\eqref{eq:gauss:ref:msr}. The idea in these works,
\cite{beskos2008,cotter2013mcmc, Beskosetal2011}, is to appropriately
precondition stochastic or Hamiltonian dynamics related to the target
measure $\mu$ and then to make a delicate choice for the numerical
discretization of these equations so that one obtains an effective
proposal kernel $Q$. As usual, an appropriate $\arFn$ is then
introduced to correct for bias stemming from the original dynamics,
the numerical discretization of these dynamics, or both.  In
particular, this infinite-dimensional approach results in
an analogue of the random walk Monte Carlo algorithm with the so
called preconditioned Crank-Nicolson (pCN) algorithm as well as
infinite-dimensional formulations of the MALA \cite{beskos2008,
  cotter2013mcmc} and HMC \cite{Beskosetal2011, beskos2017geometric}
algorithms.  Each of these methods has shown great promise by
partially beating the ``curse of dimensionality'' as borne out by
recent theoretical developments \cite{hairer2014spectral,
  eberle2014error, glatt2020mixing, bou2020two} and by effectively
resolving certain challenging test problems \cite{BuiNguyen2016,
  beskos2017geometric, borggaard2020bayesian, glatt2020LOLMCMC}.

Regarding the derivation of the $\alpha$ for the pCN and for the
infinite-dimensional MALA algorithms from
\cite{beskos2008,cotter2013mcmc}, one can make use of the abstract
formulation due to Tierney \cite{Tierney1998} to determine $\arFn$.
Tierney's approach may be seen as an extension of
\cite{hastings1970monte} to general state spaces.  This is formulated
as follows: Given a proposal kernel $Q$ and a target measure $\mu$, if
the measures 
\begin{align}
  \pdms(d \bq, d\tbq) := Q(\bq, d \tbq) \mu(d \bq),
  \quad \pdmsp(d \bq, d\tbq) := \pdms(d \tbq, d\bq) 
    \label{eq:eta:rev:eta}
\end{align}
are mutually absolutely continuous then one can define the acceptance
probability $\arFn$ via the Radon-Nikodym derivative, namely
\begin{align}
  \arFn(\bq, \tbq)  :=  1 \wedge \frac{ d\pdmsp}{d \pdms}(\bq, \tbq)
  \label{eq:tieney:AR}
\end{align}
to achieve detailed balance \`a la \eqref{eq:det:bal}.  Here note that
in the appropriate finite-dimensional setting \eqref{eq:tieney:AR}
reduces to \eqref{eq:hastings:A:R}.

On the other hand, Tierney's elegant formulation in \cite{Tierney1998}
does not appear to cover HMC-type `extended phase space' algorithms in
an obvious way, even in the original finite-dimensional formulation
from \cite{duane1987hybrid}.  In HMC sampling, the proposal is
generated by interpreting the current state $\bq$ as a position
variable in a Hamiltonian system.  One identifies a Hamiltonian $\Ham$
such that the marginal of the Gibbs measure
\begin{align}
    \cM(d\bq ,d \bv) \propto e^{-\Ham(\bq, \bv)} d \bq d \bv
    \label{eq:Gibbs:formal}
\end{align}
onto position space corresponds to the desired target measure $\mu$.
Augmenting with a `momentum' (or sometimes `velocity') variable $\bv$
sampled from the $\bv$-marginal of $\cM$ and then integrating the
associate Hamiltonian dynamics in $(\bq,\bv)$-space via an
appropriately chosen approximate integrator $\hat{S}$, one obtains
\begin{equation}\label{eq:hmc:map}
    (\btq,\btv) := \hat{S}(\bq,\bv).
\end{equation}
The proposal $\btq$ is then given by
$\btq = \Projq \circ \hat{S}(\bq,\bv)$ where $\Projq$ represents
projection onto the position variable from the extended phase space.
Then the acceptance ratio $\harFn$ is specified as\footnote{For the
  infinite-dimensional formulation in \cite{Beskosetal2011},
  \eqref{eq:Gibbs:formal} is formal and made sense of with respect to
  a reference Gaussian measure while $\mathcal{H}$ is typically almost
  surely infinite.  As such, the ``change in energy''
  $\Delta \mathcal{H}(\bq, \bv) = \Ham(\bq,\bv)- \Ham(\hat{S}(\bq,\bv))$ in \eqref{eq:hmc:ar} is computed via a separate formula that
  accounts for a certain ``cancellation of infinities'' rather than by
  computing $\mathcal{H}$ directly. See \cref{sec:HMC:inf:dim} below
  and also \cite{Beskosetal2011} for further details.}
\begin{equation}\label{eq:hmc:ar}
  \harFn(\bq,\bv) 
  := 1 \wedge \exp \left[ \Ham(\bq,\bv)
                                      - \Ham\left(\hat{S}(\bq,\bv)\right) \right],
\end{equation}
where $\Ham$ is the Hamiltonian associated with the system.  

Note carefully that $\harFn$ in \eqref{eq:hmc:ar} depends not on the
current and proposed states $\bq,\btq$ as in \eqref{eq:tieney:AR}, but
rather on the current state $\bq$ and the initially sampled auxiliary
variable $\bv$.  As such, for Hamiltonian Monte Carlo algorithms, the
acceptance probability $\harFn$ is not accommodated by the mechanism
\eqref{eq:tieney:AR}, at least not in an obvious fashion. It is also
important to emphasize that the validity of this accept-reject
mechanism for the classical HMC algorithm \eqref{eq:hmc:ar} requires
one to formulate a numerical resolution $\hat{S}$ of the Hamiltonian
dynamics which respects certain delicate structural properties of the
original Hamiltonian system; namely one typically specifies $\hat{S}$
as a ``geometric integration scheme'', one which preserves volumes on
phase space and whose dynamics maintains certain `reversibility
properties' (see \eqref{eq:rev:mom:flip:inv} below).  The latter
reversibility condition reflects an indispensable underlying
involutive structure that we exploit here.

Starting from \cite{duane1987hybrid, neal1993probabilistic}, the core
ideas of this Hamiltonian approach have expanded into a profusion of
`extended phase space' methods.  This literature includes a number of
variations on the Hamiltonian and symplectic structure leading to the
formulation of \eqref{eq:hmc:map} as well as use of `surrogate
dynamics' methods to reduce the cost or complexity of numerically
expensive gradient computations which arise.  See e.g.
\cite{bernardo1998regression,rasmussen2003gaussian,Li2008,GirolamiCalderhead2011,neal2011mcmc,Beskosetal2011,hoffman2014no,meeds2014gps,lan2016emulation,AFM2017,zhang2017precomputing,zhang2017hamiltonian,beskos2017geometric,lu2017relativistic,li2019neural,radivojevic2020modified} and numerous other containing
references.  This large and rapidly expanding literature is reflective
of the success and effectiveness of the Hamiltonian approach as for
example can be seen in the wide adoption of the STAN software package,
\cite{GelmanLeeGuo2015, Stan2016}, in recent years.

It is therefore of great interest to provide a unified theoretical
foundation for these various works on extended phase space methods and,
if possible, to place them in the context of the original RWMC methods
dating all the way back to \cite{metropolis1953equation,
  hastings1970monte}.  A number of recent contributions preceding this
current work are notable in this regard.  For example the monograph
\cite{bou2018geometric} building on \cite{fang2014compressible}
provides a lucid and partially self-contained survey explaining
various underlying mechanisms involved in the derivation of
\eqref{eq:hmc:ar} in the original finite-dimensional setting.  On the
other hand recent work \cite{levy2017generalizing,
  radivojevic2020modified,neklyudov2020involutive,andrieu2020general} has identified the connection between HMC
methods and the involutive algorithms of Green
\cite{green1995reversible, geyer2003metropolis,
  geyer2011introduction}.  Regarding the infinite-dimensional version
of the HMC algorithm developed in \cite{Beskosetal2011,
  beskos2013advanced, beskos2017geometric} the finite-dimensional
setting is insufficient to derive an appropriate $\harFn$ as in
\eqref{eq:hmc:ar}.  In the original framing from \cite{Beskosetal2011}
this infinite-dimensional case was justified via a finite-dimensional
approximation scheme reminiscent of an extended body of work on the
invariance of the Gibbs measure for Hamiltonian PDEs, see
\cite{bourgain1994periodic} and more recently
\cite{nahmod2019randomness, benyi2019probabilistic} for a
comprehensive survey of this research direction.  On the other hand
subsequent work \cite{beskos2013advanced, beskos2017geometric},
provides notable insights into the underlying structural
considerations at play in the formulation of the infinite dimensional
setting by making delicate use of the Cameron-Martin Theorem.

\subsection{Overview of Our Contribution}

We turn finally to describe our contributions herein.  The main result
of this work is \cref{thm:gen:rev:new}, which provides a single,
simple formulation that subsumes all of the above algorithms and, we
expect, many other methods of interest not directly addressed here.
The result is developed in the generality of measurable state spaces
$(\spq, \Sigma_\spq)$ to reversibly sample from a given target
probability measure $\mu$ defined on $\Sigma_\spq$.

To proceed we consider Markov kernels formulated as follows: select an
additional measurable space $(\spv, \Sigma_\spv)$ and form the
extended phase space $\spq \times \spv$. Fix any proposal map
$S: \spq \times \spv \to \spq \times \spv$ which is an involution
operation, namely such that $S^2 = I$, where we denote here and throughout $S^2 \coloneqq S \circ S$, and any reference Markov kernel
$\Vker: \spq \times \Sigma_{\spv} \to [0,1]$.  We obtain a proposal
kernel $Q(\bq,d \btq)$ from $S$ and $\Vker$ by sampling
$\bv \sim \Vker(\bq, d \bw)$ and then taking a proposed step as
$\btq = \Pi_1 \circ S(\bq, \bv)$, where $\Pi_1$ is the projection onto
the ``position variable'', namely $\Pi_1(\btq, \btv) = \btq$.  Thus,
in measure theoretic language, we have
$Q(\bq, d \tbq) = (\Pi_1 \circ S(\bq, \cdot))^* \Vker(\bq, d \tbq)$
where $f^{*}\nu$ denotes the \emph{pushforward} of a measure $\nu$ by
a function $f$; see \cref{subsec:prelim:meas} below for details.

For such data $\mu$, $S$ and $\Vker$, our result then determines a
suitable acceptance probability as a function of $\bq$ and $\bv$ given
by
\begin{align}\label{eq:ar:abs:intro}
	\harFn(\bq, \bv) := 1 \wedge 
	\left(\frac{d S^* \cM}{d \cM}(\bq,\bv) \right),
	\quad \text{ where }
	\quad
	\cM( d \bq, d\bv) :=  \Vker(\bq, d \bv) \mu( d \bq)
\end{align}
and where $d S^* \cM/d \cM$ denotes the Radon-Nikodym derivative of
the pushforward measure $S^* \cM$ with respect to $\cM$.  Note that
this acceptance ratio is well-defined when $S^* \cM$ is absolutely
continuous with respect to $\cM$ but this condition can be relaxed, see \cref{rem:noabscont} below.
By adopting this measure theoretic language of pushforwards and
Radon-Nikodym derivatives, as reviewed for our purposes in
\cref{subsec:prelim:meas}, \cref{thm:gen:rev:new} affords a simple
proof based around the intuition of using $\harFn$ to balance inflows
and outflows between any two states $\bq$ and $\btq$ of the Markov
chain.

We observe that \cref{thm:gen:rev:new} has a direct interpretation as
an abstraction of the HMC method, but one which provides the crucial
insight that the role of the `Gibbs measure' as in
\eqref{eq:Gibbs:formal} and that of the `numerical integrator' as in
\eqref{eq:hmc:map} can be largely disconnected insofar as achieving
the detailed balance condition, \eqref{eq:det:bal}, is concerned.  In
this analogy $\cM$, given as in \eqref{eq:ar:abs:intro}, specifies the
Gibbs measure.  Regarding the involution $S$, we notice
that the solution map $\hat{S}$ of a Hamiltonian system or any
reasonable numerical resolution thereof is not typically an
involution.  However, by taking $S := R \circ \hat{S}$, where
$R(\bq, \bv) = (\bq,-\bv)$ is the momentum flip operation, one does
indeed obtain an involution for a large class of ``geometric''
integration schemes $\hat{S}$.  Thus, so long as $S^*\cM$ is
absolutely continuous with respect to $\cM$, the algorithmic elements
$\mu$, $\Vker$ and $S = R \circ \hat{S}$ are otherwise unrelated
insofar as the scope of \cref{thm:gen:rev:new} is concerned.

With \cref{thm:gen:rev:new} in hand, we proceed to demonstrate the
reach and theoretical unity that this result provides by detailing how
it can be used to derive and to analyze a variety of specific
Metropolis-Hastings type algorithms.  We first observe that we can use
\cref{thm:gen:rev:new} to recover the complete framework of the more
traditional Metropolis-Hastings techniques up to and including the
abstract generality of Tierney's formulation in \cite{Tierney1998}.
Indeed, in \cref{sec:tierney}, we show that the formalisms in
\cite{Tierney1998} and in \cref{thm:gen:rev:new} may be ultimately
viewed as having an equivalent scope.  On the other hand the
`HMC-like' character of \cref{thm:gen:rev:new} described in the
previous paragraph suggests an immediate connection to Hamiltonian
type extended phase space methods.  In \cref{sec:approx:ham:methods},
we use this observation as a crucial starting point for deriving a
variety of HMC-like sampling methods culminating in the derivation of
\cref{alg:gHMC:FD:1}, \cref{alg:gHMC:FD:2}, \cref{alg:gHMC:FD:3} and
\cref{alg:method} provided below.  
The final \cref{sec:class:ex} outlines multiple ways that a selection of important sampling methods widely considered in the literature may be subsumed under \cref{thm:gen:rev:new}. Here to obtain our selection of $\Vker$ and $S$ for a given target $\mu$, we draw on various results in \cref{sec:approx:ham:methods} and
\cref{sec:tierney}.

Regarding the equivalence of \cite{Tierney1998} and
\cref{thm:gen:rev:new} we first consider in \cref{subsec:inv:F}
proposal kernels of the form
\begin{align}\label{eq:push:Q:ker}
	Q(\bq, d \tbq) = F(\bq, \cdot)^* \Vker(\bq, d \tbq), 
\end{align}
for some $F: \spq \times \spv \to \spq$ such that $F(\bq, \cdot)$ is
invertible for each fixed $\bq$.  We then demonstrate that there
exists a unique involution $S$ whose projection onto the position
space is $F$. With this $S$ in hand, we show that the main result in
\cite{Tierney1998}, namely the formulation \eqref{eq:tieney:AR},
follows as a special case of \cref{thm:gen:rev:new}.  See
\cref{prop:inv:S}, \cref{thm:Tierney} below for precise details.

Note that the scope of \eqref{eq:push:Q:ker} trivially encompasses the
results in \cite{Tierney1998} as seen, by selecting, for any given
proposal kernel $Q$, $F(\bq,\bv) := \bv$ and $\Vker := Q$.  On the
other hand, numerous relevant examples including RWMC and MALA and
even their Hilbert space counterparts,
\cite{beskos2008,cotter2013mcmc}, can be recovered from nontrivial
formulations of $F$ and $\Vker$.  This leads to the interesting
observation that algorithms of interest can be recovered through
multiple, non-equivalent applications of \cref{thm:gen:rev:new}.

Conversely, in the other direction, we observe in
\cref{subsec:alt:proof} that the results from \cite{Tierney1998} can
be suitably employed to provide a second, independent, proof of
\cref{thm:gen:rev:new}.  Here, from the given data of
\cref{thm:gen:rev:new}, namely the target $\mu$, the involution $S$
and kernel $\Vker$, we proceed by considering a suitable deterministic
proposal kernel specified by $S$ acting on the extended phase space.  The
results in \cite{Tierney1998} are then applied for $\mu = \cM$ on
this extended phase space.  In this way we obtain a kernel which, when
appropriately integrated against $\Vker$, yields the desired kernel
specified by \cref{thm:gen:rev:new}.  These details are given in
\cref{subsec:alt:proof} below.

In \cref{sec:approx:ham:methods} we use \cref{thm:gen:rev:new} and the
intuition it provides to develop some classes of concrete,
extended-phase space algorithms.  We may view our methods as
introducing additional `degrees of freedom' for the selection of
parameters used for tuning HMC methods. Crucially the approach
includes functional parameters which allow greater latitude in
selecting the numerical integration procedure used for approximating
Hamiltonian dynamics.  In particular our formulation provides a
flexible means of using reduced order modeling or data driven
approaches to avoid expensive gradient computations which arise.  As
already mentioned above, we begin to explore such applications for
these algorithms in a forthcoming companion work
\cite{glatt2020LOLMCMC}.  Note furthermore that our methods provide a
unified view for an extensive existing literature around so called
`surrogate trajectory methods' \cite{bernardo1998regression,
  rasmussen2003gaussian, Li2008, neal2011mcmc, meeds2014gps,
  lan2016emulation, zhang2017precomputing, zhang2017hamiltonian,
  beskos2017geometric, li2019neural, radivojevic2020modified}.  On the
other hand, in \cref{sec:class:ex}, we show that a number of classical
variants of HMC, MALA, RWMC as well as the Hilbert space methods
$\infty$HMC, pCN, $\infty$MALA all fall as special cases of these
algorithms introduced in \cref{sec:approx:ham:methods}, under an
appropriate choice of algorithmic parameters.

In \cref{sec:ghm:main} we begin by developing the finite-dimensional
setting of a continuously distributed target measure.  To this end we
provide a brief but self-contained tour of some geometric numerical
methods for Hamiltonian systems highlighting how these methods connect
back to the setting of \cref{thm:gen:rev:new}.  In particular this
presentation emphasizes the fundamental role that volume preserving
methods, palindromic splitting structures and reversibility play in
determining the involutive mapping $S$ at the center of
\cref{thm:gen:rev:new}, while suggesting further scope for deriving
other Hamiltonian-type sampling methods in the finite-dimensional
setting.  Notably we address situations where the integrator is not
assumed to have a gradient structure in \cref{alg:gHMC:FD:1} and
generalize the use of some implicit integrators developed in
\cite{GirolamiCalderhead2011} culminating in \cref{alg:gHMC:FD:3}.
Furthermore \cref{prop:Ham:props:gen} and \cref{prop:HMC:Leap:Frog}
suggest possible variations on the usual setting of momentum flip
reversibility which may be of some use in future algorithmic
developments.

In \cref{sec:HMC:inf:dim} we turn to address the infinite-dimensional
Gaussian setting where our target measure has the form
\eqref{eq:gauss:ref:msr}.  Here we derive a quite general surrogate
trajectory method in \cref{alg:method}.  Note that this algorithm
includes $\infty$HMC, pCN, $\infty$MALA as well as geometric
variations from \cite{beskos2017geometric} as important special cases.
Our Hilbert space approach takes as its starting point the method
introduced in \cite{Beskosetal2011}.  In this Gaussian base measure
setting, the implied Hamiltonian from \cite{Beskosetal2011} has the
form $\Ham := \Ham_1 + \Ham_2$ with
\begin{align}\label{split:ham:intro}
  \Ham_1(\bq, \bv) := \frac{1}{2}|\cC^{-1/2} \bq|^2
               + \frac{1}{2}|\cC^{-1/2} \bv|^2,
  \qquad	
  \Ham_2(\bq, \bv) := \Phi(\bq),
\end{align}
The crucial insight in \cite{Beskosetal2011} is to use an appropriate
preconditioning operator $J$ along with a particular Strang splitting
defined around $\Ham_1$ and $\Ham_2$ to derive a numerical integrator
of the form
$\hat{S} = (\Xi^{(1)}_{\delta/2} \circ \Xi_\delta^{(2)} \circ
\Xi^{(1)}_{\delta/2})^n$ for the algorithmic parameters $\delta > 0$,
the size of the numerical time step, and $n = T/\delta$ where $T >0$
is the total integration time.  Here, for any $t > 0$,
\begin{align}
   \Xi_t^{(1)}(\bq, \bv) = (\bq, \bv - t \cC D \Phi(\bq)),
	\quad   
  \Xi_t^{(2)}(\bq, \bv) = (\cos(t)\bq + \sin(t) \bv, - \sin(t) \bq + \cos(t)\bv).
  \label{eq:intro:inf:int}
\end{align}
This delicate choice of splitting and preconditioning results, in the
language of our framework in \cref{thm:gen:rev:new}, in an involution
$S = R \circ \hat{S}$, where again $R$ is the momentum flip operation,
such that $S^* (\mu \otimes \mu_0)$ is absolutely continuous with
respect to $\mu \otimes \mu_0$ or, in more heuristic but concrete
terms, in a ``cancellation of infinities" as would appear in
\eqref{eq:hmc:ar}.

Our generalization of \cite{Beskosetal2011} to \cref{alg:method}
centers on the observation that we can replace the term $D\Phi$ in
\eqref{eq:intro:inf:int} with essentially any reasonable function
$\agp: X \to X$ and replace the velocity proposal kernel
$\Vker = \mu_0$ suggested by \eqref{split:ham:intro} with any $\Vker$
such that $\Vker(\bq,d \bv)$ is absolutely continuous with respect to
$\mu_0(d \bv)$ for any $\bq$.  Indeed, in this more general setting, we
still obtain an involution $S$ which maintains the absolute continuity
of $S^* \cM$ with respect to
$\cM(d\bq,d\bv) \propto e^{-\Phi(\bq)}\mu_0(d\bq) \Vker(\bq,d\bv)$
required by \cref{thm:gen:rev:new}. As already alluded to above, the
proof of invariance of the target $\mu$ in \cite{Beskosetal2011}
follows an involved spectral approximation approach analogous to
\cite{bourgain1994periodic}.  Here in \cref{thm:rev:HMC} we proceed
similarly to \cite{beskos2013advanced} and provide a different and
more direct proof based on the Cameron-Martin theorem which reduces
the problem to computing
$d S^* (\mu_0 \otimes \mu_0)/d (\mu_0 \otimes \mu_0)$ and making use
of a simple identity for iterated pushforward maps, given as
\eqref{RN:push:comp} below.  Indeed, our involution $S$ involves
repeated applications of the maps $\Xi_t^{(1)}$, $\Xi_t^{(2)}$; the
latter map $\Xi_t^{(2)}$ holds $\mu_0 \otimes \mu_0$ invariant, while
the Radon-Nikodym derivative associated with the pushforward of
$\mu_0 \otimes \mu_0$ by $\Xi_t^{(1)}$ can be computed via the
Cameron-Martin Theorem.

\medskip
\noindent{\bf An additional note on references:} During final
preparation of the initial draft of this manuscript, the coauthors
attended a seminar \cite{sanzserna2020haussdorf} wherein an as yet
unpublished result due to C. Andrieu was described that appeared
similar in some ways to \cref{thm:gen:rev:new}.  Since our initial
submission, the work \cite{andrieu2020general} has now appeared
publicly.  This contribution includes material, discovered completely
independently by Andrieu and his co-authors, which closely parallels
some of the ideas considered here including \cref{thm:gen:rev:new}.

\section{General Formulation of the Accept-Reject Mechanism}
\label{sec:main:result}

This section is devoted to the main result of the paper which we
present as \cref{thm:gen:rev:new} in \cref{subsec:main:res}.  Before
stating and proving this main result we briefly recall some measure
theoretic terminologies and facts in \cref{subsec:prelim:meas}. For
more details regarding this preliminary material, we refer the reader
to e.g. \cite{Bogachev2007,Folland1999,Aliprantis2013}.

\subsection{Preliminaries on Measure Theory}
\label{subsec:prelim:meas}

Let $(\gX, \Sigma_\gX)$ and $(\gY, \Sigma_\gY)$ be measurable
spaces. Given a measurable function $\fm: \gX \to \gY$ and a measure
$\gM$ on $(\gX, \Sigma_\gX)$, the \emph{pushforward of $\gM$ by $\fm$},
denoted as $\fm^* \gM$, is defined as the measure on $\gY$ given by
\begin{align*}
  \fm^* \gM (A) \coloneqq \gM(\fm^{-1}(A))
  \quad \mbox{ for any } A \in \Sigma_\gY.
\end{align*}
Clearly, for any measurable functions $\fm_1, \fm_2$ mapping between
appropriate spaces, we have that
\begin{align}
  (\fm_1 \circ \fm_2)^* \gM = \fm_1^*(\fm_2^*\gM).
  \label{eq:pushf:comp}
\end{align}
We recall moreover that, given a $(\fm^*\gM)$-integrable function
$\gm: \gY \to \RR$, i.e.  $\gm \in L^1(\fm^* \gM)$, it follows that the
composition $\gm \circ \fm : \gX \to \RR$ is in $L^1(\gM)$ and the
following change of variables formula holds
\begin{align}
  \label{eq:CoV:push}
  \int_\gY \gm(\bw) \fm^* \gM(d \bw)
       = \int_\gX \gm (\fm (\bw)) \gM(d \bw).
\end{align}

Let us observe that if $\bar{\bw}$ is a random variable sampled from a
probability measure $\gM$ then $\fm(\bar{\bw})$ is distributed as
$\fm^* \gM$.  Furthermore it is worth noticing that, in the special
case when $\gX = \RR^N$ and $\gM$ is any Borel measure on $\RR^N$
which is absolutely continuous with respect to Lebesgue measure,
namely when
\begin{align*}
  \gM(d \bw) = p(\bw) d \bw
\end{align*}
for some density function $p: \RR^N \to \RR$, then for any
diffeomorphism $\fm : \RR^N \to \RR^N$, we have
\begin{align}
  \fm^* \gM(d \bw) = p(\fm^{-1}(\bw)) |\det \nabla \fm^{-1} (\bw)| d \bw.
  \label{eq:push:forward:den}
\end{align}

Next recall that, given measures $\gM$ and $\gN$ on a measurable space
$(\gX, \Sigma_\gX)$, we say that $\gM$ is \emph{absolutely
  continuous with respect to $\gN$}, and write $\gM \ll \gN$, if
$\gM(A) = 0$ whenever $\gN(A) = 0$, for $A \in \Sigma_\gX$. If $\gM$ and
$\gN$ are two sigma-finite measures on $(\gX, \Sigma_\gX)$ such that
$\gM \ll \gN$ then there exists a $\gN$-almost unique function
$d\gM/d\gN \in L^1(\gN)$ such that
\begin{align}\label{eq:RN}
  \gM(A) = \int_A \frac{d\gM}{d\gN}(\bw) \gN(d \bw),
  \quad A \in \Sigma_\gX,
\end{align}
called the \emph{Radon-Nikodym derivative} of $\gM$ with respect to
$\gN$.  In particular, if $\gM_1$, $\gM_2$ and $\gN$ are sigma-finite
measures on $(\gX, \Sigma_\gX)$ with $\gM_1 \ll \gN$ and $\gM_2 \ll \gN$,
namely
\begin{align*}
    \gM_1(d\bw) = \fm_1(\bw) \gN(d\bw), \quad \gM_2(d \bw) = \fm_2(\bw)\gN(d\bw),
\end{align*}
where $\fm_1 = d\gM_1/d\gN$ and $\fm_2 = d\gM_2/d\gN$, and if $\fm_2 > 0$
$\gN$-a.e. then $\gM_1 \ll \gM_2$ and
 \begin{align}
  \frac{d \gM_1}{d \gM_2}(\bw) = \frac{\fm_1(\bw)}{\fm_2(\bw)}
  \quad \mbox{ for $\gN$-a.e. } \bw \in \gX.
  \label{eq:RN:Der:form}
\end{align}
It also immediately follows from \eqref{eq:RN} that, given
sigma-finite measures $\gM$, $\gN$ and $\gamma$ on
$(\gX, \Sigma_\gX)$ such that $\gM \ll \gN$ and $\gN \ll \gamma$,
then $\gM \ll \gamma$ and
\begin{align}\label{chain:rule:meas}
  \frac{d\gM}{d\gamma}(\bw)
  = \frac{d\gM}{d\gN}(\bw) \frac{d\gN}{d\gamma} (\bw)
  \quad \mbox{ for $\gamma$-a.e. } \bw \in \gX. 
\end{align}
Moreover, given a measurable and invertible mapping
$\phi: \gX \to \gX$ with measurable inverse
$\phi^{-1}: \gX \to \gX$\footnote{Here it is worth pointing out that
  when $\gX$ is a Polish space then, for any measurable and
  invertible mapping $\phi: \gX \to \gX$, its inverse $\phi^{-1}$ is
  always a measurable mapping, see e.g. \cite[Theorem
  12.29]{Aliprantis2013}}, and sigma-finite measures $\gM$ and $\gN$
on $(\gX, \Sigma_\gX)$ with $\gM \ll \gN$, it follows that
$\phi^*\gM \ll \phi^*\gN$ and
\begin{align}\label{RN:pushfwd}
	\frac{d \phi^*\gM}{d \phi^* \gN}(\bw) =
  \frac{d\gM}{d\gN}(\phi^{-1}(\bw))
  \quad \mbox{ for ($\phi^*\gN$)-a.e } \bw \in \gX.
\end{align}
This follows by noticing that, for any ($\phi^*\gM$)-integrable
function $\psi: \gX \to \RR$, we have from \eqref{eq:CoV:push} and
\eqref{eq:RN} that
\begin{align*}
  \int_\gX \psi(\bw) \phi^*\gM(d\bw)
  = \int_\gX \psi(\phi(\bw)) \gM(d\bw)
  = \int_\gX \psi(\phi(\bw)) \frac{d\gM}{d\gN}(\bw) \gN(d\bw)
  = \int_\gX \psi(\bw) \frac{d\gM}{d\gN}(\phi^{-1}(\bw)) \phi^*\gN(d\bw).
\end{align*}

Finally we observe that, given a sigma-finite measure $\gM$ on
$(\gX, \Sigma_\gX)$, and a sequence $\fm_i: \gX \to \gX$,
$i = 1, \ldots, n$, of measurable and invertible functions with
measurable inverses $\fm_i^{-1}: \gX \to \gX$ and such that
$\fm_i^* \gM \ll \gM$ for each $i = 1, \ldots, n$, then
$(\fm_n \circ \cdots \circ \fm_1)^* \gM \ll \gM$ and
\begin{align}\label{RN:push:comp}
  \frac{d (\fm_n \circ \cdots \circ \fm_1)^*\gM}{d \gM} (\bw)
  = \frac{d\fm_n^* \gM}{d\gM}(\bw) \prod_{i=1}^{n-1}
  \frac{d \fm_i^* \gM}{d \gM}((\fm_n\circ \cdots \circ \fm_{i+1})^{-1}(\bw))
  \quad \mbox{ for $\gM$-a.e. } \bw \in \gX,
\end{align}
To derive this identity, notice that for every
$((\fm_n \circ \ldots \circ \fm_1)^*\gM)$-integrable function
$\psi: \gX \to \RR$ we have
 \begin{align*}
   &\int \psi(\bw) \left( \fm_n \circ \dots \circ \fm_2 \circ \fm_1\right)^{*} \gM(d\bw)\\
   &=
     \int \psi(\fm_n \circ \dots \circ \fm_2 (\bw)) \fm_1^{*}\gM(d\bw)
     =
     \int \psi(\fm_n \circ \dots \circ \fm_2 (\bw)) 
        \frac{d\fm_1^{*}\gM}{d\gM}(\bw)\gM(d\bw) 
   \\
   &=
     \int \psi(\fm_n \circ \dots \circ \fm_3(\bw)) 
     \frac{d\fm_1^{*}\gM}{d\gM}\left(\fm_2^{-1}(\bw)\right)\fm_2^{*}\gM(d\bw) 
   =
   \int \psi(\fm_n \circ \dots \circ \fm_3(\bw)) 
   \frac{d\fm_1^{*}\gM}{d\gM}\left(\fm_2^{-1}(\bw)\right)
   \frac{d\fm_2^{*}\gM}{d\gM}\left(\bw\right)\gM(d\bw) 
   \\&=
   \int \psi(\fm_n \circ \dots \circ \fm_4(\bw)) 
   \frac{d\fm_1^{*}(\gM)}{d\gM}\left(\fm_2^{-1}\circ \fm_3^{-1}(\bw)\right)
   \frac{d\fm_2^{*}\gM}{d\gM}\left(\fm_3^{-1}(\bw)\right)\fm_3^{*}\gM(d\bw) 
   \\&= \cdots = \int \psi(\bw) \left(\prod_{i=1}^{n-1}
   \frac{d \fm_i^* \gM}{d \gM}((\fm_n\circ \cdots \circ \fm_{i+1})^{-1}(\bw)) \right) 
   \frac{d \fm_n^*\gM}{d\gM}(\bw) \gM(d \bw).
\end{align*}

\subsection{The Main Result}\label{subsec:main:res}

As alluded to above in the introduction, our main theorem shows how to
define an acceptance probability that yields a reversible sampling
algorithm when the proposal kernel is given in terms of an involution
$S$ defined on an extended parameter space, namely $\spq \times \spv$ 
for measurable spaces $(\spq, \Sigma_\spq)$ and $(\spv, \Sigma_\spv)$. More
specifically, denoting by $\Pi_1: \spq \times \spv \to \spq$ the projection
mapping onto the first component, i.e.
\begin{align}
  \label{eq:proj:map:def}
  \Pi_1 (\bq,\bv) = \bq \quad \text{ for all } (\bq, \bv) \in \spq \times \spv,
\end{align}
we consider the case of proposal kernels given as
$Q(\bq, d \btq) = (\Pi_1 \circ S(\bq, \cdot))^*\Vker(\bq,\cdot) (d \btq)$, 
for some Markov kernel $\Vker: \spq \times \Sigma_\spv \to [0,1]$.

To avoid dealing with further technical measure theoretical details,
in the statement below and throughout the manuscript we apply the
general results from \cref{subsec:prelim:meas} concerning sigma-finite
measures in the particular context of probability measures.

\begin{Theorem}\label{thm:gen:rev:new}
  Let $(\spq, \Sigma_\spq)$ and $(\spv, \Sigma_\spv)$ be measurable
  spaces. Let $\mu$ be a probability measure on $\spq$, and let
  $\Vker: \spq \times \Sigma_\spv \to [0,1]$ be a Markov kernel. Let
  $\cM$ be the probability measure on $\spq \times \spv$ defined as
  \begin{align}
    \label{def:ext:target}
    \cM(d \bq, d\bv) = \Vker(\bq, d \bv) \mu(d\bq).
  \end{align}
  Suppose there exists a measurable mapping
  $S: \spq \times \spv \to \spq \times \spv$ satisfying the following
  properties:
    \begin{enumerate}[label={(P\arabic*)}]
        \item\label{P1c} $S$ is an involution, i.e. $S^2 = I$;
        \item\label{P2c} $S^* \cM$ is absolutely continuous with respect to $\cM$.
    \end{enumerate}
 Let $\harFn: \spq \times \spv \to \RR$ be the function defined by
 \begin{align}
   \label{def:arFn:b}
   \harFn(\bq, \bv) :=
   1 \wedge \frac{d S^*\cM}{d \cM} (\bq,\bv), \quad (\bq, \bv) \in \spq \times \spv, 
 \end{align}
 and let $P: \spq \times \Sigma_\spq \to [0,1]$ be the Markov
 transition kernel defined as
 \begin{align}
  \label{eq:ext:MH:ker:diff}
  P(\bq, d\btq)
  = \int_\spv \harFn(\bq, \bv)
       \delta_{\Projq \circ S(\bq, \bv)} (d\btq) \Vker (\bq, d\bv)
       + \delta_{\bq}(d\btq) \int_\spv (1 - \harFn(\bq, \bv)) \Vker (\bq, d\bv),
 \end{align}
 for $\bq \in X$.
 Then, $P$ satisfies \textit{detailed balance} with respect to $\mu$, i.e.
 \begin{align}
   P(\bq, d \btq) \mu(d\bq) = P(\btq, d\bq) \mu(d\btq),
   \label{eq:rev:mu:P:def}
 \end{align}
 so that, in particular, $\mu$ is $P$ invariant.
\end{Theorem}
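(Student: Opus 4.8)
My plan is to verify the detailed balance identity \eqref{eq:rev:mu:P:def} in its integrated (weak) form: it suffices to show that for every bounded measurable $g:\spq\times\spq\to\RR$,
\[
\int_\spq\int_\spq g(\bq,\btq)\,P(\bq,d\btq)\,\mu(d\bq) = \int_\spq\int_\spq g(\btq,\bq)\,P(\bq,d\btq)\,\mu(d\bq),
\]
since the right-hand side of \eqref{eq:rev:mu:P:def} is exactly the pushforward of the left-hand side under the coordinate swap $(\bq,\btq)\mapsto(\btq,\bq)$. Substituting the explicit expression \eqref{eq:ext:MH:ker:diff} for $P$ decomposes each side into an ``accept'' and a ``reject'' part. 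The reject part is supported on the diagonal $\{\bq=\btq\}$, so it only evaluates $g$ at points $(\bq,\bq)$ and is therefore automatically invariant under the swap; it thus contributes identically to both sides and may be discarded. Using the definition \eqref{def:ext:target} of $\cM$, the accept part of the left-hand side is
\[
\int_{\spq\times\spv} g\bigl(\bq,\,\Projq\circ S(\bq,\bv)\bigr)\,\harFn(\bq,\bv)\,\cM(d\bq,d\bv),
\]
and the whole statement reduces to showing that this integral is unchanged when the integrand $g(\bq,\Projq\circ S(\bq,\bv))$ is replaced by $g(\Projq\circ S(\bq,\bv),\bq)$.

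The core of the argument is a symmetry identity for $\harFn$. Write $r:=dS^*\cM/d\cM$ for the Radon--Nikodym derivative provided by \ref{P2c}. First I would upgrade the one-sided hypothesis $S^*\cM\ll\cM$ to mutual absolute continuity: pushing forward by $S$ and using \eqref{eq:pushf:comp} together with the involution property \ref{P1c} gives $S^*(S^*\cM)=(S\circ S)^*\cM=\cM$, whence $\cM\ll S^*\cM$. The pushforward formula \eqref{RN:pushfwd}, applied with $\phi=S=S^{-1}$ (so that no separate appeal to measurability of the inverse is needed), then yields $d\cM/dS^*\cM(\bq,\bv)=r(S(\bq,\bv))$, and the chain rule \eqref{chain:rule:meas} forces $r(\bq,\bv)\,r(S(\bq,\bv))=1$ for $\cM$-a.e.\ $(\bq,\bv)$; in particular $r>0$ almost everywhere. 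From \eqref{def:arFn:b} this gives the key relation
\[
\harFn(\bq,\bv)=1\wedge r(\bq,\bv)=r(\bq,\bv)\bigl(1\wedge r(S(\bq,\bv))\bigr)=r(\bq,\bv)\,\harFn(S(\bq,\bv)).
\]

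Finally I would use this identity to transform the accept integral. Since $r\,d\cM=dS^*\cM$, replacing $\harFn(\bq,\bv)$ by $r(\bq,\bv)\,\harFn(S(\bq,\bv))$ turns the accept integral into $\int g(\bq,\Projq\circ S(\bq,\bv))\,\harFn(S(\bq,\bv))\,S^*\cM(d\bq,d\bv)$. Applying the change-of-variables formula \eqref{eq:CoV:push} for the pushforward by $S$ composes the integrand with $S$, and then the involution property \ref{P1c} simplifies $S\circ S=I$: the factor $\harFn(S(S(\bq,\bv)))$ collapses to $\harFn(\bq,\bv)$, while $g(\Projq(S(\bq,\bv)),\Projq\circ S(S(\bq,\bv)))$ becomes $g(\Projq\circ S(\bq,\bv),\bq)$. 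This is precisely the swapped accept integral, so detailed balance holds; invariance of $\mu$ then follows at once by integrating \eqref{eq:rev:mu:P:def} over $\bq$ and using that $P(\btq,\cdot)$ is a probability measure. I expect the main obstacle to be the measure-theoretic bookkeeping behind the identity $r(\bq,\bv)\,r(S(\bq,\bv))=1$ --- namely establishing mutual absolute continuity and positivity of $r$ so that the algebraic step $r\cdot(1\wedge r^{-1})=1\wedge r$ is justified $\cM$-almost everywhere; once this identity is secured, the remainder is a routine change of variables.
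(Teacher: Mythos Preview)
Your proposal is correct and follows essentially the same route as the paper's proof: both reduce detailed balance to its weak form, discard the symmetric rejection term, and establish the accept-integral symmetry via the key identity $\harFn(\bq,\bv)=r(\bq,\bv)\,\harFn(S(\bq,\bv))$, which is derived from $r(\bq,\bv)\,r(S(\bq,\bv))=1$ and then combined with the change-of-variables formula for $S^*\cM$. The only cosmetic difference is ordering: the paper first applies $S^2=I$ and the change of variables to term $(I)$ and then verifies the $\harFn$-identity \eqref{eq:RN:a0}, whereas you establish the identity first (after upgrading to mutual absolute continuity, which the paper relegates to a remark) and then transform the integral.
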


Before presenting the proof, we try to provide some intuition. Under
the assumptions, if $S$ maps $(\bq,\bv)$ to $(\btq,\btv)$, then $S$
will map $(\btq,\btv)$ back to $(\bq,\bv)$. That is, if a proposal
$\btq$ can be generated from the Markov kernel starting from $\bq$,
then a proposal $\bq$ can similarly be generated starting from
$\btq$. By appropriately selecting $\arFn$ to balance these two flows
-- from $(\bq,\bv)$ to $S(\bq,\bv)$ and vice versa -- we can achieve
detailed balance and therefore invariance. We now turn to the
details.

\begin{proof}
  It suffices to show that for every bounded measurable function
  $\varphi: \spq \times \spq \to \RR$ we have
 \begin{align}
 \label{eq:testfc:rev:b}   
   \int_\spq \int_\spq \varphi(\bq, \btq) P(\bq, d \btq) \mu(d\bq)
   = \int_\spq \int_\spq \varphi(\bq, \btq) P(\btq, d \bq) \mu(d\btq).
 \end{align}
 From the definitions of $P$ and $\cM$ in \eqref{eq:ext:MH:ker:diff},
 \eqref{def:ext:target}, and Fubini it follows that
\begin{align*}
\int_\spq \int_\spq \varphi(\bq, &\btq) P(\bq, d \btq) \mu (d\bq) \\
 &= \int_\spq \int_\spv \varphi(\bq, \Projq \circ S(\bq, \bv))
                             \harFn(\bq, \bv) \cM(d\bq, d \bv)
   + \int_\spq \int_\spv \varphi(\bq, \bq) (1 - \harFn(\bq, \bv))
             \cM(d\bq, d \bv) \\
&=: (I) + (II).
\end{align*}
Analogously, for the right-hand side of \eqref{eq:testfc:rev:b} we have
\begin{align*}
\int_\spq \int_\spq \varphi(\bq, &\btq) P(\btq, d \bq) \mu(d\btq) 
\\
&= 
\int_\spq \int_\spv \varphi(\Projq \circ S(\btq, \bv), \btq) \harFn(\btq, \bv) \cM(d\btq, d \bv)
+ \int_\spq \int_\spv \varphi(\btq, \btq) (1 - \harFn(\btq, \bv)) \cM(d\btq, d \bv)
\\
&=: (III) + (IV).
\end{align*}
Clearly, $(II) = (IV)$. We now show that $(I) = (III)$. 

Invoking assumption \ref{P1c}, we obtain
\begin{align*}
	(I) &=
              \int_\spq \int_\spv \varphi(\bq, \Pi_1 \circ S (\bq, \bv))
                       \, \harFn(\bq, \bv) \, \cM(d\bq, d \bv)\\
	&=
   \int_\spq \int_\spv \varphi(\Pi_1 \circ S^2 (\bq, \bv),
                        \Pi_1 \circ S (\bq, \bv) )
               \,  \harFn(S^2 (\bq, \bv)) \, \cM(d\bq, d \bv).
\end{align*}
Thus, by Fubini, the change of variables formula, \eqref{eq:CoV:push},
and then invoking assumption \ref{P2c}, we write
\begin{align}
  (I) &=
        \int_{\spq \times \spv} \varphi(\Pi_1 \circ S(\bq, \bv), \bq)
        \, \harFn(S(\bq, \bv)) \, S^* \cM(d \bq, d\bv)
  \notag\\
      &= \int_{\spq \times \spv} \varphi(\Pi_1  \circ S(\bq, \bv), \bq)
        \, \harFn(S(\bq, \bv)) \, \frac{d S^* \cM}{ d \cM} (\bq, \bv)
        \,  \cM (d \bq, d\bv).
        \label{eq:I}
\end{align}
Hence, in order to conclude that $(I) = (III)$, it suffices to show that 
\begin{align}\label{eq:RN:a0}
\harFn(S (\bq, \bv)) \, \frac{d S^*\cM}{d \cM} (\bq, \bv)
= 
\harFn(\bq, \bv)
\quad \mbox{ for  $\cM$-a.e. }  (\bq, \bv) \in \spq \times \spv. 
\end{align}
Since $d S^*\cM / d \cM (\bq, \bv) \geq 0$
$\cM$-a.e., from the definition of $\harFn$ in
\eqref{def:arFn:b} it follows that
\begin{align}\label{eq:RN:a}
	\harFn(S (\bq, \bv)) \, \frac{d S^*\cM}{d \cM} (\bq, \bv)
	&= 
	\frac{d S^*\cM}{d \cM} (\bq, \bv)
	\wedge
	\left( \frac{d S^*\cM}{d \cM} (\bq, \bv)
	\frac{d S^*\cM}{d \cM} (S (\bq, \bv))
	\right) 
\end{align}
for $\cM$-a.e. $(\bq, \bv) \in \spq \times \spv$. Now from
\eqref{RN:push:comp} and assumption \ref{P1c} we have that
\begin{align}\label{eq:RN:S:new}
     \frac{d S^*\cM}{d \cM} (\bq, \bv)
	\frac{d S^*\cM}{d \cM} (S (\bq, \bv))
	=
	\frac{d (S^2)^*\cM}{d \cM} (\bq, \bv) 
	= 1
	\quad \mbox{ for  $\cM$-a.e. }  (\bq, \bv) \in \spq \times \spv. 
\end{align}
Plugging \eqref{eq:RN:S:new} into \eqref{eq:RN:a}, we obtain
\eqref{eq:RN:a0}. This concludes the proof.
\end{proof}

The MCMC sampling scheme resulting from \cref{thm:gen:rev:new} is
summarized in \cref{alg:method:abs}.

\begin{algorithm}[H]
\caption{}
\begin{algorithmic}[1]\label{alg:method:abs}
    \State Choose $\bq_0 \in X$
    \For{$k \geq 0$}
        \State Sample $\bv_k \sim \Vker(\bq_0, \cdot)$
        \State Propose $\lbq_{k+1} = \Projq \circ S(\bq_k,\bv_k)$
        \State Set $\bq_{k+1} =\lbq_{k+1}$ with probability
                   $\harFn(\bq_{k}, \bv_{k})$ given by \eqref{def:arFn:b},
                   otherwise $\bq_{k+1} = \bq_{k}$ 
    \EndFor
\end{algorithmic}
\end{algorithm}

We conclude this section with a number of remarks clarifying the
scope of \cref{thm:gen:rev:new}.

\begin{Remark}
  If in \cref{thm:gen:rev:new} we assumed in addition that $\spq$ and
  $\spv$ are Radon spaces (i.e. separable metric spaces on which every
  probability measure is tight), then we could take $\cM$ to be any
  probability measure on $\spq \times \spv$ with first marginal $\mu$,
  i.e. $\Pi_1^* \cM = \mu$. Indeed, in this case it follows from the
  disintegration theorem that there exists a Markov kernel
  $\Vker: \spq \times \Sigma_Y \to [0,1]$ such that $\cM$ is written
  as in \eqref{def:ext:target}, see e.g. \cite[Theorem
  5.3.1]{AGL2008}.
\end{Remark}

\begin{Remark}
  Assumptions \ref{P1c} and \ref{P2c} in \cref{thm:gen:rev:new} imply
  that the measures $S^*\cM$ and $\cM$ are in fact mutually absolutely
  continuous. Indeed, if $E \subset \spq \times \spv$ is such that
  $S^*\cM (E) = 0$, then by definition of pushforwards we have
  $\cM(S^{-1}(E)) = 0$. Thus assumption \ref{P2c} implies that
  $S^* \cM(S^{-1}(E)) = 0$, and with \ref{P1c} we obtain
 \[
    0 = S^* \cM(S^{-1}(E)) = 
    \cM(S^{-1}(S^{-1}(E))) = \cM(S^2(E)) = \cM(E).
 \]
 Hence, $\cM(E) = 0$, so that $\cM \ll S^*\cM$.
\end{Remark}

\begin{Remark}[Generalizations of \cref{thm:gen:rev:new}]
  \label{rem:noabscont} We notice that the statement of
  \cref{thm:gen:rev:new} in fact holds under a more general
  form. Namely, similarly as in \cite{Tierney1998}, we could disregard
  assumption \ref{P2c} and define the acceptance probability $\harFn$
  in \eqref{def:arFn:b} as
  $\harFn(\bq, \bv) = 1 \wedge \frac{d S^*\cM}{d\cM}(\bq, \bv)$ for
  all $(\bq, \bv)$ in a measurable subset
  $\mathcal{O} \subset \spq \times \spv$ where
  $S^*\cM_{\mathcal{O}} \ll \cM_{\mathcal{O}}$, and $0$
  otherwise, where here $\Gamma_{\mathcal{O}}$ denotes the restriction of a measure $\Gamma$ to the set $\mathcal{O}$. However, it is worth pointing out that in practice this
  set $\mathcal{O}$ could be empty, in which case $\harFn \equiv 0$
  and the corresponding MCMC algorithm would always reject the
  proposals, a clearly undesirable behavior.

  Moreover, instead of defining $\harFn$ explicitly as in
  \eqref{def:arFn:b}, we could have taken $\harFn$ to be any
  measurable function for which \eqref{eq:RN:a0} holds (particularly
  on the set $\mathcal{O}$, under the setting of the previous
  paragraph). Indeed, \eqref{def:arFn:b} gives just one such
  example. It is worth pointing however that, as noticed in
  \cite[Section 3]{Tierney1998}, the standard choice given in
  \eqref{def:arFn:b} yields the maximum acceptance probability among
  all possible choices of $\harFn$ satisfying \eqref{eq:RN:a0}, since
\[
\harFn(S (\bq, \bv)) \, \frac{d S^*\cM}{d \cM} (\bq, \bv)
= 
\harFn(\bq, \bv) 
\leq 1 \wedge  \frac{d S^*\cM}{d \cM} (\bq, \bv),
\quad (\bq, \bv) \in \spq \times \spv.
\]

Finally, assumption \ref{P1c} can be replaced by the following more
general condition:
\begin{enumerate}[label={($P$\arabic*')}]
\item\label{P1p} $(S^2)^*\cM = \cM$, $\Pi_1 \circ S^2 = \Pi_1$, and
  $S$ is invertible with measurable inverse
  $S^{-1}: \spq \times \spv \to \spq \times \spv$.
\end{enumerate}
Indeed, we arrive at the same identity \eqref{eq:I} by noticing that
\begin{align*}
  (I) &= \int_\spq \int_\spv \varphi(\bq, \Pi_1 \circ S (\bq, \bv)) 
            \harFn(\bq, \bv) \cM(d\bq, d \bv)
      =
        \int_\spq \int_\spv \varphi(\bq, \Pi_1 \circ S (\bq, \bv)) 
              \harFn(\bq, \bv) (S^2)^*\cM(d\bq, d \bv) \\
      &=
        \int_\spq \int_\spv \varphi(\Pi_1 \circ S (\bq, \bv), \Pi_1 \circ S^2 (\bq, \bv))  
              \harFn(S(\bq, \bv)) S^* \cM(d\bq, d \bv) \\
      &= 
        \int_\spq \int_\spv \varphi(\Pi_1 \circ S(\bq, \bv), \bq) 
           \harFn(S(\bq, \bv)) S^*\cM(d\bq, d \bv).
\end{align*}
Moreover the conclusion in \eqref{eq:RN:S:new} still holds since with
the assumption $(S^2)^*\cM = \cM$ it follows that
$S^*\cM = (S^{-1})^*(S^2)^*\cM = (S^{-1})^* \cM$. Therefore, invoking
\eqref{RN:push:comp}, we obtain
\begin{align*}
	\frac{d S^* \cM}{d \cM}(\bq, \bv) \frac{d S^* \cM}{d \cM} (S(\bq, \bv)) 
	=
	\frac{d (S^{-1})^* \cM}{d \cM}(\bq, \bv) \frac{d (S^{-1})^* \cM}{d \cM}(S(\bq, \bv)) 
	=
	\frac{d ((S^{-1})^2)^*\cM}{d \cM}(\bq, \bv) = 1 
\end{align*}
for $\cM$-a.e. $(\bq, \bv )\in \spq \times \spv$, where the last
identity holds since
$((S^{-1})^2)^*\cM = ((S^{-1})^2)^*(S^2)^*\cM = \cM$.

Note that, all of the applications of \cref{thm:gen:rev:new} developed in
\cref{sec:approx:ham:methods}, \cref{sec:class:ex} below fall within
the particular involutive structure of \cref{thm:gen:rev:new}, \ref{P1c}. It
would thus be interesting to identify an example of MCMC algorithm
that would require the more general framework from \ref{P1p}.
\end{Remark}

\begin{Remark}\label{rmk:R:hatS}
  For many MCMC algorithms, the associated proposal kernel is defined
  from a mapping $S$ as in \cref{thm:gen:rev:new} that is in turn
  given in terms of a numerical integrator of a suitably chosen
  dynamics. Denoting such an integrator by a measurable mapping
  $\Sol: \spq \times \spv \to \spq \times \spv$, one commonly seeks a
  corresponding measurable mapping
  $R: \spq \times \spv \to \spq \times \spv$ such that
\begin{align}\label{R:Sol:rev}
 R \circ \Sol = \Sol^{-1} \circ R.
\end{align}
In the context of vector spaces $\spq, \spv$, the condition
\eqref{R:Sol:rev} is referred to in the theory of geometric
integrators as the \emph{reversibility of $\Sol$ with respect to $R$},
and $R$ is often taken as a linear invertible mapping (see
e.g. \cite[Definition V.1.2]{Hairer2006book}). Frequently, one can
find $R$ commensurate with \eqref{R:Sol:rev} which is also an
involution, i.e. $R^2 = I$. For example, in the case of the HMC
algorithm, where $\Sol$ represents a numerical integrator of a
suitable Hamiltonian dynamics, $R$ is commonly taken as the
``momentum-flip'' involution $R(\bq, \bv) = (\bq, - \bv)$, see
\cref{sec:approx:ham:methods} and \cref{subsubsec:fd:HMC},
\cref{subsubsec:pHMC} below.

We thus define $S \coloneqq R \circ \Sol$, and conclude from
\eqref{R:Sol:rev} and $R^2 = I$ that $S^2 = I$, i.e. $S$ is an
involution, so that assumption \ref{P1c} of \cref{thm:gen:rev:new} is
satisfied. In fact, if instead of $R^2 = I$ we assume more generally
that $(R^2)^*\cM = \cM$, $\Pi_1 \circ R^2 = \Pi_1$, and $R$ is
invertible with measurable inverse, then clearly \eqref{R:Sol:rev}
implies that $S^2 = (R \circ \Sol)^2 = R^2$, so that $S$ satisfies the
more general condition \ref{P1p} in \cref{rem:noabscont}. Here again
it would be interesting to identify an example of MCMC algorithm for
which this more general setting for $R$ is required.
\end{Remark}

\begin{Remark}[Connection with the Metropolis-Hastings-Green Algorithm]
\label{rmk:alg:Rn:Preliminary}
As already mentioned in the introduction, \cref{alg:method:abs} may be
seen as a generalization to abstract state spaces of the so called
Metropolis-Hastings-Green algorithm \cite{green1995reversible,
  geyer2003metropolis, geyer2011introduction}.  We can make this
connection explicit as follows.  Let $X=\RR^\fd$ and fix any
continuously distributed target probability measure
$\mu(d\bq) = p(\bq)d\bq$.  We consider any Markovian proposal kernel
$\Vker: \RR^N \times \mathcal{B}(\RR^M) \to [0,1]$ of the form
$\Vker(\bq, d \bv) = q(\bq, \bv) d \bv$ where
$q: \RR^{N \times M} \to [0,\infty)$ is such that
$\int_{\RR^M} q(\bq, \bv) d \bv =1$ for any $\bq \in \RR^N$.  Finally
select any $S: \RR^{N+M} \to \RR^{N + M}$ which is $C^1$ and is an
involution, namely we assume that $S \circ S(\bz) = \bz$ for every
$\bz \in \RR^{N + M}$.  Then, referring back to \eqref{def:arFn:b} and
recalling \eqref{eq:push:forward:den}, \eqref{eq:RN:Der:form}, we
obtain a Markov kernel $P$ of the form
\eqref{eq:ext:MH:ker:diff} with the acceptance probability given by
\begin{align}
      \harFn(\bq, \bv)
      = 1 \wedge \frac{\rho(S(\bq, \bv)) | \det \nabla S(\bq, \bv)|}{\rho(\bq, \bv)}
       \quad \text{ where } \rho(\bq, \bv) := p(\bq)q(\bq,\bv).
          \label{eq:Gen:HMC:AR}
\end{align}
Thus with these specifications for $\mu$, $\Vker$ and $S$ we see that
the algorithm derived in \cite{green1995reversible} (see also
\cite[Section 1.2]{geyer2003metropolis}) falls out as a special case
of \cref{alg:method:abs}.
    
\cite{green1995reversible} also considered the possibility of
combining proposals from a collection of different sampling mechanisms
where the proposal mechanism employed is selected at random according
to a state dependent probability.  This `multi-kernel' approach
developed in \cite{green1995reversible} can be generalized to an
abstract context similar to \cref{thm:gen:rev:new} in a fashion which
recovers the results in \cite{green1995reversible} as a special case.
To see this we proceed as follows.  Fix a measurable space
$(\spq, \Sigma_\spq)$ along with a collection of auxiliary measurable
spaces $(\spv_j, \Sigma_{\spv_j})$ defined for $j = 1, \ldots, L$. Our
target measure $\mu$ is any probability measure on $\spq$.  On each
extended phase space $X \times Y_j$ for $j = 1, \ldots, L$ we assume
that we have defined a Markov proposal kernel $\Vker_j(\bq, d \bv_j)$
and a measurable involution $S_j : X \times Y_j \to X \times Y_j$ such
that $S_j^* \cM_j$ is absolutely continuous with respect to $\cM_j$
where $\cM_j(d \bq, d\bv) = \Vker_j(\bq, d \bv) \mu(d\bq)$.  Finally
we suppose we have defined, for each $j = 1,\ldots, L$, a measurable function
$\kappa_j: X \to [0,1]$ in such a way that
$\sum_{j =1}^L \kappa_j (\bq) = 1$ for each $\bq \in X$.  With these
ingredients in hand we define
$P_j: \spq \times \Sigma_\spv \to [0,1]$, for $j = 1, \ldots L$ to be
the Markov transition kernels given as
 \begin{align*}
  P_j(\bq, d\btq)
  = \int_\spv \harFn_j(\bq, \bv_j)
       \delta_{\Projq \circ S_j(\bq, \bv_j)} (d\btq)  \Vker_j (\bq, d\bv_j)
       +  \delta_{\bq}(d\btq) \int_\spv (1 - \harFn_j(\bq, \bv_j)) \Vker_j (\bq, d\bv_j),
 \end{align*}
 with
 \begin{align*}
   \harFn_j(\bq, \bv) :=
   1 \wedge \left( \frac{\kappa_j (\Projq \circ S_j(\bq,\bv_j)) } {\kappa_j (\bq)} 
   \frac{d S^*_j \cM_j}{d  \cM_j} (\bq,\bv_j) \right), 
   \quad (\bq, \bv_j) \in \spq \times \spv_j.
 \end{align*}
 We now define a compound kernel $P: \spq \times \Sigma_\spq \to [0,1]$
 as $P(\bq, d\btq) = \sum_{j =1}^L \kappa_j(\bq)P_j(\bq, \tbq)$.
 Following the proof of \cref{thm:gen:rev:new} mutatis mutandis we obtain
 that $P$ is reversible with respect to $\mu$.  Moreover, in the special case when
 $\mu(d \bq) = p(\bq) d\bq$, $Y_j = \RR^{M_j}$, 
 $\Vker_j(\bq, d \bv_j) = q_j(\bq,  \bv_j)d \bv_j$, we recover the formulation
 in \cite{green1995reversible} as described in  
 \cite[Section 1.3]{geyer2003metropolis}.

Note that the connection between the Metropolis-Hasting-Green algorithm
and Hamiltonian Monte Carlo sampling, where the Hamiltonian dynamic
or more precisely a well chosen numerical discretization of the dynamics
provides an associated involution, seems to be more 
recent.  See  \cite{levy2017generalizing, neklyudov2020involutive}.
Indeed, below in \cref{sec:ghm:main} we show how the usage of
Hamiltonian dynamics can allow one to derive a broad class of
involutions $S$ and thus to recover numerous formulations of
Hamiltonian Monte Carlo (essentially) as special cases of the
general observation leading to \eqref{eq:Gen:HMC:AR}.
\end{Remark}

\section{Connection to the Tierney Framework}
\label{sec:tierney}

In this section, we connect our main result, \cref{thm:gen:rev:new},
to the formulation of reversible Metropolis-Hastings algorithms on
general state spaces $\cX$ laid out previously in \cite{Tierney1998}.
We show that the two frameworks connect or overlap in several
different and interesting ways.  On the one hand, in
\cref{subsec:inv:F} with \cref{thm:Tierney}, we prove that
\cref{thm:gen:rev:new} subsumes the main result in \cite{Tierney1998}.
In fact we demonstrate that this connection can often be made in multiple, 
non-equivalent ways; namely, in cases of interest,
a variety of different choices for $S$ and $\Vker$ in
\eqref{eq:ext:MH:ker:diff} can ultimately yield the same desired
Markov kernel $\Tker$ specified in Tierney's formulation.  In
particular note that \cref{thm:Tierney} is employed in
\cref{sec:class:ex} below to provide further insights for the derivation
of various classical MCMC algorithms.  On the other hand, in
\cref{subsec:alt:proof}, we establish that the main results in
\cite{Tierney1998} can be used to develop a proof of
\cref{thm:gen:rev:new} distinct from the one presented above in
\cref{sec:main:result}.

\subsection{Overview of Tierney's formulation}

Before turning to the main results in this section let us begin by
recalling some of the framework and notations from
\cite{Tierney1998}. As above in \cref{sec:main:result} we let
$(\cX, \Sigma_{\cX})$ be any measurable space. Starting with a target
probability measure $\mu$ on $\cX$ and a proposal Markov kernel
$\Pker: \cX \times \Sigma_{\cX} \to [0,1]$, \cite{Tierney1998}
considers the Metropolis-Hastings type Markov kernel defined as
\begin{align} \label{eq:kern:tierney}
  \Tker (\bq,d\btq) = \arFn(\bq,\btq)\Pker(\bq,d\btq)
           + \delta_\bq(d\btq) \int_\spq
           (1-\arFn(\bq,\br))\Pker(\bq,d\br),
           \quad \bq \in \cX,
\end{align}
with the acceptance ratio $\arFn$ given by
\begin{align}\label{eq:ar:tierney}
  \arFn(\bq,\btq) :=
  1 \wedge \frac{d\pdmsp}{d\pdms}(\bq,\btq),
  \quad \bq, \btq \in \cX.
\end{align}
Here
the measures $\pdms,\pdmsp$ are given by
\begin{align}\label{eq:meas:tierney}
  \pdms(d\bq,d\btq) = \mu(d\bq)Q(\bq,d\btq)
  \qquad \pdmsp(d\bq,d\btq) = \pdms(d\btq,d\bq)
     =\mu(d\btq)Q(\btq,d\bq)
\end{align}
and are assumed to be mutually absolutely
continuous, so that in particular $\arFn$ in \eqref{eq:ar:tierney} 
is well-defined.\footnote{In fact \cite{Tierney1998} allows for $\pdms,\pdmsp$ to
  be mutually absolutely continuous only on a subset
  $\mathcal{O} \subset \spq \times \spq$, in which case $\arFn$ is
  defined to be zero on $\mathcal{O}^c$. A similar generalization
  holds for \cref{thm:gen:rev:new}; see \cref{rem:noabscont}. But for
  simplicity of presentation we restrict ourselves to the case of
  mutual absolute continuity here.}
Tierney then shows, \cite[Theorem 2]{Tierney1998}, that \eqref{eq:kern:tierney},
 \eqref{eq:ar:tierney} yields a Markov kernel satisfying detailed balance  
with respect to $\mu$, namely, cf. \eqref{eq:rev:mu:P:def},
$\Tker (\bq,d\btq)\mu(d \bq) = \Tker (\btq,d\bq)\mu(d \btq)$.

As an important preliminary observation we notice that the formulation
in \cite{Tierney1998}, that is \eqref{eq:kern:tierney},
\eqref{eq:ar:tierney}, can be recovered from \cref{thm:gen:rev:new} in
a straightforward fashion as follows:
\begin{Remark}\label{rmk:flip:it:baby}
  Given the inputs $\Pker$, $\mu$ we take $\spq := \cX$, $\spv := \cX$
  and set
\begin{align} \label{eq:flip:it:baby}
  \Vker(\bq,d\bv) :=Q(\bq,d\bv),
  \qquad S(\bq,\bv) := (\bv,\bq).
\end{align}
Clearly $S$ is an involution and it is easy to see that, with these
choices (cf. \eqref{def:ext:target}), $\cM = \pdms$ and
$S^*\cM = \pdmsp$ so that $dS^*\cM/ d \cM = d\pdmsp/d\pdms$.
Therefore \eqref{eq:ext:MH:ker:diff} reduces to
\eqref{eq:kern:tierney} under this $\Vker$ and $S$.
\end{Remark}

\subsection{Reduction to the Tierney Formulation}
\label{subsec:inv:F}

The observation in \cref{rmk:flip:it:baby}, recovering
\eqref{eq:kern:tierney} from \eqref{eq:ext:MH:ker:diff}, can be extended
more broadly.  Here a starting point is to notice that we are in fact
writing the proposal kernel in \eqref{eq:kern:tierney} as
$\Pker(\bq, d \btq) = F(\bq,\cdot)^*\Vker(\bq, \cdot)(d \btq)$ but in
a trivial fashion where $\Pker = \Vker$ and $F(\bq,\bv) = \bv$. The
insight is that a desired $Q$, along with the involution $S$ required for
\eqref{eq:ext:MH:ker:diff} which recovers the kernel
\eqref{eq:kern:tierney}, can often be identified from other,
non-trivial, choices for $F$ and $\Vker$.

We formulate this generalization as follows. Let $(\spq, \Sigma_\spq)$
and $(\spv, \Sigma_\spv)$ be measurable spaces.  We consider proposals
$\btq \sim Q(\bq, d \btq)$ made from some starting point
$\bq \in \spq$ which are generated in the following fashion:
\begin{enumerate}
\item Draw $\bv \sim \Vker(\bq, \cdot)$, for some Markov kernel
  $\Vker: \spq \times \Sigma_\spv \to [0,1]$.
\item Compute $\btq = \Fmap(\bq,\bv)$ from a (measurable) deterministic map
  $\Fmap:\spq \times \spv \to \spq$.
\end{enumerate}
In other words we are considering the proposal kernel $Q$ in the
formulation
\begin{align}
  \Pker(\bq,d\btq)
  = \int_\spv \delta_{\Fmap(\bq,\bv)}(d\btq) \Vker(\bq,d\bv)
  = \Fmap(\bq, \cdot)^* \Vker(\bq, \cdot)(d \btq).
  \label{ass:kern:tierney}
\end{align}
We assume throughout what follows that, for a given sample $\bq \in \spq$
and proposal $\btq \in \spq$, $\Fmap$ can be inverted to obtain the
$\bv \in \spv$ such that $\Fmap(\bq,\bv)=\btq$, namely,
\begin{align}\label{ass:kern:inv}
  \text{for each
  $\bq \in \spq$,
  the map $\Fmap(\bq,\cdot):\spv \to \spq$ is
  one-to-one.}
\end{align}

\begin{Remark}
  One may formulate $Q$ as in \eqref{ass:kern:tierney} maintaining
  \eqref{ass:kern:inv} in a nontrivial fashion for MCMC methods with
  additive noise such as the RWMC, pCN, and MALA schemes, all of which
  fall into the framework \eqref{eq:kern:tierney} outlined in
  \cite{Tierney1998}.  In other words, for each of these examples, we
  may determine $F$ and $\Vker$ in a form distinct from
  \eqref{eq:flip:it:baby}; see, for example, \eqref{eq:RWMC:prop},
  \eqref{eq:MALA:prop:comp}, \eqref{def:F:pCN}, and \eqref{def:F:mala}
  in \cref{sec:class:ex} below.  By contrast, \eqref{ass:kern:inv}
  does not encompass HMC, for which there may be more than one $\bv$
  leading to the same proposal.
\end{Remark}

We now show in \cref{thm:Tierney} that in the formulation
\eqref{ass:kern:tierney} under \eqref{ass:kern:inv} we may obtain a suitable
involution $S$ which yields an equivalence between the Markov
transition kernels in \eqref{eq:ext:MH:ker:diff},
\cref{thm:gen:rev:new}, and \eqref{eq:kern:tierney}, from
\cite{Tierney1998}.  As a preliminary step we show how to construct
this involution $\Smap$ in $(\bq,\bv)$-space corresponding to any
$\Fmap$ satisfying \eqref{ass:kern:inv}.

\begin{Proposition}\label{prop:inv:S}
  Let $\spq$, $\spv$ be any sets and let
  $\Fmap: \spq \times \spv \to \spq$ be a mapping satisfying
  \eqref{ass:kern:inv}, i.e. such that for each fixed $\bq \in \spq$,
  $\Fmap(\bq, \cdot): \spv \to \spq$ is one-to-one.  Then, there
  exists a unique mapping
  $\Smap: \spq \times \spv \to \spq \times \spv$ such that
  $\Projq \circ \Smap = \Fmap$ and $\Smap^2 = I$, given by
  \begin{align}\label{def:inv:S}
	\Smap(\bq, \bv) = (\Fmap(\bq, \bv), \Fmap(\Fmap(\bq, \bv),\cdot)^{-1}(\bq))
          \quad \mbox{ for all } (\bq, \bv) \in \spq \times \spv.
  \end{align}
\end{Proposition}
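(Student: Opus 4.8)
The plan is to exploit the fact that the constraint $\Projq \circ \Smap = \Fmap$ completely pins down the first coordinate of $\Smap$, so that writing $\Smap(\bq,\bv) = (\Fmap(\bq,\bv), \Gmap(\bq,\bv))$ the only freedom lies in the second-coordinate map $\Gmap: \spq \times \spv \to \spv$, and the whole statement reduces to showing that the involution condition $\Smap^2 = I$ determines $\Gmap$ uniquely and is met by the choice in \eqref{def:inv:S}. I would handle uniqueness first. Suppose $\Smap$ is any such involution and abbreviate $\btq := \Fmap(\bq,\bv)$, $\btv := \Gmap(\bq,\bv)$, so that $\Smap(\bq,\bv) = (\btq,\btv)$. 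Expanding $\Smap^2(\bq,\bv) = (\bq,\bv)$ coordinatewise, its first coordinate reads $\Fmap(\btq,\btv) = \bq$; thus $\bq$ lies in the range of $\Fmap(\btq,\cdot)$, and since this map is one-to-one by \eqref{ass:kern:inv}, $\btv$ is its unique preimage of $\bq$. Hence necessarily $\Gmap(\bq,\bv) = \Fmap(\btq,\cdot)^{-1}(\bq) = \Fmap(\Fmap(\bq,\bv),\cdot)^{-1}(\bq)$, which is exactly \eqref{def:inv:S}. This already establishes uniqueness and isolates the only candidate.

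For existence I would take $\Smap$ defined by \eqref{def:inv:S} and verify $\Smap^2 = I$ by a direct two-line computation. Keeping the notation $(\btq,\btv) = \Smap(\bq,\bv)$, the very definition of the second coordinate gives $\Fmap(\btq,\btv) = \bq$, so the first coordinate of $\Smap^2(\bq,\bv)$ equals $\bq$ automatically. For the second coordinate, I would compute $\Gmap(\btq,\btv) = \Fmap(\Fmap(\btq,\btv),\cdot)^{-1}(\btq) = \Fmap(\bq,\cdot)^{-1}(\btq)$, using $\Fmap(\btq,\btv)=\bq$; since $\btq = \Fmap(\bq,\bv)$ and $\Fmap(\bq,\cdot)$ is injective, its inverse sends $\btq$ back to $\bv$, whence $\Gmap(\btq,\btv) = \bv$. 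Therefore $\Smap^2(\bq,\bv) = (\bq,\bv)$, as desired.

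The step I expect to be the genuine obstacle is the \emph{well-definedness} of the second coordinate in \eqref{def:inv:S}: the expression $\Fmap(\Fmap(\bq,\bv),\cdot)^{-1}(\bq)$ only makes sense if $\bq$ belongs to the range of $\Fmap(\btq,\cdot)$, and injectivity alone does not guarantee this (one can exhibit one-to-one $\Fmap$ for which, for some $(\bq,\bv)$, the point $\bq$ is never attained by $\Fmap(\Fmap(\bq,\bv),\cdot)$). This is precisely where surjectivity must enter: under the formulation described in the introduction, where $\Fmap(\bq,\cdot)$ is taken \emph{invertible} (hence a bijection $\spv \to \spq$) for each fixed $\bq$, the inverse $\Fmap(\btq,\cdot)^{-1}$ is defined on all of $\spq$ and the existence computation above goes through unconditionally. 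It is worth emphasizing that the uniqueness half requires only injectivity, since there the relation $\Fmap(\btq,\btv) = \bq$ is forced by $\Smap^2 = I$ and automatically exhibits $\btv$ as the needed preimage.
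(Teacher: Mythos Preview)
Your proof is correct. For uniqueness you argue exactly as the paper does: write $\Smap(\bq,\bv)=(\Fmap(\bq,\bv),\Gmap(\bq,\bv))$, expand $\Smap^2=I$, and read off $\Gmap$ from the first coordinate. For existence you differ slightly: the paper introduces the auxiliary maps $\Bmap(\bq,\bv)=(\bq,\Fmap(\bq,\bv))$ and the coordinate flip $\Cmap(\bq,\btq)=(\btq,\bq)$, then checks that the formula \eqref{def:inv:S} is nothing but $\Smap=\Bmap^{-1}\circ\Cmap\circ\Bmap$, from which $\Smap^2=I$ is immediate since $\Cmap^2=I$. Your direct verification is just as short; the paper's conjugation viewpoint pays off later, however, because the same maps $\Bmap,\Cmap$ reappear in the proof of \cref{thm:Tierney} to identify $\pdms=\Bmap^*\cM$ and $\pdmsp=\Cmap^*\pdms$.

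Your remark about well-definedness is on point and in fact sharper than the paper. The paper writes $\Bmap^{-1}(\bq,\btq)=(\bq,\Fmap(\bq,\cdot)^{-1}(\btq))$ ``for all $(\bq,\btq)\in\spq\times\spq$'' as if this were automatic from \eqref{ass:kern:inv}, but that requires each $\Fmap(\bq,\cdot)$ to be onto $\spq$, not merely one-to-one. The intended hypothesis (stated as ``invertible'' in the introduction and implicit in all the applications) is that each $\Fmap(\bq,\cdot)$ is a bijection $\spv\to\spq$; with that reading both proofs go through without further comment, and as you note the uniqueness half needs only injectivity.
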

\begin{proof}
  First, let us verify that $\Smap$ defined in \eqref{def:inv:S}
  satisfies the required properties. Clearly,
  $\Projq \circ \Smap = \Fmap$, so it remains to show that
  $\Smap^2 = I$. 
  Define the maps $\Bmap: \spq \times \spv \to \spq \times \spq$ and 
  $\Cmap: \spq \times \spq \to \spq \times \spq$ by
    \begin{equation}
        \Bmap(\bq,\bv) = (\bq,\Fmap(\bq,\bv)), 
        \qquad   
        \Cmap(\bq,\btq) = (\btq,\bq)
        \label{eq:tierney:BCmaps}
    \end{equation}
    for all $\bq \in \spq$, $\btq \in \spq$ and $\bv \in \spv$. 
    Note that $\Cmap^2=I$ trivially and by assumption \eqref{ass:kern:inv} we have
    \begin{align*}
        \Bmap^{-1}(\bq,\btq)=(\bq,\Fmap(\bq,\cdot)^{-1}(\btq)) 
        \quad \mbox{ for all } (\bq, \btq) \in \spq \times \spq.
    \end{align*}
    Then it is not difficult to check that $\Smap$ as in
    \eqref{def:inv:S} can be written as
    \begin{equation}
        \Smap(\bq,\bv) = \Bmap^{-1} \circ \Cmap \circ \Bmap(\bq,\bv) 
        \quad \mbox{ for all } (\bq,\bv) \in \spq \times \spv.
        \label{eq:jkbs:S}
    \end{equation}
    Then clearly $S^2=I$ since $\Cmap^2 = I$.

  Now suppose that $\bS: \spq \times \spv \to \spq \times \spv$ is any
  mapping satisfying the required properties, i.e.
  $\Projq \circ \bS = \Fmap$ and $\bS^2 = I$. Let
  $\Gmap: \spq \times \spv \to \spv$ such that
  \begin{align*}
    \bS(\bq, \bv) = (\Fmap(\bq, \bv) , \Gmap(\bq, \bv))
    \quad \mbox{ for all } (\bq, \bv) \in \spq \times \spv.
  \end{align*}
  Thus,
  \begin{align*}
	\bS^2 (\bq, \bv) 
	= \bS(\bS(\bq, \bv)) 
    = (\Fmap(\Fmap(\bq, \bv), \Gmap(\bq, \bv)),
       \Gmap(\Fmap(\bq, \bv), \Gmap(\bq, \bv))).    
  \end{align*}
  Since $\bS^2 = I$, it follows in particular that
  \begin{align*}
    \Fmap(\Fmap(\bq, \bv), \Gmap(\bq, \bv)) = \bq,
  \end{align*}
  which implies 
  \begin{align*}	
	\Gmap(\bq, \bv) =  	\Fmap(\Fmap(\bq, \bv), \cdot)^{-1}(\bq).
  \end{align*}
  Therefore, $\bS = \Smap$. This concludes the proof.
\end{proof}

With \cref{prop:inv:S} in hand we now turn to our equivalence result:
\begin{Theorem}\label{thm:Tierney}
  Let $\spq$ and $\spv$ be measurable spaces. Take $\mu$ to be a probability measure
  on $\spq$ and consider an associated proposal kernel $\Pker$
  satisfying \eqref{ass:kern:tierney}, \eqref{ass:kern:inv},
  i.e. 
  $\Pker(\bq, d \btq)= (\Fmap(\bq, \cdot))^* \Vker(\bq, \cdot)(d
  \btq)$, where $\Vker: \spq \times \Sigma_\spv \to [0,1]$ is a Markov
  kernel and $\Fmap: \spq \times \spv \to \spq$ is a measurable mapping such that $\Fmap(\bq, \cdot): \spv \to \spq$ is
  one-to-one for each fixed $\bq \in \spq$ and, additionally, its inverse $F(\bq,\cdot)^{-1}$ is measurable\footnote{As remarked in \cref{subsec:prelim:meas}, here we notice again that that if $\spq$ and $\spv$ are Polish spaces, i.e. separable and completely metrizable, then the fact that, for each fixed $\bq \in \spq$, $F(\bq, \cdot)$ is a measurable and one-to-one mapping between Polish spaces automatically implies that its inverse is measurable, see e.g. \cite[Theorem 12.29]{Aliprantis2013}.}. We define the
  probability measures $\pdms, \pdmsp$ on $\spq \times \spq$ as in
  \eqref{eq:meas:tierney} and let
  $\Smap: \spq \times \spv \to \spq \times \spv$ be the unique
  mapping satisfying $\Pi_1 \circ \Smap = \Fmap$ and $\Smap^2 = I$,
  given by \eqref{def:inv:S} in \cref{prop:inv:S}.  Then, denoting by
  $\cM$ the probability measure on $\spq \times \spv$ defined as in
  \eqref{def:ext:target}, we have
  \begin{itemize}
  \item[(i)] the measure $\pdmsp$ is absolutely continuous with
    respect to $\pdms$ if and only if $\Smap^*\cM$ is
    absolutely continuous with respect to $\cM$.
  \item[(ii)] Moreover, under either of the equivalent
    circumstances in (i), we have
  \begin{align}\label{RN:eta:S}
   \frac{d \pdmsp}{d \pdms}(\bq, \Fmap(\bq, \bv))
    = \frac{d \Smap^*\cM}{d \cM}
    (\bq, \bv)
  \end{align}
  for
  $\cM\text{-a.e. } (\bq, \bv) \in \spq \times \spv$
  or, equivalently,
  \begin{align}\label{RN:eta:S:inv:a}
    \frac{d \pdmsp}{d \pdms}(\bq, \btq)
    = \frac{d \Smap^*\cM}{d\cM}(\bq,F(\bq, \cdot)^{-1}(\btq))
  \end{align}
  for $\eta\text{-a.e. } (\bq, \btq) \in \spq \times \spq$.
\item[(iii)] Furthermore, under either of these equivalent absolute
  continuity conditions, the Markov kernels given by
  \eqref{eq:ext:MH:ker:diff} and \eqref{eq:kern:tierney} coincide,
  where $\arFn$ and $\harFn$ are as defined in \eqref{eq:ar:tierney}
  and \eqref{def:arFn:b}, respectively.  These acceptance functions
  $\arFn$ and $\harFn$ maintain the relationships
  \begin{align}\label{RN:eta:S:inv}
    \arFn(\bq, \btq) = \harFn(\bq, F(\bq, \cdot)^{-1}(\btq)), \quad
    \harFn(\bq, \bv) = \arFn(\bq, F(\bq, \bv))
  \end{align}
  for $\bq \in \spq$, $\btq \in \spq$ and $\bv \in \spv$.
\end{itemize}

\begin{Remark}
Reiterating the discussion at the beginning of this subsection, notice that, 
modulo the requirement that $\spq = \spv$, 
we see that \cref{thm:Tierney} reduces to \cref{rmk:flip:it:baby} by taking 
$F(\bq,\bv) = \bv$ and $\Vker = \Pker$.
\end{Remark}

\end{Theorem}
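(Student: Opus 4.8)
The plan is to reduce the entire statement to the pushforward calculus collected in \cref{subsec:prelim:meas}, by representing both $\pdms$ and $\pdmsp$ as pushforwards of $\cM$ through the auxiliary maps $\Bmap,\Cmap$ from \eqref{eq:tierney:BCmaps}. The cornerstone is the pair of identities $\pdms = \Bmap^*\cM$ and $\pdmsp = \Bmap^*(\Smap^*\cM)$, where $\Bmap(\bq,\bv)=(\bq,\Fmap(\bq,\bv))$. The first is a direct computation: testing against an arbitrary bounded measurable $\varphi(\bq,\btq)$ and applying the change of variables \eqref{eq:CoV:push} to the pushforward defining $\Pker$ in \eqref{ass:kern:tierney} turns $\int\varphi\,\mu(d\bq)\Pker(\bq,d\btq)$ into $\int \varphi(\bq,\Fmap(\bq,\bv))\cM(d\bq,d\bv)$, which is exactly $\int\varphi\,d\Bmap^*\cM$. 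For the second identity I would first note that $\pdmsp=\Cmap^*\pdms$, since $\Cmap(\bq,\btq)=(\btq,\bq)$ and $\pdmsp$ is by \eqref{eq:meas:tierney} the coordinate swap of $\pdms$; combining with $\pdms=\Bmap^*\cM$ through \eqref{eq:pushf:comp} gives $\pdmsp=(\Cmap\circ\Bmap)^*\cM$. I then invoke the factorization $\Smap=\Bmap^{-1}\circ\Cmap\circ\Bmap$ already established in \eqref{eq:jkbs:S} (equivalently $\Cmap\circ\Bmap=\Bmap\circ\Smap$) to conclude $\pdmsp=(\Bmap\circ\Smap)^*\cM=\Bmap^*(\Smap^*\cM)$.

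With these two representations, parts (i) and (ii) follow from the invertibility of $\Bmap$. The hypotheses that $\Fmap(\bq,\cdot)$ is one-to-one with measurable inverse make $\Bmap$ a measurable bijection with measurable inverse $\Bmap^{-1}(\bq,\btq)=(\bq,\Fmap(\bq,\cdot)^{-1}(\btq))$. Applying \eqref{RN:pushfwd} with $\phi=\Bmap$ shows that $\Smap^*\cM\ll\cM$ implies $\pdmsp=\Bmap^*(\Smap^*\cM)\ll\Bmap^*\cM=\pdms$; the converse follows by applying the same result with $\phi=\Bmap^{-1}$ and using \eqref{eq:pushf:comp} to cancel $\Bmap^{-1}\circ\Bmap=I$, which yields (i). For (ii), the Radon--Nikodym formula in \eqref{RN:pushfwd} directly gives $\frac{d\pdmsp}{d\pdms}(\bq,\btq)=\frac{d\Smap^*\cM}{d\cM}(\Bmap^{-1}(\bq,\btq))=\frac{d\Smap^*\cM}{d\cM}(\bq,\Fmap(\bq,\cdot)^{-1}(\btq))$, which is \eqref{RN:eta:S:inv:a}; substituting $\btq=\Fmap(\bq,\bv)$ produces \eqref{RN:eta:S}. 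Here I would track the almost-everywhere qualifiers carefully, noting that a $\pdms$-a.e.\ statement transports to a $\cM$-a.e.\ statement precisely because $\pdms=\Bmap^*\cM$ with $\Bmap$ a bijection.

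For part (iii) the acceptance-function identities \eqref{RN:eta:S:inv} are read off by composing the definitions \eqref{def:arFn:b} and \eqref{eq:ar:tierney} with (ii): $\harFn(\bq,\bv)=1\wedge\frac{d\Smap^*\cM}{d\cM}(\bq,\bv)=1\wedge\frac{d\pdmsp}{d\pdms}(\bq,\Fmap(\bq,\bv))=\arFn(\bq,\Fmap(\bq,\bv))$, and symmetrically for the other relation via \eqref{RN:eta:S:inv:a}. To see that the kernels \eqref{eq:ext:MH:ker:diff} and \eqref{eq:kern:tierney} coincide, I would test each against a bounded measurable $\varphi(\btq)$ and match term by term. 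Using $\Projq\circ\Smap=\Fmap$ together with $\harFn(\bq,\bv)=\arFn(\bq,\Fmap(\bq,\bv))$, the accept term of \eqref{eq:ext:MH:ker:diff} integrates to $\int_\spv\varphi(\Fmap(\bq,\bv))\arFn(\bq,\Fmap(\bq,\bv))\Vker(\bq,d\bv)$, which by the change of variables \eqref{eq:CoV:push} applied to the pushforward defining $\Pker$ equals $\int_\spq\varphi(\btq)\arFn(\bq,\btq)\Pker(\bq,d\btq)$, the accept term of \eqref{eq:kern:tierney}; the same change of variables matches the rejection masses $\int_\spv(1-\harFn(\bq,\bv))\Vker(\bq,d\bv)=\int_\spq(1-\arFn(\bq,\br))\Pker(\bq,d\br)$, so the full kernels agree.

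I expect the only genuine subtlety to be bookkeeping rather than conceptual difficulty: confirming the factorization $\Cmap\circ\Bmap=\Bmap\circ\Smap$ (which is essentially \eqref{eq:jkbs:S}), ensuring $\Bmap^{-1}$ is measurable so that \eqref{RN:pushfwd} applies, and transporting the almost-everywhere qualifiers between $\cM$ and $\pdms$ through the bijection $\Bmap$. Once the two pushforward representations of $\pdms$ and $\pdmsp$ are in hand, everything else is a mechanical application of the pushforward lemmas from \cref{subsec:prelim:meas}.
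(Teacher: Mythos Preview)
Your proposal is correct and follows essentially the same route as the paper: both hinge on the representations $\pdms=\Bmap^*\cM$ and $\pdmsp=(\Bmap\circ\Smap)^*\cM=\Bmap^*(\Smap^*\cM)$ obtained via \eqref{eq:jkbs:S}, then transfer absolute continuity and Radon--Nikodym derivatives through $\Bmap$, and finally match the two kernels by a change of variables. The only cosmetic differences are that the paper unpacks the integral computation behind \eqref{RN:pushfwd} explicitly for (ii) and transforms $\Tker$ into $P$ line by line for (iii), whereas you invoke the lemma and test against $\varphi$; also note that $\Bmap$ is only assumed one-to-one (not onto $\spq\times\spq$), so ``bijection'' should be read as a bijection onto its image, which suffices since $\pdms$ and $\pdmsp$ are both supported there.
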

\begin{proof}
  Let $\Bmap: \spq \times \spv \to \spq \times \spq$ and
  $\Cmap: \spq \times \spq \to \spq \times \spq$ be defined as in
  \eqref{eq:tierney:BCmaps}. Then it is not hard to see from
  \eqref{eq:meas:tierney} that
\begin{equation}
    \pdmsp(d\bq,d\btq) = \Cmap^{*} \pdms(d\bq,d\btq)
    \label{eq:jkbs:eta:C}
\end{equation}
and similarly from \eqref{ass:kern:tierney} that
\begin{equation}
    \pdms(d\bq,d\btq) = \Bmap^{*} \cM (d\bq,d\btq)
    \label{eq:jkbs:eta:B}.
\end{equation}
Then combining \eqref{eq:jkbs:S}, \eqref{eq:jkbs:eta:C}, and
\eqref{eq:jkbs:eta:B} yields
\begin{equation}
    \pdmsp(d\bq,d\btq) 
    = (\Cmap\circ\Bmap)^{*} \cM (d\bq,d\btq)
    = (\Bmap\circ\Smap)^{*} \cM (d\bq,d\btq).
    \label{eq:jkbs:eta:BS}
\end{equation}

Let us first show that $\pdmsp \ll \pdms$ implies
$\Smap^* \cM \ll \cM$. Indeed, let $A$ be a measurable subset of
$\spq \times \spv$ such that $\cM(A) = 0$. 
By the assumptions on $F$ it follows that $\Bmap$ is a
one-to-one measurable mapping with measurable inverse.
Then $\Bmap(A)$
is also a Borel set in $\spq \times \spq$.
From \eqref{eq:jkbs:eta:B}, we thus obtain
\[
    0 = \cM(A) = \cM(\Bmap^{-1} \circ \Bmap (A)) = \Bmap^* \cM(\Bmap(A)) = \pdms (\Bmap(A)).
\]
Since $\pdmsp \ll \pdms$, this implies that $\pdmsp(\Bmap(A)) =
0$. With \eqref{eq:jkbs:eta:BS}, we deduce
\[
  0 = \pdmsp(\Bmap(A)) = (\Bmap \circ \Smap)^* \cM(\Bmap(A)) = \Smap^*
  \cM(\Bmap^{-1}\circ \Bmap(A)) = \Smap^* \cM(A).
\]
This shows that $\Smap^* \cM \ll \cM$. The reciprocal claim, that
$\Smap^* \cM \ll \cM$ implies $\pdmsp \ll \pdms$, follows similarly by
invoking \eqref{eq:jkbs:eta:B} and \eqref{eq:jkbs:eta:BS}. This
concludes the proof of item (i).

Now assuming any of the equivalent circumstances in (i), notice that,
for any bounded and measurable function
$\varphi: \spq \times \spq \to \RR$, application of
\eqref{eq:jkbs:eta:BS} and \eqref{eq:jkbs:eta:B} yields
\begin{align*}
    \int\limits_{\spq \times \spv} \! \! \!  \varphi(\bq, \bv) \Smap^* \cM(d\bq,d\bv)
  &= \!  \int\limits_{\spq \times \spq}
    \! \! \!  \varphi(\Bmap^{-1}(\bq, \btq)) (\Bmap \circ \Smap)^* \cM (d\bq,d\btq) 
    = \! \int\limits_{\spq \times \spq}
    \! \! \!  \varphi(\Bmap^{-1}(\bq, \btq)) \pdmsp(d\bq,d\btq) \\
  &= \!  \int\limits_{\spq \times \spq}
    \! \!\!   \varphi(\Bmap^{-1}(\bq, \btq)) \frac{d \pdmsp}{d\pdms} (\bq, \btq)\pdms(d\bq,d\btq) 
    =\!  \int\limits_{\spq \times \spq}
    \! \! \!  \varphi(\Bmap^{-1}(\bq, \btq)) \frac{d \pdmsp}{d\pdms} (\bq, \btq)\Bmap^* \cM (d\bq,d\btq) \\
  &= \! \int\limits_{\spq \times \spv}
    \! \! \!  \varphi(\bq, \bv) \frac{d \pdmsp}{d\pdms} (\Bmap(\bq, \bv)) \cM (d\bq,d\bv), 
\end{align*}
so that
\begin{equation*}
    \frac{d \Smap^*\cM}{d \cM } (\bq, \bv)
    =\frac{d \pdmsp}{d\pdms} (\Bmap(\bq, \bv))
    =\frac{d \pdmsp}{d\pdms} (\bq, \Fmap(\bq,\bv))
\end{equation*}
for $\cM$-a.e. $(\bq, \bv) \in \spq \times \spv$. Clearly, this
implies \eqref{RN:eta:S:inv:a}. Indeed, if
$E \subset \spq \times \spq$ is the set of points
$(\bq, \btq) \in \spq \times \spq$ where \eqref{RN:eta:S:inv:a} does
not hold, then it is not difficult to see that \eqref{RN:eta:S} does
not hold for every $(\bq, \bv) \in \Bmap^{-1}(E)$. But since
\eqref{RN:eta:S} holds $\cM$-a.e., then
$\Bmap^* \cM(E) = \cM(\Bmap^{-1}(E)) = 0$. Hence, from
\eqref{eq:jkbs:eta:B}, $\pdms(E) = 0$, so that \eqref{RN:eta:S:inv:a}
holds for $\eta$-a.e. $(\bq, \btq) \in \spq \times \spq$. Similarly,
\eqref{RN:eta:S:inv:a} implies \eqref{RN:eta:S}, so that these are
indeed equivalent.

Finally, concerning the coincidence of the
kernels $P$ and $\Tker$ in (iii), beginning from \eqref{eq:kern:tierney} and
applying \eqref{ass:kern:tierney} followed by \eqref{RN:eta:S}, we have
\begin{align} 
    \Tker(\bq,d\btq) &= \arFn(\bq,\btq)Q(\bq,d\btq) + \delta_\bq(d\btq)
                   \int_\spq (1-\arFn(\bq,\br))\Pker(\bq,d\br)
  \notag\\
      &= \arFn(\bq,\btq)\Fmap(\bq, \cdot)^* \Vker(\bq,\cdot)(d \btq)
                   + \delta_\bq(d\btq) \int_\spq
        \left(1-\arFn(\bq,\br)\right)\Fmap(\bq, \cdot)^*
                    \Vker(\bq,\cdot)(d \br)
  \notag\\
    &= \int_\spq \delta_\br(d\btq) \arFn(\bq,\br)\Fmap(\bq, \cdot)^*
       \Vker(\bq,\cdot)(d \br)
      + \delta_\bq(d\btq) \int_\spq
      \left(1-\arFn(\bq,\br)\right)\Fmap(\bq, \cdot)^* \Vker(\bq,\cdot)(d \br)
  \notag\\
    &= \int_\spq \delta_{\Fmap(\bq, \bv)}(d\btq) \arFn(\bq,F(\bq,\bv)) \Vker(\bq,d \bv)
      + \delta_\bq(d\btq) \int_\spq
      \left(1-\arFn(\bq,F(\bq,\bv))\right)  \Vker(\bq,d \bv)
  \notag\\
    &= \int_\spq \delta_{\Projq \circ \Smap(\bq, \bv)}(d\btq) \harFn(\bq, \bv) \Vker(\bq, d \bv)
      + \delta_\bq(d\btq) \int_\spq \left(1-\harFn(\bq, \bv)\right)
       \Vker(\bq,d \bv),
      \label{eq:Ker:ext}
\end{align}
which is \eqref{eq:ext:MH:ker:diff}.  The proof is now complete.
\end{proof}

\subsection{An alternative proof of \cref{thm:gen:rev:new}}
\label{subsec:alt:proof}

Turning to our second task in this section we now show how the
formulation in \cite{Tierney1998} can be employed to develop a second
independent proof of \cref{thm:gen:rev:new}.  Here we proceed by
defining an appropriate proposal kernel $\Pker$ on a product space
$\spq \times \spv$, with $(\spq, \Sigma_\spq)$ and
$(\spv, \Sigma_\spv)$ being any measurable spaces, and then taking an
appropriate marginal of the corresponding transition kernel $\Tker$ as
in \eqref{eq:kern:tierney}.

We start with a more general result which is actually independent of
this particular product structure.

\begin{Theorem}\label{thm:tierney:ext}
  Let $(\cX, \Sigma_\cX)$ be a measurable space and let $\cM$ be any
  probability measure on $\cX$. Suppose there exists a measurable
  mapping $S: \cX \to \cX$ satisfying the following properties
\begin{enumerate}[label={(P\arabic*)}]
    \item\label{P1:t} $S$ is an involution, i.e. $S^2 = I$;
    \item\label{P2:t} $S^* \cM$ is absolutely continuous with respect
      to $\cM$.
\end{enumerate}
Consider the Markov kernel $Q$ on $\cX$ defined as
\begin{align}\label{def:Q:tX}
    Q(\bu, d \btu) = \delta_{S(\bu)}(d \btu), \quad \bu \in \cX,
\end{align}
and let $\eta$, $\eta^\perp$ be the measures on $\cX \times \cX$ given
as
\begin{align}\label{def:eta:tX}
    \eta(d \bu, d \btu) = Q(\bu, d\btu) \cM(d \bu), \quad
    \eta^\perp(d \bu, d \btu) = \eta(d \btu, d \bu).
\end{align}
Then $\eta$ and $\eta^\perp$ are mutually absolutely continuous, with
\begin{align}\label{ar:tierney:S:ext}
  \frac{d\eta^\perp}{d\eta}(\bu, \btu) = \frac{d S^*\cM}{d \cM}(\bu)
  \quad \mbox{ for $\eta$-a.e. } (\bu, \btu) \in \cX \times \cX.
\end{align}
Consequently, the Markov kernel $\Tker$ on $\cX$ defined as in
\eqref{eq:kern:tierney} for $Q$, $\arFn$, $\eta$ and $\eta^\perp$ as
in \eqref{def:Q:tX}, \eqref{eq:ar:tierney} and \eqref{def:eta:tX},
respectively, written here as
\begin{align}\label{Tker:tierney:ext}
  \Tker(\bu, d \btu) = \arFn(\bu, \btu) \delta_{S(\bu)}(d \btu)
  + \delta_{\bu}(d \btu) \left[1 - \arFn(\bu, S(\bu)) \right],
\end{align}
satisfies detailed balance with respect to $\cM$.
\end{Theorem}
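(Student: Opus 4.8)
The plan is to first establish the mutual absolute continuity of $\eta$ and $\eta^\perp$ together with the density formula \eqref{ar:tierney:S:ext}, and then to read off detailed balance of $\Tker$ by specializing Tierney's result \cite[Theorem 2]{Tierney1998} to the deterministic proposal $Q(\bu, d\btu) = \delta_{S(\bu)}(d\btu)$.

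For the density computation I would test both measures against an arbitrary bounded measurable $\varphi: \cX \times \cX \to \RR$. Since $\eta$ concentrates on the graph of $S$, one has $\int \varphi \, d\eta = \int_\cX \varphi(\bu, S(\bu))\, \cM(d\bu)$, and from $\eta^\perp(d\bu, d\btu) = \eta(d\btu, d\bu)$ one gets $\int \varphi \, d\eta^\perp = \int_\cX \varphi(S(\bu), \bu)\, \cM(d\bu)$. The crucial step is to introduce $\psi(\bw) := \varphi(\bw, S(\bw))$, so that the involution property \ref{P1:t} gives $\psi(S(\bu)) = \varphi(S(\bu), \bu)$; the change-of-variables formula \eqref{eq:CoV:push} then converts $\int_\cX \psi(S(\bu))\, \cM(d\bu)$ into $\int_\cX \psi(\bw)\, S^*\cM(d\bw)$. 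Invoking assumption \ref{P2:t} and the defining relation \eqref{eq:RN} of the Radon-Nikodym derivative to write $S^*\cM(d\bw) = \frac{dS^*\cM}{d\cM}(\bw)\cM(d\bw)$, and folding the result back into an integral against $\eta$, I obtain $\int \varphi \, d\eta^\perp = \int \varphi(\bu, \btu)\frac{dS^*\cM}{d\cM}(\bu)\, \eta(d\bu, d\btu)$. As $\varphi$ is arbitrary, this yields both $\eta^\perp \ll \eta$ and \eqref{ar:tierney:S:ext}.

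To upgrade this to mutual absolute continuity I would appeal to the remark following \cref{thm:gen:rev:new}: under \ref{P1:t} and \ref{P2:t} the measures $S^*\cM$ and $\cM$ are in fact mutually absolutely continuous, whence $\frac{dS^*\cM}{d\cM}$ is strictly positive $\cM$-a.e.\ and so $\frac{d\eta^\perp}{d\eta}$ is strictly positive $\eta$-a.e., giving $\eta \ll \eta^\perp$. For the final assertion I would note that substituting $Q(\bu, d\btu) = \delta_{S(\bu)}(d\btu)$ into the general kernel \eqref{eq:kern:tierney} and evaluating $\int_\cX (1 - \arFn(\bu, \br))\delta_{S(\bu)}(d\br) = 1 - \arFn(\bu, S(\bu))$ reproduces exactly the explicit form \eqref{Tker:tierney:ext}; since $\eta$ and $\eta^\perp$ are mutually absolutely continuous, the ratio $\arFn$ in \eqref{eq:ar:tierney} is well defined, and \cite[Theorem 2]{Tierney1998} applied with target $\cM$ and proposal $Q$ then delivers detailed balance of $\Tker$ with respect to $\cM$.

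The single delicate point is the change of variables in the second paragraph: the right maneuver is the choice $\psi(\bw) = \varphi(\bw, S(\bw))$, which uses $S^2 = I$ to recast the flipped integrand $\varphi(S(\bu), \bu)$ as $\psi(S(\bu))$ so that the pushforward identity \eqref{eq:CoV:push} applies. Everything else is routine bookkeeping with the definitions and the measure-theoretic facts collected in \cref{subsec:prelim:meas}.
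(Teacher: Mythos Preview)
Your proposal is correct and follows essentially the same route as the paper: testing $\eta^\perp$ against an arbitrary bounded measurable $\varphi$, using the involution property to rewrite $\varphi(S(\bu),\bu)$ as $\varphi(S(\bu),S^2(\bu))$ (your $\psi\circ S$), applying the change-of-variables formula and the Radon--Nikodym derivative from \ref{P2:t}, and then invoking \cite[Theorem 2]{Tierney1998} for detailed balance. The only cosmetic difference is that the paper handles the reverse direction $\eta \ll \eta^\perp$ by saying ``similarly'' rather than via your positivity-of-density argument, but both are valid.
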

\begin{proof}
  Let $\varphi: \cX \times \cX \to \RR$ be any bounded and measurable
  function. Recalling the definition of $\eta$ and $\eta^\perp$ from
  \eqref{def:eta:tX}, we have
\begin{align*}
    \int_{\cX \times \cX} \varphi(\bu, \btu) \eta^\perp (d\bu, d \btu)
    &= \int_{\cX \times \cX} \varphi(\bu, \btu) \eta (d\btu, d \bu)
    = \int_{\cX \times \cX} \varphi(\bu, \btu) \delta_{S(\btu)} (d\bu) \cM (d \btu) \\
    &= \int_{\cX} \varphi(S(\btu), \btu) \cM (d \btu) 
\end{align*}
Now invoking properties \ref{P1c} and \ref{P2c} of $S$, we deduce that
\begin{align*}
    \int_{\cX \times \cX} \varphi(\bu, \btu) \eta^\perp (d\bu, d \btu)
    &= \int_{\cX} \varphi(S(\btu), S^2(\btu)) \cM (d \btu) 
    = \int_{\cX} \varphi(\bu, S(\bu)) S^*\cM (d \bu) \\
    &= \int_{\cX} \varphi(\bu, S(\bu)) \frac{d S^*\cM}{d \cM}(\bu) \cM (d \bu) 
      = \int_{\cX \times \cX} \varphi(\bu, \btu)
           \frac{d S^*\cM}{d \cM}(\bu) \delta_{S(\bu)}(d \btu) \cM (d \bu) \\
    &= \int_{\cX \times \cX} \varphi(\bu, \btu) \frac{d S^*\cM}{d\cM}(\bu)
      \eta(d \bu, d \btu).
\end{align*}
This shows that $\eta^\perp \ll \eta$ and that
\eqref{ar:tierney:S:ext} holds. Similarly, we can show that
$\eta \ll \eta^\perp$, so that in fact $\eta$ and $\eta^\perp$ are
mutually absolutely continuous.

The observation that $\Tker$ defined in \eqref{eq:kern:tierney} can be
written as \eqref{Tker:tierney:ext} for $Q$ as in \eqref{def:Q:tX} is
clear. Finally, the fact that $\Tker$ is reversible with respect to
$\cM$ follows as a consequence of the general result given in
\cite[Theorem 2]{Tierney1998}.  The proof is complete.
\end{proof}

Applying \cref{thm:tierney:ext} combined with a suitable
marginalization of the Markov kernel $\Tker$ in
\eqref{Tker:tierney:ext} now yields the result of
\cref{thm:gen:rev:new}.

\begin{Corollary}\label{cor:marg}
  Let $(\spq, \Sigma_\spq)$ and $(\spv, \Sigma_\spv)$ be measurable
  spaces. Let $\mu$ be a probability measure on $\spq$, and
  $\Vker: \spq \times \Sigma_\spv \to [0,1]$ be a Markov
  kernel. Consider $\cX = \spq \times \spv$ and suppose there exists a
  measurable mapping $S: \cX \to \cX$ satisfying properties
  \ref{P1c}-\ref{P2c} from \cref{thm:gen:rev:new} (with $\cM$
  specified as in \eqref{def:ext:target} relative to inputs $\Vker$
  and $\mu$ given here). Let $\Tker$ be the Markov kernel on $\cX$
  defined as in \eqref{Tker:tierney:ext}, and define $P$ to be the
  Markov kernel on $\spq$ given as
\begin{align}\label{Tker:marg}
  P(\bq, d\btq) \coloneqq
  \int_\spv \int_\spv \Tker((\bq, \bv), d \btq \, d \btv) \Vker(\bq, d \bv) 
  \quad \mbox{ for all } \bq \in \spq
  \mbox{ and } A \in \Sigma_\spq.
\end{align}
or equivalently as
$P(\bq, A) \coloneqq \int_\spv \Tker((\bq, \bv), A \times \spv)
\Vker(\bq, d \bv)$ for all $\bq \in \spq$ and $A \in
\Sigma_\spq$. Then $P$ satisfies detailed balance with respect to $\mu$. Moreover,
$P(\bq, \cdot)$ coincides with the definition given in
\eqref{eq:ext:MH:ker:diff} for $\mu$-a.e. $\bq \in \spq$.
\end{Corollary}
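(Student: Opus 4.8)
The plan is to derive both assertions from \cref{thm:tierney:ext}, which already guarantees that $\Tker$ in \eqref{Tker:tierney:ext} is reversible with respect to $\cM$, together with the identification \eqref{ar:tierney:S:ext} of Tierney's acceptance ratio with $\harFn$. First I would establish that $P$ coincides with the kernel \eqref{eq:ext:MH:ker:diff}; the reversibility of $P$ with respect to $\mu$ would then be obtained as a separate, self-contained consequence of the reversibility of $\Tker$, so that the argument does not appeal to \cref{thm:gen:rev:new} and thereby furnishes the promised independent proof.

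For the coincidence, I would evaluate the marginal $\Tker((\bq,\bv), A\times\spv)$ directly from \eqref{Tker:tierney:ext}. Since $\delta_{S(\bq,\bv)}(A\times\spv) = \indFn{A}(\Projq\circ S(\bq,\bv))$ and $\delta_{(\bq,\bv)}(A\times\spv) = \indFn{A}(\bq)$, one obtains
\begin{align*}
  \Tker((\bq,\bv), A\times\spv)
  = \arFn((\bq,\bv), S(\bq,\bv))\,\indFn{A}(\Projq\circ S(\bq,\bv))
  + \indFn{A}(\bq)\,\bigl[1 - \arFn((\bq,\bv), S(\bq,\bv))\bigr].
\end{align*}
The key step is to invoke \eqref{ar:tierney:S:ext}, which, evaluated along the graph $\btu = S(\bu)$ carrying $\eta$, gives $\arFn((\bq,\bv), S(\bq,\bv)) = 1\wedge\frac{d\eta^\perp}{d\eta}((\bq,\bv), S(\bq,\bv)) = 1\wedge\frac{dS^*\cM}{d\cM}(\bq,\bv) = \harFn(\bq,\bv)$ for $\cM$-a.e. $(\bq,\bv)$. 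Writing $N$ for the exceptional set and $N_\bq$ for its $\bq$-section, the bound $\cM(N) = \int_\spq \Vker(\bq, N_\bq)\,\mu(d\bq) = 0$ forces $\Vker(\bq, N_\bq) = 0$ for $\mu$-a.e.\ $\bq$. Substituting the displayed identity into $P(\bq, A) = \int_\spv \Tker((\bq,\bv), A\times\spv)\,\Vker(\bq, d\bv)$ and integrating then reproduces \eqref{eq:ext:MH:ker:diff} for $\mu$-a.e.\ $\bq$.

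For reversibility, I would test against an arbitrary bounded measurable $\varphi:\spq\times\spq\to\RR$ and lift it to $\tilde\varphi(\bu,\btu) := \varphi(\Projq \bu, \Projq \btu)$ on $\cX\times\cX$, a bounded measurable function that ignores the velocity coordinates. Using the marginalization defining $P$ in \eqref{Tker:marg} together with $\cM(d\bq, d\bv) = \Vker(\bq, d\bv)\mu(d\bq)$, I would rewrite $\int_\spq\int_\spq \varphi(\bq,\btq)\,P(\bq, d\btq)\,\mu(d\bq) = \int_\cX\int_\cX \tilde\varphi(\bu,\btu)\,\Tker(\bu, d\btu)\,\cM(d\bu)$; the point is that $\tilde\varphi$ does not depend on the $\btv$-coordinate, so integrating $\Tker$ over $\btv$ yields exactly the first-marginal in \eqref{Tker:marg}. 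Applying the reversibility of $\Tker$ with respect to $\cM$ from \cref{thm:tierney:ext} swaps $\bu$ and $\btu$, and reversing the same marginalization (now integrating out $\bv$, which $\tilde\varphi$ also ignores) recovers $\int_\spq\int_\spq \varphi(\bq,\btq)\,P(\btq, d\bq)\,\mu(d\btq)$. This is precisely the weak form \eqref{eq:testfc:rev:b} of detailed balance for $P$ with respect to $\mu$.

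I expect the main obstacle to be the careful bookkeeping in this last step: one must verify that, because $\tilde\varphi$ depends only on the position variables, both velocity integrations—over $\btv$ on the forward pass and over $\bv$ after the swap—collapse exactly onto the marginals defining $P$, so that no spurious dependence on $\Vker$ in the target slot survives. The remaining measure-theoretic points, namely the a.e.\ identification of $\arFn((\bq,\bv), S(\bq,\bv))$ with $\harFn(\bq,\bv)$ and the passage from a $\cM$-null set to $\Vker(\bq,\cdot)$-null sections for $\mu$-a.e.\ $\bq$, are routine applications of Fubini's theorem.
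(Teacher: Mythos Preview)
Your proposal is correct and follows essentially the same route as the paper's proof: both establish the coincidence with \eqref{eq:ext:MH:ker:diff} by expanding $\Tker((\bq,\bv),A\times\spv)$ from \eqref{Tker:tierney:ext} and invoking the $\cM$-a.e.\ identification $\arFn((\bq,\bv),S(\bq,\bv))=\harFn(\bq,\bv)$ coming from \eqref{ar:tierney:S:ext}, then pass from a $\cM$-null set to $\mu$-a.e.\ $\bq$ via Fubini. The only difference is expository: the paper dispatches the detailed-balance claim in a single sentence (``follows immediately from the fact that $\Tker$ satisfies detailed balance with respect to $\cM$''), whereas you spell out the lifting of $\varphi$ to $\tilde\varphi$ and the two marginalizations explicitly---a welcome clarification of exactly the bookkeeping you flag as the main obstacle.
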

\begin{proof}
  The statement that $P$ defined in \eqref{Tker:marg} satisfies detailed balance
  with respect to $\mu$ follows immediately from the fact that $\Tker$
  satisfies detailed balance with respect to $\cM$ defined in
  \eqref{def:ext:target}, as a consequence of
  \cref{thm:tierney:ext}. For the second claim, notice that for each
  fixed $\bq \in \spq$ and $A \in \Sigma_\spq$ we have according to
  the definition of $\Tker$ in \eqref{Tker:tierney:ext} that
\begin{align}\label{Tker:marg:1}
    P(\bq, A) 
    =& \int\limits_\spv \int\limits_{A \times \spv} \arFn((\bq,\bv),(\btq, \btv))
       \delta_{S(\bq, \bv)}(d\btq, d \btv) \Vker(\bq,d\bv) \notag\\
     &+ \int\limits_\spv \int\limits_{A \times \spv}
       \delta_{(\bq, \bv)}(d \btq, d \btv) \left[ 1 - \arFn((\bq, \bv), S(\bq, \bv))\right] \Vker(\bq,d\bv) 
    \eqqcolon \,\, (I_1) + (I_2).
\end{align}
For the first term, we write
\begin{align}
  (I_1) &=
          \int_\spv \int_{\spq \times \spv} \indFn{A\times \spv}(\btq,\btv)
          \arFn((\bq, \bv),(\btq, \btv)) \delta_{S(\bq, \bv)}(d\btq, d \btv) \Vker(\bq,d\bv) \notag\\
    &= \int_\spv \indFn{A\times \spv}(S(\bq, \bv)) \arFn((\bq, \bv),S(\bq, \bv)) \Vker(\bq,d\bv) \notag\\
    &= \int_\spv \int_A \delta_{\Pi_1 \circ S(\bq, \bv)}(d \btq) \arFn((\bq, \bv),S(\bq, \bv)) \Vker(\bq,d\bv),
\end{align}
while for the second term
\begin{align}\label{Tker:marg:2}
  (I_2) = \int_A \delta_\bq(d\btq)
  \int_\spv \left[ 1 - \arFn((\bq, \bv),S(\bq, \bv)) \right] \Vker(\bq,d\bv).
\end{align}
Moreover, it is not difficult to show that \eqref{ar:tierney:S:ext} implies
\[
    \frac{d \eta^\perp}{d \eta}((\bq, \bv), S(\bq, \bv)) = \frac{d S^*\cM}{d \cM}(\bq, \bv) 
    \quad \mbox{ for $\cM$-a.e. } (\bq, \bv) \in \spq \times \spv,
\]
so that 
\begin{align}\label{arFn:marg}
  \arFn((\bq, \bv), S(\bq, \bv)) = \harFn(\bq, \bv)
  \quad \mbox{ for $\cM$-a.e. } (\bq, \bv) \in \spq \times \spq,
\end{align}
for $\harFn$ as defined in \eqref{def:arFn:b}.

Consequently, from \eqref{Tker:marg:1}-\eqref{Tker:marg:2} and
\eqref{arFn:marg}, we conclude that for $\mu$-a.e. $\bq \in \spq$ and
every $A \in \Sigma_\spq$
\begin{align*}
    P(\bq, A) = \int_\spq \int_A \delta_{\Pi_1 \circ S(\bq, \bv)}(d \btq) \harFn(\bq, \bv) \Vker(\bq,d\bv)
    + \int_A \delta_\bq(d\btq) \int_\spq \left[ 1 - \harFn(\bq, \bv) \right] \Vker(\bq,d\bv),
\end{align*}
which coincides with the Markov kernel defined in
\eqref{eq:ext:MH:ker:diff}. This finishes the proof.
\end{proof}

\begin{Remark}
  In regard to the results of \cref{thm:tierney:ext} and
  \cref{cor:marg}, we notice that a slightly different approach to
  constructing a Markov transition kernel on a product space
  $\spq \times \spv$ is provided in \cite[Algorithm
  1]{fang2014compressible} (see also \cite[Section
  5.3]{bou2018geometric}), described as follows. Assume, similarly as
  in \cref{rmk:R:hatS}, that the mapping $\Smap$ from
  \cref{thm:tierney:ext} is given as $R \circ \hat{S}$, with
  $R, \hat{S} : \spq \times \spv \to \spq \times \spv$ being two
  measurable mappings such that $S = R \circ \hat{S}$ satisfies
  assumptions \ref{P1c}-\ref{P2c} from \cref{thm:gen:rev:new} and,
  additionally, that $R^*\cM = \cM$. Then, the Markov transition
  kernel resulting from \cite[Algorithm 1]{fang2014compressible} can
  be written as
\begin{align}\label{def:Tkerp}
  \Tker'(\bu, d \btu) = \arFn'(\bu) \delta_{\hat{S}(\bu)}(d \btu)
  + \delta_{R(\bu)}(d\btu) \left[ 1 - \arFn'(\bu)\right],
\end{align}
where
\begin{align}\label{def:alphap}
    \arFn'(\bu) = 1 \wedge \frac{d (\hat{S}^{-1})^* \cM}{d \cM}(\bu) \quad \mbox{ for } \bu \in \spq \times \spv,
\end{align}
with $\cM$ as defined in \eqref{def:ext:target}. Here assumptions
\ref{P1c}-\ref{P2c} together with $R^*\cM = \cM$ imply that the
Radon-Nikodym derivative in \eqref{def:alphap} is well-defined, since
\[
(\hat{S}^{-1})^*\cM = (S^{-1}\circ R)^* \cM = (S^{-1})^* R^* \cM = (S^{-1})^* \cM = S^* \cM.
\]
In fact, in the setting from \cite{fang2014compressible}, $\spq$ and
$\spv$ are taken as finite-dimensional spaces, and $\arFn'$ is written
in terms of the density of $\cM$ with respect to the corresponding
Lebesgue measure in $\spq \times \spv$.

Most importantly, we notice that in contrast to $\Tker$ defined in
\eqref{Tker:tierney:ext}, the transition kernel $\Tker'$ in
\eqref{def:Tkerp} does \emph{not} in general satisfy detailed balance
with respect to the probability measure $\cM$. Indeed, it is shown in
\cite[Section V]{fang2014compressible} that $\Tker'$ satisfies a
modified form of the standard detailed balance
condition. Nevertheless, if we assume in addition that $R$ is such
that $\Pi_1 \circ R = \Pi_1$ then by taking the marginalization of
$\Tker'$ as in \eqref{Tker:marg} it is not difficult to show that we
obtain a Markov kernel on $\spq$ that also coincides a.e. with the one
from \eqref{eq:ext:MH:ker:diff}, and thus satisfies detailed balance
with respect to the first marginal of $\cM$, i.e. $\mu = \Pi_1^*\cM$.
\end{Remark}

\section{Approximate Hamiltonian Monte Carlo methods}
\label{sec:approx:ham:methods}

This section derives and studies some `extended phase space' sampling
methods.  We first consider the classical finite dimensional setting
involving a continuously distributed target measure in
\cref{sec:ghm:main} leading to \cref{alg:gHMC:FD},
\cref{alg:gHMC:FD:1}, \cref{alg:gHMC:FD:2} and \cref{alg:gHMC:FD:3}.
We then turn to the infinite dimensional Gaussian-Hilbert space
framework in \cref{sec:HMC:inf:dim} culminating in \cref{alg:method}.

The developments presented here are ultimately based on an application
of \cref{thm:gen:rev:new} while drawing on an extended library of HMC
samplers sitting on the foundation of a rich variety of numerical
methods employed for the effective discretization of Hamiltonian
systems.  We provide an essentially self-contained presentation of
some elements of this extensive and disparate literature laying out a
toolbox which can be expanded upon to derive further algorithms in the
future.

On the one hand the results below can be seen as representing a class
of surrogate trajectory methods, reflective of a growing body of
literature; cf. \cite{bernardo1998regression,rasmussen2003gaussian,Li2008,neal2011mcmc,meeds2014gps,lan2016emulation,AFM2017,beskos2017geometric,zhang2017hamiltonian,zhang2017precomputing,li2019neural,radivojevic2020modified}. These works seek to
partially avoid expensive gradient computations dictated by classical
HMC formulations. In this connection it is notable that our methods
allow for a variety of gradient-free approximations where the dynamics
may not be symplectic but are merely volume-preserving (see
\cref{def:sym:Plec} below).  From a slightly different perspective we
may see the results in this section as providing a large `parameter
space' of possible samplers which include many popular and recently
discovered methods as important special cases.  We make explicit the
parameter choices connecting back to existing methods below in
\cref{sec:class:ex}, further illustrating and enriching the unifying
outlook provided by \cref{thm:gen:rev:new}.

\subsection{The finite-dimensional case}
\label{sec:ghm:main}

We begin with the classical setting where the target measure $\mu$
sits on $\RR^N$.  Recall that, in this finite-dimensional context, our
goal is to sample measures $\mu$, which are presumed to be
continuously distributed.  For convenience, we also assume that $\mu$
is strictly positive, but see \cref{rem:noabscont}.  Thus, we may
write $\mu$ in the potential form
\begin{align}
  \mu(d \bq) = \frac{1}{Z_\UPot}e^{-\UPot(\bq)}d \bq, \quad Z_\UPot
  = \int_{\RR^\fd} e^{-\UPot(\bq)}d \bq,
  \label{eq:trg:Msr:fnt:dim}
\end{align}
where $\UPot: \RR^\fd \to \RR$ is any measurable function such that
$e^{-\UPot(\bq)} \in L^1(\RR^\fd)$.  Typically we will additionally
suppose that $\UPot \in C^1$.  

In order to develop our results we need to introduce some elements
from the theory of geometric integration and from Hamiltonian
dynamical systems more broadly. We have tried to keep our discussion
here as elementary and as self-contained as possible but it includes a
number of results which are covered in much more detail and with a
much wider scope elsewhere.  See e.g.
\cite{neal2011mcmc,Hairer2006book,LeimkuhlerReich2004,bou2018geometric} and we refer to
\cite{marsden1995introduction,jose2000classical,arnol2013mathematical} for the broader context of Hamiltonian
systems.

\subsubsection{Hamiltonian and Surrogate Extended Phase Space Dynamics}
The starting point of the HMC approach involves selecting a
Hamiltonian function $\Ham: \RR^{2\fd} \to \RR$ such that the marginal
of the associated Gibbs measure
\begin{align}
\cM(d \bq, d \bv) 
:= \frac{1}{Z_\Ham} e^{-\Ham(\bq, \bv)} d \bq d\bv,
\quad Z_\Ham = \int_{\RR^{2\fd}} e^{-\Ham(\bq, \bv)} d \bq d\bv,
\label{eq:gibbs:Msr}
\end{align}
with respect to the ``position'' variable $\bq$ coincides with the
target measure $\mu$ defined in \eqref{eq:trg:Msr:fnt:dim}. As such,
it is natural to consider a Hamiltonian given by
\begin{align}\label{eq:Ham}
	\Ham(\bq, \bv) = \UPot(\bq) + \VPot(\bq,\bv) + \ln
  Z_\VPot(\bq),
  \quad Z_\VPot(\bq) \coloneqq \int_{\RR^\fd} e^{-\VPot(\bq,\bv)}d \bv
\end{align}
for some $C^1$ function $\VPot: \RR^{2\fd} \to \RR$ such that
$\bv \mapsto e^{-\VPot(\bq,\bv)} \in L^1(\RR^\fd)$, for each fixed
$\bq \in \RR^\fd$. Here the term $\ln Z_{\VPot}(\bq)$ is included
precisely to ensure that the marginal of $\cM$ with respect to $\bq$
coincides with $\mu$. Therefore, $\cM$ from \eqref{eq:gibbs:Msr} can
be written as
\begin{align}\label{eq:smp:Msr:fnt:dim}
  \cM(d\bq, d \bv) = \Vker(\bq, d \bv) \mu(d\bq),
  \quad \Vker(\bq, d \bv) = \frac{1}{Z_\VPot(\bq)} e^{-\VPot(\bq,\bv)}d \bv,
\end{align}
with $Z_{\Ham} = Z_{\UPot}$ from \eqref{eq:trg:Msr:fnt:dim}. Denoting by
$\cB(\RR^\fd)$ the $\sigma$-algebra of Borel sets in $\RR^\fd$, it
follows by construction that
$\Vker: \RR^\fd \times \cB(\RR^\fd) \to [0,1]$ is a Markov kernel, and
$\cM$ defines a probability measure on $\RR^{2\fd}$.

Note that typically one considers
\begin{align}
  \label{eq:typ:sample:pot}
  \VPot(\bq,\bv) = \frac{1}{2}\langle M(\bq)^{-1} \bv, \bv \rangle 
\end{align}
for an appropriately chosen symmetric positive definite ``mass
matrix'' $M$, so that $\VPot(\bq,\bv)$ corresponds to the negative
log-density of the $\RR^{\fd}$-valued gaussian distribution
$N(0,M(\bq))$. Classically, $M$ is $\bq$ independent and often simply
taken to be the identity (but see the infinite dimensional formulation
in \cref{sec:HMC:inf:dim} and \cref{subsec:app:Hilb} below).  On the
other hand, the Riemannian manifold HMC method introduced in
\cite{GirolamiCalderhead2011} considers cases where we introduce a
dependence on $\bq$ in $M$ in \eqref{eq:typ:sample:pot}, thus
providing an important motivating example for allowing
`position-dependence' in the formulation of the kinetic portion of the
Hamiltonian in \eqref{eq:Ham}.  Note also that `non-Gaussian' choices
for $\VPot$ are also relevant.  See, for example, the relativistic HMC
algorithm developed in \cite{lu2017relativistic}.  Both of these HMC
variants are briefly described in \cref{subsubsec:fd:HMC} below where
they are connected back to the generalized frameworks we consider
here.

Having determined $\Ham$ as in \eqref{eq:Ham} one now considers the
associated Hamiltonian dynamics for the pair
$\bqv = (\bq, \bv) \in \RR^{2\fd}$ as
\begin{align}
  \label{eq:Ham:dyn:comp:fm}
  \frac{d \bqv}{dt}  = J^{-1} \nabla \Ham(\bqv),
  \quad \bqv(0) = (\bq_0,\bv_0),
\end{align}
where $J$ is any $2 \fd \times 2 \fd$ real matrix which is
antisymmetric and invertible. Here the typical form for $J$ is
\begin{align}
  \label{eq:cls:J}
  J :=
  \begin{pmatrix}
    0&    -I\\
    I&     0
  \end{pmatrix}
\end{align}
but other ``non-canonical'' choices for $J$ are relevant for
sampling. For example, in the infinite-dimensional version of HMC
derived in \cite{Beskosetal2011}, which we consider in
\cref{sec:HMC:inf:dim}, \cref{subsubsec:pHMC} below, $J$ is used to
``precondition'' the dynamics, see \eqref{eq:non:con:form:J:PC}.
Other possibilities for $J$ are studied for example with the so called
magnetic HMC methods introduced in \cite{tripuraneni2017magnetic}.

Since $\Ham$ is invariant under the flow associated to
\eqref{eq:Ham:dyn:comp:fm}, the Gibbs measure $\cM$ given as
\eqref{eq:gibbs:Msr} is invariant with respect to this flow.  This
implies that the Markov transition kernel associated to the dynamics
of the $\bq$ variable in \eqref{eq:Ham:dyn:comp:fm}, given by
\begin{align}\label{Mkv:ker:exact:fd:HMC}
  P^t(\bq_0,A) \coloneqq \mathbb{P}(\bq(t;\bq_0,\bv_0) \in A),
  \quad \bv_0 \sim \Vker(\bq_0,\cdot),
\end{align}
defined for some $t \geq 0$, for all $\bq_0 \in \RR^{\fd}$ and any
Borel set $A \subset \RR^{\fd}$, holds the $\bq$-marginal of $\cM$,
i.e. the target $\mu$, as an invariant measure.  For a fixed
integration time $T > 0$, $P^T$ as given in
\eqref{Mkv:ker:exact:fd:HMC} defines the Markov kernel for what is
known as the exact HMC algorithm, which is of theoretical interest as
an idealization of HMC, \cite{glatt2020mixing, bou2020two}.  However
this $P^T$ is of much less practical significance because it is
typically impossible to exactly resolve the solution operator for
\eqref{eq:Ham:dyn:comp:fm}. Instead, one resorts to a skillfully
chosen numerical approximation $\hat{S}(\bq_0,\bv_0)$ for the solution
of \eqref{eq:Ham:dyn:comp:fm} at the time $T$ that is commensurate
with the setting of \cref{thm:gen:rev:new}.

In view of obtaining a wider class of HMC-like algorithms to
sample from $\mu$ in \eqref{eq:trg:Msr:fnt:dim}, we replace
\eqref{eq:Ham:dyn:comp:fm} with the following general dynamics:
\begin{align}\label{approx:dyn:fd}
  \frac{d\bq}{dt} = \fu(\bq, \bv), \quad
  \frac{d\bv}{dt} = \fv(\bq, \bv), \quad
  (\bq(0), \bv(0)) = (\bq_0, \bv_0),
\end{align}
for suitably regular functions $\fu: \RR^{2\fd} \to \RR^\fd$ and
$\fv: \RR^{2\fd} \to \RR^\fd$.  Here an underlying idea for
considering the more general dynamic \eqref{approx:dyn:fd} is that we
may aim to replace the right-hand side of \eqref{eq:Ham:dyn:comp:fm},
$J^{-1}\nabla \Ham(\bqv)$, with an artfully chosen approximation
$(\fu(\bq, \bv),\fv(\bq, \bv))$, one that is computationally cheaper
to evaluate while maintaining essential features of
$J^{-1}\nabla \Ham(\bqv)$.  Therefore \eqref{approx:dyn:fd} is the
starting point for a methodology to resolve the target $\mu$ with a
lower overall computational cost.  In this connection notice that, in
contrast to \eqref{eq:Ham:dyn:comp:fm}, the system
\eqref{approx:dyn:fd} for general functions $\fu$ and $\fv$ may not be
a Hamiltonian system, and also may not be expected to hold $\cM$ as an
invariant measure.

Below we illustrate some classes of MCMC algorithms resulting from
such $\fu, \fv$ which still preserve the target measure $\mu$ as
invariant, as long as the integrator $\hat{S}$ and accept-reject
function $\harFn$ are chosen appropriately. In \cref{prop:Ham:res:1}
below, we show how \cref{thm:gen:rev:new} can be used to derive an
appropriate accept-reject mechanism assuming certain natural
structural properties of the map $\hat{S}$. Then, in a series of
subsections, we introduce three classes of algorithms developed around
different considerations for approximating \eqref{eq:Ham:dyn:comp:fm}
with \eqref{approx:dyn:fd}; see \eqref{eq:apx:SD:1},
\eqref{eq:Ham:spl:1} and \eqref{eq:apx:SD:2} below.

\subsubsection{Case 0: General extended phase space methods}
Before introducing our first and most general algorithm class, let us
first recall some basic definitions from the theory of Hamiltonian
dynamical systems which we need here and below.

\begin{Definition}\label{def:sym:Plec}
  Let $\hat{S}: \RR^{2\fd} \to \RR^{2\fd}$ be a $C^1$ map.
  \begin{itemize}
  \item[(i)] Fix any linear invertible map
    $R: \RR^{2\fd} \to \RR^{2\fd}$. We say that $\hat{S}$ is
    \emph{reversible with respect to $R$} (or simply
    $R$-\emph{reversible}) if $\hat{S}$ is itself invertible and
    \begin{align}
      \label{eq:rev:mom:flip:inv}
      R \circ \hat{S} (\bqv) = \hat{S}^{-1} \circ R (\bqv)
    \end{align}
    for every $\bqv \in \RR^{2\fd}$.
  \item[(ii)] We say that $\hat{S}$ is \emph{symplectic}, with respect
    to an invertible (typically antisymmetric) matrix $J$, if
    \begin{align}
      \label{eq:symp:def}
      (\nabla \hat{S}(\bqv))^* J \nabla
      \hat{S}(\bqv) = J
    \end{align} for every $\bqv \in \RR^{2\fd}$, where
    $A^*$ denotes the conjugate transpose of a matrix $A$.\footnote{Equivalently, 
      one may consider the symplectic form $\Omega(\btz, \lbz) := \langle \btz, J \lbz \rangle$,
      for $\btz, \lbz \in \RR^{2 \fd}$
      and assert that $\hat{S}$ is symplectic if $\Omega$ is invariant under the 
      pullback by $\hat{S}$, i.e. we have that $\hat{S}^* \Omega= \Omega$.  Here
      recall that, in this context, $(\hat{S}^*\Omega)_{\bw}(\btz, \lbz)  
      := \Omega(\nabla \hat{S}(\bw)\btz, \nabla \hat{S}(\bw)\lbz)$, for
      $\btz, \lbz, \mathbf{w} \in \RR^{2 \fd}$.  We observe that, in the case where $J$ is 
      canonical, namely when $J$ is of the form \eqref{eq:cls:J}, then we may write 
      $\Omega = d\bq \wedge d \bv := \sum_{j=1}^\fd d q_j \wedge d v_j$
      with $d$ the exterior derivative and $\wedge$ the wedge product so 
      that $dq_j \wedge d v_j(\btz, \lbz) = \tilde{q}_j \bar{v}_j -  \bar{q}_j \tilde{v}_j$
      for $\btz = (\btq, \btv), \lbz = (\lbq, \lbv)$.
      See e.g. \cite{loring2011introduction} for basic definitions.  As identified
      in e.g. \cite[Chapter 4.1]{LeimkuhlerReich2004} we therefore have
      that $\hat{S}$ is symplectic with respect to the canonical
      form $\Omega$ when 
      $d \bq \wedge d\bv = d \lbq \wedge d\lbv$ where
      $(\bq,\bv) = \hat{S} (\lbq,\lbv)$.
    }
  \end{itemize}
\end{Definition}

Let us collect a few elementary properties of symplectic and
$R$-reversible mappings under \cref{def:sym:Plec} whose
proofs are immediate.
\begin{Lemma}
  \label{lem:cons:symp:rev}
  \mbox{}
  \begin{itemize}
  \item[(i)] Under \eqref{eq:rev:mom:flip:inv} it follows that if
    additionally $R$ is an involution, i.e. $R \circ R = I$, then
    $R \circ \hat{S}$ is an involution, i.e.
    \begin{align}
      \label{eq:Mom:flip:form}
      R \circ \hat{S} \circ R \circ \hat{S} (\bqv) = \bqv
    \end{align}
    for every $\bqv \in \RR^{2\fd}$.
  \item[(ii)] If $\hat{S}$ is symplectic, with respect to any
    invertible matrix $J$, then $\hat{S}$ is volume preserving in
    $\RR^{2\fd}$, namely
    \begin{align}
      \label{eq:vol:pres}
      | \det \nabla \hat{S}(\bqv)| = 1
    \end{align}
    for every $\bqv \in \RR^{2\fd}$.
  \item[(iii)] If $\hat{S}_1$, $\hat{S}_2$ are two symplectic
    mappings, then their composition $\hat{S}_1 \circ \hat{S}_2$ is
    also symplectic.  Similarly, under the weaker condition that
    $\hat{S}_1$, $\hat{S}_2$ are both volume preserving, \`a la
    \eqref{eq:vol:pres}, then so too $\hat{S}_1 \circ \hat{S}_2$ must
    be volume preserving.
  \end{itemize}
\end{Lemma}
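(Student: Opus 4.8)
The plan is to dispatch the three items independently, since each reduces to a one-line algebraic manipulation. For item (i), I would begin from the reversibility identity \eqref{eq:rev:mom:flip:inv}, read as the operator equation $R \circ \hat{S} = \hat{S}^{-1} \circ R$, and compose $R \circ \hat{S}$ with itself:
\begin{align*}
  (R \circ \hat{S}) \circ (R \circ \hat{S})
  = (\hat{S}^{-1} \circ R) \circ (R \circ \hat{S})
  = \hat{S}^{-1} \circ (R \circ R) \circ \hat{S}.
\end{align*}
Invoking the additional hypothesis $R \circ R = I$ collapses the middle factor, leaving $\hat{S}^{-1} \circ \hat{S} = I$, which is exactly \eqref{eq:Mom:flip:form}.

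For item (ii), I would simply take determinants on both sides of the symplecticity relation \eqref{eq:symp:def}. Because $\nabla \hat{S}(\bqv)$ is a real matrix, its conjugate transpose is its transpose and hence $\det (\nabla \hat{S}(\bqv))^* = \det \nabla \hat{S}(\bqv)$; multiplicativity of the determinant then gives $(\det \nabla \hat{S}(\bqv))^2 \det J = \det J$. Since $J$ is invertible we have $\det J \neq 0$, so dividing through forces $(\det \nabla \hat{S}(\bqv))^2 = 1$, i.e. $|\det \nabla \hat{S}(\bqv)| = 1$, which is \eqref{eq:vol:pres}.

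For item (iii), the symplectic claim rests on the chain rule $\nabla(\hat{S}_1 \circ \hat{S}_2)(\bqv) = \nabla \hat{S}_1(\hat{S}_2(\bqv)) \, \nabla \hat{S}_2(\bqv)$. Substituting this into the left-hand side of \eqref{eq:symp:def} written for $\hat{S}_1 \circ \hat{S}_2$, the inner factor $(\nabla \hat{S}_1)^* J \, \nabla \hat{S}_1$, evaluated at $\hat{S}_2(\bqv)$, reduces to $J$ by symplecticity of $\hat{S}_1$, after which the remaining outer factor $(\nabla \hat{S}_2)^* J \, \nabla \hat{S}_2$ reduces to $J$ by symplecticity of $\hat{S}_2$. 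The volume-preserving case is even more immediate: the same chain rule together with multiplicativity of determinants yields $|\det \nabla(\hat{S}_1 \circ \hat{S}_2)| = |\det \nabla \hat{S}_1| \, |\det \nabla \hat{S}_2| = 1$.

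Since every step reduces to an elementary composition, determinant, or chain-rule identity, I do not anticipate any genuine obstacle here; the statement is flagged in the text as having an immediate proof. The only points requiring a moment's care are the real-matrix remark in (ii), which permits replacing $\det (\nabla \hat{S})^*$ by $\det \nabla \hat{S}$, and the bookkeeping of evaluation points for $\nabla \hat{S}_1$ when applying the chain rule in (iii).
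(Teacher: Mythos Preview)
Your proposal is correct; the paper does not actually supply a proof for this lemma, instead introducing it with the remark that the ``proofs are immediate,'' and your elementary determinant/chain-rule/composition arguments are precisely the standard verifications the authors had in mind.
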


It is immediately clear that these two properties introduced in
\cref{def:sym:Plec} together with \cref{lem:cons:symp:rev} are
tailor-made for \cref{thm:gen:rev:new}.  See also
\cref{rmk:alg:Rn:Preliminary} and the identity \eqref{eq:Gen:HMC:AR}.
We formalize this as follows.
\begin{Proposition}\label{prop:Ham:res:1}
  Fix any $C^1$ potential functions $\UPot: \RR^\fd \to \RR$ and
  $\VPot: \RR^{2\fd} \to \RR$ so that we can define a probability
  measure $\mu(d\bq) = Z_\UPot^{-1} e^{-\UPot(\bq)} d\bq$ and a Markov
  kernel
  $\Vker(\bq, d \bv) = Z_\VPot(\bq)^{-1}e^{-\VPot(\bq,\bv)} d\bv$ as
  in \eqref{eq:trg:Msr:fnt:dim} and \eqref{eq:smp:Msr:fnt:dim},
  respectively. Consider the associated Hamiltonian
  $\Ham = \UPot + \VPot + \ln Z_\UPot$ as defined in \eqref{eq:Ham}.
  Let $\hat{S} : \RR^{2\fd} \to \RR^{2\fd}$ be a $C^1$ mapping which
  is reversible with respect to a linear involution $R$ as in
  \cref{def:sym:Plec}(i).  Then
  \begin{itemize}
  \item[(i)] the kernel $P$ defined as in \eqref{eq:ext:MH:ker:diff}
    with $S = R \circ \hat{S}$ and with $\harFn$ defined as
    \begin{align}\label{def:arFn:fin:dim}
      \harFn (\bq, \bv)
      = 1 \wedge \left[\exp( - \Ham(R \circ \hat{S}(\bq, \bv))
      + \Ham(\bq,\bv)) |\det \nabla \hat{S}(\bq, \bv)|
      \right]
    \end{align}
    satisfies detailed balance with respect to $\mu$ as in \eqref{eq:rev:mu:P:def}.
  \item[(ii)] If we furthermore assume that $\hat{S}$ is symplectic, a
    la \eqref{eq:symp:def}, or merely volume-preserving as in
    \eqref{eq:vol:pres} then $\harFn$ reduces to
    \begin{align}
	\harFn(\bq, \bv)
	= 1 \wedge \left[\exp( - \Ham(R \circ \hat{S}(\bq, \bv))
	+ \Ham(\bq,\bv))
	\right].
	\label{eq:ar:let:it:flip}
    \end{align}
  \item[(iii)] On the other hand, if we assume that $\Ham$ is
    invariant under $R$, namely
    \begin{align}
	\Ham(\bq,\bv) = \Ham\left( R(\bq,\bv)\right),
	\label{eq:Ham:R:inv}
    \end{align}
    then $\harFn$ becomes
    \begin{align}
	\harFn(\bq, \bv)
	= 1 \wedge \left[\exp( - \Ham( \hat{S}(\bq, \bv))
	+ \Ham(\bq,\bv)) |\det \nabla \hat{S}(\bq, \bv)|
	\right].
	\label{eq:AR:BR:SanSerb}
    \end{align}
  \item[(iv)] Finally if both $\hat{S}$ is volume-preserving and
    \eqref{eq:Ham:R:inv} holds we can take $\harFn$ as
	\begin{align}
	\harFn(\bq, \bv)
	= 1 \wedge \exp( - \Ham( \hat{S}(\bq, \bv)) + \Ham(\bq,\bv)) .
	\label{eq:AR:trad}
	\end{align}
	\end{itemize}
\end{Proposition}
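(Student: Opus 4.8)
The plan is to recognize the proposition as a direct specialization of \cref{thm:gen:rev:new} to the involution $S := R \circ \hat{S}$, so that everything reduces to one Radon-Nikodym computation followed by three algebraic simplifications. First I would verify the two hypotheses of \cref{thm:gen:rev:new}. Since $R$ is a linear involution and $\hat{S}$ is reversible with respect to $R$ in the sense of \eqref{eq:rev:mom:flip:inv}, \cref{lem:cons:symp:rev}(i) yields $S^2 = I$, which is assumption \ref{P1c}. For \ref{P2c}, combining \eqref{eq:Ham} with \eqref{eq:smp:Msr:fnt:dim} (cf. \eqref{eq:gibbs:Msr}) shows that $\cM$ has a strictly positive Lebesgue density proportional to $e^{-\Ham(\bq,\bv)}$ on $\RR^{2\fd}$; since $S$ is a $C^1$ diffeomorphism (it is $C^1$ and equal to its own inverse), the pushforward $S^*\cM$ is likewise mutually absolutely continuous with Lebesgue measure, so in particular $S^*\cM \ll \cM$.

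The core of part (i) is to evaluate $dS^*\cM/d\cM$. Writing $\rho$ for the density of $\cM$ and using that $S$ is its own inverse, the change-of-variables formula \eqref{eq:push:forward:den} gives $S^*\cM(d\bq,d\bv) = \rho(S(\bq,\bv))\,|\det \nabla S(\bq,\bv)|\,d\bq\,d\bv$, whence by \eqref{eq:RN:Der:form}
\begin{align*}
  \frac{dS^*\cM}{d\cM}(\bq,\bv)
  = \frac{\rho(S(\bq,\bv))}{\rho(\bq,\bv)}\,|\det \nabla S(\bq,\bv)|
  = e^{\Ham(\bq,\bv) - \Ham(S(\bq,\bv))}\,|\det \nabla S(\bq,\bv)|.
\end{align*}
To simplify the Jacobian I would use that $R$ is linear, so that $\nabla S(\bq,\bv) = R\,\nabla\hat{S}(\bq,\bv)$, and that a linear involution satisfies $\det R \in \{+1,-1\}$, giving $|\det \nabla S| = |\det \nabla \hat{S}|$. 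Substituting $S = R \circ \hat{S}$ then reproduces \eqref{def:arFn:fin:dim} after taking $1 \wedge (\cdot)$, and detailed balance follows from \cref{thm:gen:rev:new}.

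Parts (ii)--(iv) are immediate reductions of this formula. For (ii), symplecticity forces volume preservation via \cref{lem:cons:symp:rev}(ii), and in either the symplectic or merely volume-preserving case \eqref{eq:vol:pres} gives $|\det \nabla \hat{S}| \equiv 1$, so the Jacobian factor drops out and we recover \eqref{eq:ar:let:it:flip}. For (iii), applying the $R$-invariance \eqref{eq:Ham:R:inv} at the point $\hat{S}(\bq,\bv)$ yields $\Ham(R \circ \hat{S}(\bq,\bv)) = \Ham(\hat{S}(\bq,\bv))$, which turns \eqref{def:arFn:fin:dim} into \eqref{eq:AR:BR:SanSerb}; part (iv) is simply the conjunction of both simplifications and gives \eqref{eq:AR:trad}.

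There is no substantial obstacle here: the proposition is essentially a translation of \cref{thm:gen:rev:new} into the language of finite-dimensional Hamiltonian dynamics, with the normalizing constant of $\cM$ cancelling in the Radon-Nikodym ratio. The only points demanding care are confirming $|\det R| = 1$ for the linear involution $R$ and consistently exploiting $S^{-1} = S$ when applying \eqref{eq:push:forward:den}; I expect the Jacobian bookkeeping to be the most error-prone, though entirely elementary, step.
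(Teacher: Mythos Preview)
Your proposal is correct and follows essentially the same approach as the paper: verify \ref{P1c} via \cref{lem:cons:symp:rev}(i), note the strictly positive Lebesgue density of $\cM$ for \ref{P2c}, compute $dS^*\cM/d\cM$ via \eqref{eq:push:forward:den} and \eqref{eq:RN:Der:form} using $S^{-1}=S$, simplify the Jacobian with $|\det R|=1$, and read off (ii)--(iv) as immediate reductions. The paper's proof is organized identically, with only cosmetic differences in how the chain rule for $\nabla S$ is written.
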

\begin{proof}
  Let $\cM$ be the probability measure on $\RR^{2\fd}$ defined as in
  \eqref{eq:gibbs:Msr}, \eqref{eq:smp:Msr:fnt:dim}, namely
  $\cM(d\bq, d \bv) = \Vker(\bq, d \bv)$
  $\mu(d\bq) = Z_{\Ham}^{-1} e^{- \Ham(\bq, \bv)} d \bq d \bv$. Since
  $\cM$ has a strictly positive density with respect to the Lebesgue
  measure and, by \cref{lem:cons:symp:rev} (i), $S = R \circ \hat{S}$
  is an involution, then the result follows directly from
  \cref{thm:gen:rev:new} once we compute $dS^*\cM/d \cM$.  Here we use
  \eqref{eq:push:forward:den} and \eqref{eq:RN:Der:form} to obtain
  that
  \begin{align}\label{RN:SM:M:fd}
    \frac{d S^* \cM}{d \cM}(\bq, \bv)
    &= \exp \left( - \Ham(S^{-1}(\bq, \bv)) + \Ham(\bq, \bv)\right)
            |\det \nabla S^{-1} (\bq, \bv)| \notag \\
    &= \exp \left( - \Ham(S(\bq, \bv)) + \Ham(\bq, \bv) \right)
      |\det \nabla R(\hat{S}(\bq, \bv))| |\det \nabla \hat{S}(\bq, \bv)|.
  \end{align}
  Since $R$ is a linear involution then clearly
  $|\det \nabla R(\bqv)| = 1$ for any $\bqv \in \RR^{2\fd}$, so that
  \eqref{def:arFn:fin:dim} follows from \eqref{def:arFn:b} and
  \eqref{RN:SM:M:fd}. Now \eqref{eq:ar:let:it:flip} follows
  immediately from \cref{lem:cons:symp:rev} (ii), and the remaining
  identities \eqref{eq:AR:BR:SanSerb} and \eqref{eq:AR:trad} are
  clear, completing the proof.
\end{proof}

We summarize the algorithm resulting from \cref{prop:Ham:res:1} as
follows.
\begin{algorithm}[H]
  \caption{(Generic Extended Phase Space Algorithm to sample from
    $\mu(d\bq) = Z_\UPot^{-1} e^{-\UPot(\bq)} d\bq$)}
  \begin{algorithmic}[1]\label{alg:gHMC:FD}
    \State Select the algorithm parameters:
    \begin{itemize}
    \item[(i)] The momentum kernel
      $\Vker(\bq,d \bv) = Z_\VPot(\bq)^{-1} e^{-\VPot(\bq,\bv)} d \bv
      \in \Pr(\RR^\fd)$, for each fixed $\bq \in \RR^{\fd}$.
    \item[(ii)] The integrator $\Sol$, and the linear involution $R$
      such that $\Sol$ is $R$-reversible (see \eqref{eq:rev:mom:flip:inv},
      \eqref{eq:Mom:flip:form}).
    \end{itemize}
    \State Choose $\bq_0 \in \RR^\fd$.
    \For{$k \geq 0$}
    \State Sample $\bv_k \sim \Vker(\bq_k, \cdot)$.
    \State Propose $\lbq_{k+1} := \Projq \circ R \circ \Sol(\bq_k,\bv_k)$, where
    $\Projq(\bq,\bv) = \bq$.
    \State Set $\bq_{k+1} :=\lbq_{k+1}$ with probability $\harFn(\bq_{k}, \bv_{k})$
    for $\harFn$ given by \eqref{def:arFn:fin:dim}, and otherwise take
    $\bq_{k+1} := \bq_{k}$.
          \EndFor
	\end{algorithmic}
\end{algorithm}

\subsubsection{Algebraic Considerations
  for Numerical Splittings}
With the above formulations in place we next present some approaches
to constructing suitable numerical integrators $\hat{S}$ from the
general system \eqref{approx:dyn:fd} that are commensurate with the
setting of \cref{prop:Ham:res:1}.  Here the reversibility condition
\eqref{eq:rev:mom:flip:inv} leading to \eqref{eq:Mom:flip:form} is
indispensable.  Our starting point for constructing such reversible
integrators out of a well chosen discretization of
\eqref{approx:dyn:fd} are dictated by the following basic algebraic
observations which will be useful here and again further on in
\cref{sec:HMC:inf:dim}:
\begin{Lemma}\label{prop:inv:R:S}
  Let $\cR: \cX \to \cX$ be a mapping on a set $\cX$.
  \begin{itemize}
  \item[(i)] Suppose that $\cSS_j, \bar{\cSS}_j: \cX \to \cX$ are such
    that $\cR \circ \cSS_j = \bar{\cSS}_j \circ \cR$ for
    $j = 1, \ldots n$.  Then, taking $\cSS := \cSS_1 \circ \cdots \circ
    \cSS_n$ and $\bar{\cSS} := \bar{\cSS}_1 \circ \cdots \circ \bar{\cSS}_n$,
    we have that $\cR \circ \cSS = \bar{\cSS} \circ \cR$.
  \item[(ii)] If $\cS_j: \cX \to \cX$, $j = 1, \ldots, n$, are
    invertible mappings satisfying
    $\cR \circ \cS_j = \cS_j^{-1} \circ \cR$, then the ``palindromic''
    composition $\cS$ defined as
    \begin{align}\label{eq:palandromic}
      \cS \coloneqq
      \cS_1 \circ \cS_2 \circ \cdots \circ \cS_{n-1}
      \circ \cS_n \circ \cS_{n-1} \circ \cdots \circ \cS_2 \circ \cS_1
    \end{align}
    satisfies $\cR \circ \cS = \cS^{-1} \circ \cR$.
	\end{itemize}
\end{Lemma}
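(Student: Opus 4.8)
The plan is to treat both parts as purely formal manipulations of compositions, requiring no structure on $\cX$ or on the maps beyond what is stated. First I would dispatch part (i) by a telescoping induction on $n$, pushing $\cR$ through the composition $\cSS = \cSS_1 \circ \cdots \circ \cSS_n$ one factor at a time from the left. Using the hypothesis $\cR \circ \cSS_j = \bar{\cSS}_j \circ \cR$, I would replace $\cR \circ \cSS_1$ by $\bar{\cSS}_1 \circ \cR$, then commute $\cR$ past $\cSS_2$ to obtain $\bar{\cSS}_1 \circ \bar{\cSS}_2 \circ \cR$, and iterate, arriving after $n$ steps at $\bar{\cSS}_1 \circ \cdots \circ \bar{\cSS}_n \circ \cR = \bar{\cSS} \circ \cR$. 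Equivalently one can phrase this as an induction whose inductive step applies the base hypothesis to the outermost remaining factor; either way the argument is immediate and uses only associativity of composition.

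For part (ii) I would view the palindromic composition \eqref{eq:palandromic} as a composition of $2n - 1$ factors drawn, in order, from the list $\cS_1, \ldots, \cS_{n-1}, \cS_n, \cS_{n-1}, \ldots, \cS_1$. Each factor $\cS_j$ satisfies $\cR \circ \cS_j = \cS_j^{-1} \circ \cR$, which is exactly the hypothesis of part (i) with $\bar{\cSS}_j$ taken to be the inverse of the corresponding factor. Applying part (i) therefore yields $\cR \circ \cS = \bar{\cS} \circ \cR$, where $\bar{\cS}$ is the composition, in the same order, of the inverses of those factors, namely
\[
  \bar{\cS} = \cS_1^{-1} \circ \cS_2^{-1} \circ \cdots \circ \cS_{n-1}^{-1} \circ \cS_n^{-1} \circ \cS_{n-1}^{-1} \circ \cdots \circ \cS_2^{-1} \circ \cS_1^{-1}.
\]

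The only step requiring a moment's thought — and thus the main, if modest, obstacle — is to verify that this $\bar{\cS}$ equals $\cS^{-1}$. I would compute $\cS^{-1}$ from the standard rule that the inverse of a composition reverses the order of the factors while inverting each one. The crucial observation is that the list of factors defining $\cS$ is a \emph{palindrome}, i.e. its $k$-th factor coincides with its $(2n-k)$-th factor; hence reversing the order of the factors leaves the list unchanged, and inverting each factor then reproduces precisely $\bar{\cS}$. This gives $\cS^{-1} = \bar{\cS}$, and combining with the identity from part (i) delivers $\cR \circ \cS = \cS^{-1} \circ \cR$.

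I would close by emphasizing that this palindromic symmetry is exactly the structural feature being exploited: for a non-palindromic composition, part (i) would still intertwine $\cR$ with some composition of inverses, but that composition would not in general coincide with $\cS^{-1}$. This is precisely why the reversibility relation \eqref{eq:rev:mom:flip:inv} is preserved by Strang-type (palindromic) splittings while failing for arbitrary orderings, which is the property these algebraic observations are designed to feed into \cref{prop:Ham:res:1} and the infinite-dimensional construction of \cref{sec:HMC:inf:dim}.
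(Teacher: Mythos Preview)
Your proof is correct and follows essentially the same approach as the paper: part (i) is dispatched by the obvious telescoping argument, and part (ii) is reduced to part (i) by observing that the palindromic structure of $\cS$ forces $\cS^{-1}$ to have the same palindromic form with each factor replaced by its inverse. The paper simply declares (i) obvious and for (ii) writes out $\cS^{-1}$ explicitly before invoking (i), which is precisely what you do.
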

\begin{proof}
  The first item, (i), is obvious.  For (ii) notice that, due to the
  palindromic structure of $\cS$, its inverse $\cS^{-1}$ maintains the
  same palindromic structure, with
  \begin{align*}
    \cS_1^{-1} \circ \cS_2^{-1} \circ \cdots \circ
    \cS_{n-1}^{-1} \circ \cS_n^{-1} \circ \cS_{n-1}^{-1}
    \circ \cdots \circ \cS_2^{-1} \circ \cS_1^{-1} = \cS^{-1},
  \end{align*}
  so that $\cR \circ \cS = \cS^{-1} \circ \cR$ follows directly from item (i).
\end{proof}

Next let us introduce some definitions.
\begin{Definition}\label{def:adj:num:op}
  Fix $\dT_0 > 0$ and suppose that for each $\dT \in (-\dT_0, \dT_0)$
  we have an invertible map $\chS_\dT: \RR^{2\fd} \to \RR^{2 \fd}$.
  \begin{itemize}
  \item[(i)] We define the \emph{adjoint} of $\chS_\dT$, denoted as
    $\chS_\dT^*$, according to $\chS_{\dT}^* \coloneqq (\chS_{-\dT})^{-1}$.
  \item[(ii)] We say $\chS_\dT$ is \emph{symmetric} (or
     self-adjoint) if $\chS_{\dT}^* = \chS_{\dT}$.
  \end{itemize}
\end{Definition}
\noindent The following desirable properties around this adjoint
operation are immediate:
\begin{Lemma}\label{lem:adj:props}
  \mbox{}
  \begin{itemize}
    \item[(i)]   Given maps collections of invertible maps $\chS, \chT$ as in
      \cref{def:adj:num:op} we have $\chS^{**} = \chS$ and also
      $(\chS\chT)^* = \chT^* \chS^*$.
      In particular $\chS^* \chS$ is symmetric.
    \item[(ii)] If $\chS_\dT$ is symmetric for some $\dT \in \RR$
      then $\chS_\dT^{-1} = \chS_{-\dT}$.
  \end{itemize}
\end{Lemma}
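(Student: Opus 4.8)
The plan is to derive all three assertions directly from the single defining relation $\chS_\dT^* = (\chS_{-\dT})^{-1}$, interpreting the product of two families parameter-wise, i.e. $(\chS\chT)_\dT := \chS_\dT \circ \chT_\dT$. Under this convention each claim reduces to careful bookkeeping of two operations that commute but are order-sensitive: negating the parameter $\dT \mapsto -\dT$ and taking the pointwise inverse.

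First I would establish the involutivity $\chS^{**} = \chS$. Unwinding the definition twice gives $\chS_\dT^{**} = (\chS^*_{-\dT})^{-1} = \big( (\chS_{\dT})^{-1} \big)^{-1} = \chS_\dT$, where the middle equality uses $\chS^*_{-\dT} = (\chS_{-(-\dT)})^{-1} = (\chS_\dT)^{-1}$. Next, for the order-reversal under composition, I would compute $(\chS\chT)_\dT^* = \big( (\chS\chT)_{-\dT}\big)^{-1} = (\chS_{-\dT}\circ \chT_{-\dT})^{-1} = \chT_{-\dT}^{-1} \circ \chS_{-\dT}^{-1} = \chT_\dT^* \circ \chS_\dT^*$, using that the inverse of a composition reverses the order of factors. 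Combining these two facts, $\chS^*\chS$ is symmetric because $(\chS^*\chS)^* = \chS^* (\chS^*)^* = \chS^* \chS^{**} = \chS^*\chS$.

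For part (ii) the claim is immediate from the definition: symmetry $\chS_\dT^* = \chS_\dT$ means $(\chS_{-\dT})^{-1} = \chS_\dT$, and inverting both sides yields $\chS_\dT^{-1} = \chS_{-\dT}$.

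I do not anticipate any genuine obstacle here; as the surrounding text already signals, these proofs are immediate. The only point requiring care is the order-reversal in $(\chS\chT)^* = \chT^*\chS^*$ --- exactly as for the adjoint or transpose in linear algebra --- together with the consistent tracking of the sign flip $\dT \mapsto -\dT$ paired with the pointwise inverse, so that the two operations are applied in the correct sequence at each step.
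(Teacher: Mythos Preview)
Your proposal is correct and matches the paper's approach: the paper states that these properties are ``immediate'' and gives no explicit proof, and your argument is precisely the direct unwinding of the definition $\chS_\dT^* = (\chS_{-\dT})^{-1}$ that this wording presumes. The only (minor) interpretive choice you made---taking $(\chS\chT)_\dT = \chS_\dT \circ \chT_\dT$---is the natural one in context and is what is needed for the applications in the surrounding section.
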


We now have everything in hand to implement algorithms around the
dynamics \eqref{approx:dyn:fd} discretized in a suitable form to apply
\cref{prop:Ham:res:1} in three specific cases.

\subsubsection{Case 1: Separable Surrogate Dynamics}
\label{ssec:sep:dyn}

One particular case of \eqref{approx:dyn:fd} we consider is when $\fu$
depends only on $\bv$ and $\fv$ depends only on $\bq$. Namely, in this
case, \eqref{approx:dyn:fd} reduces to
\begin{align}\label{sep:dyn}
	\frac{d\bq}{dt} = \fu(\bv), 
	\quad \frac{d\bv}{dt} = \fv(\bq),
	\quad  (\bq(0), \bv(0)) = (\bq_0, \bv_0),
\end{align}
for some $C^1$ functions $\fu, \fv: \RR^{\fd} \to \RR^\fd$.    Here we have 
in mind the situation where our Hamiltonian $\Ham$ and matrix
$J$ in \eqref{eq:Ham:dyn:comp:fm} in have the separable form
\begin{align}\label{Ham:sep:fd}
	\Ham (\bq,\bv) := \UPot(\bq) + \VPot(\bv),
       \qquad \text{ and } \quad
       J \coloneqq 
	\begin{pmatrix}
		0 & - A \\
		A^* & 0
	\end{pmatrix},
\end{align}
for some invertible matrix $A \in \RR^{\fd \times \fd}$, so 
that $\fu$ and $\fv$ would serve as suitable approximations
of the form
\begin{align}
	\label{eq:apx:SD:1}
	\fu(\bv) \approx (A^*)^{-1} \nabla \VPot(\bv),  
	\qquad  
	\fv(\bq) \approx -A^{-1}  \nabla \UPot(\bq).
\end{align}

In this case, \eqref{sep:dyn} we may consider the classical
\emph{leapfrog} integrator.  This scheme is defined by splitting the
dynamics \eqref{sep:dyn} into
\begin{align}\label{split:dyn:fd}
	\frac{d\bq}{dt} = 0, \quad \frac{d\bv}{dt} = \fv(\bq), \quad 
	\mbox{ and } \quad \frac{d\bq}{dt} = \fu(\bv), \quad \frac{d\bv}{dt} =0,
\end{align}
with analytical solutions given explicitly for any time $t$ and 
initial datum $(\bq_0, \bv_0) \in  \RR^{2\fd}$ as 
\begin{align}\label{def:Xi1:Xi2:fd}
	\Xi_t^{(1)}(\bq_0,\bv_0) = (\bq_0, \bv_0 + t \fv(\bq_0)), 
	\quad \mbox{ and } \quad 
	\Xi_t^{(2)}(\bq_0, \bv_0) = (\bq_0 + t \fu(\bv_0), \bv_0),
\end{align}
respectively. Given time steps $\da, \db > 0$ and a number of
iterations $n \in \NN$, a leapfrog-type integrator for \eqref{sep:dyn}
is then defined according to the following Strang splitting
\begin{align}\label{def:leapfrog:int}
  \cSol(\bq_0, \bv_0) 
  = \cSol_{n,\da, \db} (\bq_0, \bv_0) 
  \coloneqq \left(  \Xi^{(1)}_{\da} \circ \Xi^{(2)}_{\db}
                   \circ \Xi^{(1)}_{\da} \right)^n (\bq_0, \bv_0),
  \qquad \mbox{ for all } (\bq_0, \bv_0) \in \RR^{2\fd}.
\end{align}

Under minimal condition on $\fu, \fv$ we have the following result
placing the map \eqref{def:leapfrog:int} defined from \eqref{sep:dyn}
in the setting of \cref{prop:Ham:res:1}:

\begin{Theorem}\label{cor:sep:dyn}
  Consider the dynamics in \eqref{split:dyn:fd}, resolved as
  \eqref{def:Xi1:Xi2:fd}, for any given $C^1$ functions
  $\fu,\fv: \RR^{\fd} \to \RR^{\fd}$.  For any time steps
  $\da, \db > 0$ and any number of iterations $n \in \NN$, we take
  $\cSol_{n,\da, \db}$ to be the integrator defined in
  \eqref{def:leapfrog:int}.  Then
\begin{itemize}
\item[(i)] $\cSol_{n,\da, \db}$ is volume-preserving, i.e.
  $|\det \nabla \cSol_{n,\da, \db}(\bz)| = 1$ for all
  $\bz \in \RR^{2\fd}$.
\item[(ii)]  If we furthermore 
assume that
\begin{align}
	\label{eq:Mom:flp:rv:smp}
	\fu(-\bv) = - \fu(\bv) \qquad \text{ for every } \bv \in \RR^\fd
\end{align}
then $\cSol_{n,\da, \db}$ is reversible with respect to the momentum
flip involution $R$ in the sense of \cref{def:sym:Plec},
\eqref{eq:rev:mom:flip:inv}.  Here $R$ is given by
\begin{align}\label{def:flip:R}
      R(\bq, \bv) = (\bq, - \bv), \quad
      (\bq, \bv) \in
 \RR^{2\fd}.
    \end{align}
\end{itemize}
\end{Theorem}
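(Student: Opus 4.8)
The plan is to establish both properties at the level of the two elementary shear maps $\Xi^{(1)}_t$, $\Xi^{(2)}_t$ from \eqref{def:Xi1:Xi2:fd} and then to assemble the conclusions for the full integrator $\cSol_{n,\da,\db}$ via the composition principles already recorded in \cref{lem:cons:symp:rev}(iii) and \cref{prop:inv:R:S}(ii). For (i), I would first compute the Jacobians of the building blocks: since $\Xi^{(1)}_t(\bq,\bv) = (\bq, \bv + t\fv(\bq))$ is a vertical shear its derivative is block lower-triangular with identity diagonal blocks, and $\Xi^{(2)}_t(\bq,\bv) = (\bq + t\fu(\bv), \bv)$ is correspondingly block upper-triangular with identity diagonal blocks. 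Hence $|\det \nabla \Xi^{(1)}_t| = |\det \nabla \Xi^{(2)}_t| = 1$ identically, so each $\Xi^{(i)}_t$ is volume preserving. As $\cSol_{n,\da,\db}$ is a finite composition of such maps, \cref{lem:cons:symp:rev}(iii) immediately gives that it too is volume preserving. Observe that this part uses nothing beyond $\fu,\fv \in C^1$ and, in particular, does not require \eqref{eq:Mom:flp:rv:smp}.

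For (ii) I would first note that each elementary map is invertible with $(\Xi^{(1)}_t)^{-1} = \Xi^{(1)}_{-t}$ and $(\Xi^{(2)}_t)^{-1} = \Xi^{(2)}_{-t}$, and then check the reversibility relation $R \circ \Xi^{(i)}_t = (\Xi^{(i)}_t)^{-1} \circ R$ for the momentum-flip $R(\bq,\bv) = (\bq,-\bv)$ of \eqref{def:flip:R}. A direct evaluation shows that the relation for $\Xi^{(1)}_t$ holds unconditionally, since both sides send $(\bq,\bv)$ to $(\bq, -\bv - t\fv(\bq))$. For $\Xi^{(2)}_t$, the two sides evaluate to $R \circ \Xi^{(2)}_t(\bq,\bv) = (\bq + t\fu(\bv), -\bv)$ and $(\Xi^{(2)}_t)^{-1}\circ R(\bq,\bv) = (\bq - t\fu(-\bv), -\bv)$, so the relation is equivalent to $\fu(-\bv) = -\fu(\bv)$, which is exactly the oddness hypothesis \eqref{eq:Mom:flp:rv:smp}. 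Isolating precisely where this hypothesis enters is the one genuine structural point of the argument; everything surrounding it is routine bookkeeping.

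With both building blocks shown to be $R$-reversible, I would apply \cref{prop:inv:R:S}(ii) to the palindromic single leapfrog step $\Psi := \Xi^{(1)}_{\da} \circ \Xi^{(2)}_{\db} \circ \Xi^{(1)}_{\da}$, taking $\cR = R$, $\cS_1 = \Xi^{(1)}_{\da}$, $\cS_2 = \Xi^{(2)}_{\db}$ and $n = 2$, to conclude $R \circ \Psi = \Psi^{-1} \circ R$. A short induction then propagates this to all powers: assuming $R \circ \Psi^k = \Psi^{-k} \circ R$, one has $R \circ \Psi^{k+1} = (R \circ \Psi^k) \circ \Psi = \Psi^{-k} \circ (R \circ \Psi) = \Psi^{-k} \circ \Psi^{-1} \circ R = \Psi^{-(k+1)} \circ R$. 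Since $\cSol_{n,\da,\db} = \Psi^n$ by \eqref{def:leapfrog:int}, this yields $R \circ \cSol_{n,\da,\db} = \cSol_{n,\da,\db}^{-1} \circ R$, which is precisely the $R$-reversibility \eqref{eq:rev:mom:flip:inv}, completing the plan. Equivalently, after merging adjacent factors via $\Xi^{(1)}_{\da} \circ \Xi^{(1)}_{\da} = \Xi^{(1)}_{2\da}$ the full map $\cSol_{n,\da,\db}$ is itself palindromic in the building blocks, so one could instead invoke \cref{prop:inv:R:S}(ii) a single time rather than induct; I expect the only real care needed is in setting up this palindromic indexing correctly.
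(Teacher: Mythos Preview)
Your proposal is correct and follows essentially the same approach as the paper: both arguments compute the block-triangular Jacobians of $\Xi^{(1)}_t,\Xi^{(2)}_t$ to get volume preservation, verify $R\circ\Xi^{(j)}_t=\Xi^{(j)}_{-t}\circ R$ directly (isolating the oddness hypothesis \eqref{eq:Mom:flp:rv:smp} at $\Xi^{(2)}$), and then invoke the palindromic composition principle \cref{prop:inv:R:S}(ii). The paper applies that lemma once to the full palindromic expansion of $\cSol_{n,\da,\db}$ rather than first treating a single step and then inducting, but you already note this alternative yourself.
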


\begin{proof}
  Start with the first item, (i).  From \eqref{def:Xi1:Xi2:fd} it
  follows that
  \begin{align*}
    \nabla \Xi^{(1)}_t(\bq, \bv) =
    \begin{pmatrix}
      I & 0 \\
      t \nabla \fv(\bq) & I
    \end{pmatrix},
     \quad \mbox{ and }
     \quad \nabla \Xi^{(2)}_t(\bq,\bv) =
	\begin{pmatrix}
		I & t \nabla \fu(\bv) \\
		0 & I
	\end{pmatrix},
\end{align*}
so that, clearly,
$|\det \nabla \Xi^{(1)}_t(\bz)| = |\det \nabla \Xi^{(2)}_t(\bz)| = 1$
for every $\bz \in \RR^{2\fd}$ and each $t \geq 0$.  From
\eqref{def:leapfrog:int} it thus follows that
$|\det \nabla \cSol_{n,\da, \db}(\bz)| = 1$ for all $\bz \in \RR^{2\fd}$.
Regarding the reversibility claim in item (ii) we observe that
$\Xi^{(j)}$ is symmetric in the sense of \cref{def:adj:num:op}, (ii)
for $j = 1,2$.  On the other hand it is also direct to check that
$R \Xi^{(j)}_t = \Xi^{(j)}_{-t} R$ for any $t \geq 0$ and $j =1,2$.
Here note the use of the condition \eqref{eq:Mom:flp:rv:smp} for
$\Xi^{(2)}$.  Thus with \cref{lem:adj:props}, (ii) and
\cref{prop:inv:R:S}, (ii), noting the palindromic structure in
\eqref{def:leapfrog:int}, we obtain the desired reversibility claim for
$\cSol_{n,\da, \db}$, completing the proof.
\end{proof}

We summarize and present the algorithm directly resulting from
\cref{cor:sep:dyn} and \cref{prop:Ham:res:1}, (ii).  As we make
explicit in \cref{sec:con:class:alg:fd} below this algorithm includes
the classical Random Walk Monte Carlo (RWMC), Metropolis Adjusted
Langevin (MALA), and Hamiltonian Monte Carlo (HMC) algorithms as
special cases under suitable parameter choices.

\begin{algorithm}[H]
  \caption{(Generalized Leapfrog for surrogate dynamics to sample
    $\mu(d\bq) = Z_\UPot^{-1} e^{-\UPot(\bq)} d\bq$)}
\begin{algorithmic}[1]\label{alg:gHMC:FD:1}
  \State Select the algorithm parameters:
  \begin{itemize}
  \item[(i)] The proposal distribution
     $\Vker(\bq, d \bv) = Z_\VPot(\bq)^{-1} e^{-\VPot(\bq,\bv)} d \bv \in
    \Pr(\RR^\fd)$ for each $\bq \in \RR^\fd$.
  \item[(ii)] The surrogate functions $\fu, \fv: \RR^\fd \to \RR^\fd$
    defining $\Xi_t^{(1)}(\bq,\bv) := (\bq, \bv + t \fv(\bq))$, and
    $\Xi_t^{(2)}(\bq, \bv) := (\bq + t \fu(\bv), \bv)$, where $\fu$
    satisfies $\fu(-\bv) = -\fu(\bv)$ for all $\bv \in \RR^{\fd}$.
  \item[(iii)] The time step sizes $\delta_1, \delta_2 > 0$.
  \item[(iv)] The number of iterations $n$.
  \end{itemize}
   \State Choose $\bq_0 \in \RR^\fd$.
    \For{$k \geq 0$}
        \State Sample $\bv_k \sim \Vker(\bq_k, d \bv)$.
        \State Propose
        $(\lbq_{k+1}, \lbv_{k+1}) :=
        \left(  \Xi^{(1)}_{\da} \circ \Xi^{(2)}_{\db} \circ \Xi^{(1)}_{\da} \right)^n (\bq_k, \bv_k)$.
	\State Set $\bq_{k+1} :=\lbq_{k+1}$ with probability
        $1 \wedge \left[\exp( - \Ham(\lbq_{k+1}, -\lbv_{k+1})
          + \Ham(\bq_k,\bv_k)) \right]$,
        otherwise take $\bq_{k+1} := \bq_{k}$.
    \EndFor
\end{algorithmic}
\end{algorithm}

\begin{Remark}
  \cref{alg:gHMC:FD:1} derived from \eqref{sep:dyn} can be further
  generalized in several directions.  For example
  \eqref{def:leapfrog:int} can be replaced with any palindromic
  splitting involving the maps $\Xi^{(1)}, \Xi^{(2)}$ defined in  \eqref{def:Xi1:Xi2:fd}.  
  The reversibility condition \eqref{eq:Mom:flp:rv:smp} as suitable for the momentum
  flip operation $R$ in \eqref{def:flip:R} can be replaced with a more
  general condition \eqref{cond:fu:fv} as we explore in
  \cref{prop:HMC:Leap:Frog} below.
\end{Remark}

\subsubsection{Case 2: Splitting into Hamiltonian Sub-dynamics}
\label{ssec:ham:sub:dyn}

A second case of interest arises when we consider a surrogate of the
form
\begin{align}\label{eq:Ham:split}
	J^{-1} \nabla \Ham(\bqv) \approx \sum_{j=1}^m J^{-1}_j \nabla \Ham_j(\bqv)
\end{align}
for \eqref{eq:Ham:dyn:comp:fm} where each $J_j$ is
antisymmetric and invertible and each $\Ham_j: \RR^\fd \to \RR$ is
suitably smooth.  Corresponding to each element in this sum,
\eqref{eq:Ham:split}, we consider mappings $\Xi^{(j)}$ where
\begin{align}\label{eq:Ham:spl:1}
	\{\Xi_t^{(j)}\}_{t \geq 0} 
	\; \text{ is the solution map for } \;
	\frac{d \bqv}{dt} = J^{-1}_j \nabla \Ham_j(\bqv),
\end{align}
defined so long as each of the associated Hamiltonian systems admits a
globally defined dynamic.  The idea here is that we might formulate
the approximation in \eqref{eq:Ham:split} so that each
$\{\Xi_t^{(j)}\}_{t \geq 0}$ has an explicitly solvable form while in
any case preserving crucial structural properties of Hamiltonian
systems commensurate with the setting of \cref{prop:Ham:res:1}.  As
such, \eqref{eq:Ham:split} would suggest that a suitable composition
of the maps $\Xi^{(j)}$ would yield a reasonable approximation for
\eqref{eq:Ham:dyn:comp:fm}.  Keeping in mind \cref{prop:inv:R:S}, we
select for some $l \in \NN$, $n \in \NN$, any
$j_1, \ldots, j_l \in \{1, \ldots, m\}$, any
$\delta_1, \ldots, \delta_l > 0$, and define
\begin{align}\label{eq:ham:spt:gen:alg}
 \cSol(\bq_0, \bv_0)  \coloneqq 
 	\left(  \Xi^{(j_1)}_{\delta_1} \circ \cdots \circ \Xi^{(j_l)}_{\delta_l} 
  \circ  \Xi^{(j_l)}_{\delta_l} \circ  \cdots \circ\Xi^{(j_1)}_{\delta_1}
  \right)^n (\bq_0, \bv_0),
  \quad \mbox{ for all } (\bq_0, \bv_0) \in \RR^{2\fd}.
\end{align}

Before providing conditions under which \eqref{eq:ham:spt:gen:alg}
yields a suitable class of sampling algorithms we first recall some
basic properties of Hamiltonian dynamical systems that will be needed.
\begin{Proposition}\label{prop:Ham:props:gen}
  Suppose that $\{\Xi_t\}_{t \geq 0}$ is the solution operator of
  \begin{align}
    \label{eq:Ham:gen:split}
    \frac{d\bqv}{dt}  = J^{-1} \nabla \tilde{\Ham}(\bqv)
  \end{align}
  for a matrix $J$ which is antisymmetric and invertible and a $C^2$
  Hamiltonian function $\tilde{\Ham}: \RR^{2\fd} \to \RR$ such that
  the dynamics \eqref{eq:Ham:gen:split} are uniquely and globally defined.
  Then,
  \begin{itemize}
  \item[(i)]   for every $t \geq 0$, $\Xi_t$ is symplectic (with
    respect to $J$) as in
    \cref{def:sym:Plec}, (ii).
  \item[(ii)] Suppose we have a linear involution $R: \RR^{2\fd} \to \RR^{2\fd}$
    such that
    \begin{align}
      \label{eq:r:j:sym:cond}
      R J^{-1}R^* = - J^{-1}
    \end{align}
    and
    \begin{align}
      \label{eq:H:r:sym:cond}
      \tilde{\Ham}(R \bqv) = \tilde{\Ham}(\bqv)
      \quad \text{ for every }\bqv \in \RR^{2\fd}.
    \end{align}
    Then, for every $t \geq 0$, $\Xi_t$ is reversible with respect to
    $R$ in the sense of \cref{def:sym:Plec}(i).  
  \item[(iii)] In particular, consider
  \begin{align*}
    J =
    \begin{pmatrix}
      0& -A\\
      A^*&  0
    \end{pmatrix},
  \end{align*}
  for an invertible $A$ and we assume that $\tilde{\Ham}$ is symmetric
  in its second variable, namely
  $\tilde{\Ham}(\bq, \bv) = \tilde{\Ham}(\bq, -\bv)$, for any
  $(\bq, \bv) \in \RR^{2\fd}$.  Then, for every $t \geq 0$, $\Xi_t$ is
  reversible with respect to the momentum flip involution $R$ as in
  \eqref{def:flip:R}.
  \end{itemize}
\end{Proposition}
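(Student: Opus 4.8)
The plan is to treat the three items in sequence, reducing (iii) to a direct check that the momentum flip satisfies the hypotheses of (ii). Throughout I would write $Y(t) := \nabla \Xi_t(\bqv)$ for the Jacobian of the flow and $H := \nabla^2 \tilde{\Ham}$ for the (symmetric) Hessian, and I would repeatedly use that antisymmetry of $J$ forces $(J^{-1})^* = (J^*)^{-1} = -J^{-1}$.

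For (i) I would differentiate the flow with respect to its initial datum to obtain the variational equation $\dot Y = J^{-1} H(\Xi_t(\bqv)) Y$ with $Y(0) = I$. The symplecticity claim \eqref{eq:symp:def} is precisely that $W(t) := Y(t)^* J Y(t)$ equals $J$ for all $t$. Since $W(0) = J$, it suffices to show $\dot W \equiv 0$. Substituting the variational equation and using $(J^{-1})^* = -J^{-1}$ and $H^* = H$, one finds $\dot Y^* J Y = -Y^* H Y$ while $Y^* J \dot Y = Y^* H Y$, so the two contributions cancel. This is the standard constancy-of-the-symplectic-form computation and poses no real difficulty.

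For (ii) the target is the reversibility identity $R \circ \Xi_t = \Xi_t^{-1} \circ R$ of \eqref{eq:rev:mom:flip:inv}, which, since $\{\Xi_t\}$ is a flow and hence $\Xi_t^{-1} = \Xi_{-t}$, reads $R \circ \Xi_t = \Xi_{-t} \circ R$. The main idea, and the step demanding the most care, is to show that the time-reversed, $R$-conjugated trajectory solves the \emph{same} Hamiltonian ODE. Fixing $\bqv_0$ and setting $\bqv(t) = \Xi_t(\bqv_0)$, I would define $\bw(s) := R\,\bqv(-s)$ and compute $\dot\bw(s) = -R J^{-1} \nabla\tilde{\Ham}(R\bw(s))$, using that $R$ is an involution so $\bqv(-s) = R\bw(s)$. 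Here one invokes \eqref{eq:H:r:sym:cond}: differentiating $\tilde{\Ham}(R\,\cdot) = \tilde{\Ham}(\cdot)$ by the chain rule gives $R^* \nabla\tilde{\Ham}(R\bw) = \nabla\tilde{\Ham}(\bw)$, and since $R$ (hence $R^*$) is a linear involution this rearranges to $\nabla\tilde{\Ham}(R\bw) = R^* \nabla\tilde{\Ham}(\bw)$. Substituting and applying \eqref{eq:r:j:sym:cond} in the form $RJ^{-1}R^* = -J^{-1}$ collapses the right-hand side to $J^{-1}\nabla\tilde{\Ham}(\bw)$. Thus $\bw$ solves the flow ODE with $\bw(0) = R\bqv_0$, so uniqueness of solutions gives $\bw(s) = \Xi_s(R\bqv_0)$, i.e. $R \circ \Xi_{-s} = \Xi_s \circ R$; replacing $s \mapsto -t$ yields exactly \eqref{eq:rev:mom:flip:inv}. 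The delicate points are bookkeeping the involution identities $R^{-1} = R$, $(R^*)^{-1} = R^*$ and inserting \eqref{eq:r:j:sym:cond} with the correct sign; this is where errors are most likely to arise.

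For (iii) I would simply verify that the momentum flip $R(\bq,\bv) = (\bq,-\bv)$ of \eqref{def:flip:R} meets the hypotheses of (ii). Condition \eqref{eq:H:r:sym:cond} is immediate from the assumed symmetry $\tilde{\Ham}(\bq,\bv) = \tilde{\Ham}(\bq,-\bv)$. For \eqref{eq:r:j:sym:cond}, from the block form of $J$ one records $J^{-1} = \begin{pmatrix} 0 & (A^*)^{-1} \\ -A^{-1} & 0 \end{pmatrix}$, and with $R = \begin{pmatrix} I & 0 \\ 0 & -I \end{pmatrix} = R^*$ a direct block multiplication gives $R J^{-1} R^* = \begin{pmatrix} 0 & -(A^*)^{-1} \\ A^{-1} & 0 \end{pmatrix} = -J^{-1}$. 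With both hypotheses verified, (iii) follows from (ii). This last step is routine linear algebra once the block inverse of $J$ is in hand.
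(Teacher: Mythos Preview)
Your proposal is correct and follows essentially the same approach as the paper's proof: for (i) you differentiate $Y^*JY$ along the variational equation; for (ii) you show that $R\bqv(-s)$ solves the same Hamiltonian ODE via the gradient identity coming from \eqref{eq:H:r:sym:cond} combined with \eqref{eq:r:j:sym:cond}, then invoke uniqueness; and for (iii) you verify the hypotheses of (ii) by a block computation. The paper's argument is identical in structure, only more terse (it merely states $dB/dt=0$ for (i) and leaves the block check in (iii) to the reader).
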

\begin{proof}
  For the first item let
  $B(t)(\bqv) := (\nabla \Xi_t(\bqv))^* J \nabla \Xi_t(\bqv)$ and
  notice that $B(0)(\bqv) = J \bqv$ while $d B(\bqv)/ dt = 0$.

  Regarding the second item, given any solution $\bqv (t)$ of
  \eqref{eq:Ham:gen:split} we consider
  $\tilde{\bqv}(t) := R \bqv (-t)$.  Observe that,
  with the assumption that $R$ is an involution,
  \begin{align*}
    \frac{d \tilde{\bqv}(t)}{dt} = - R J^{-1} \nabla \tilde{\Ham}(\bqv(-t))
                                    =- R J^{-1} \nabla \tilde{\Ham}(R \tilde{\bqv}(t))
  \end{align*}
  Now, from \eqref{eq:H:r:sym:cond}, we have
  \begin{align*}
    R^* \nabla \tilde{\Ham}(R \bar{\bqv}) = \nabla \tilde{\Ham} (\bar{\bqv})
    \quad \text{ for every } \bar{\bqv} \in \RR^{2\fd}
  \end{align*}
  and so with the fact that $R^*$ is an involution and our assumption
  \eqref{eq:r:j:sym:cond} we conclude that $\tilde{\bqv}(t)$ must also
  obey \eqref{eq:Ham:gen:split}. From this symmetry observation and
  the uniqueness of solutions of \eqref{eq:Ham:gen:split}, we infer
  \begin{align*}
     \Xi_t( R\bqv_0) =  R\Xi_{-t}( \bqv_0) \quad \text{ for any } t
    \in \RR, \text{ and any } \bqv_0 \in \RR^{2\fd}.
  \end{align*}
  Thus
  \begin{align*}
   ( R \circ \Xi_t \circ   R \circ \Xi_t)( \bqv_0)
    =R \Xi_t (  R \Xi_t(\bqv_0))
    = \Xi_{-t}(\Xi_t(\bqv_0)) = \bqv_0
  \end{align*}
  as desired for the second item.  The third item follows from the
  second with a direct computation showing that
  \eqref{eq:r:j:sym:cond} holds for these specific choices of $J$ and $R$.
  The proof is complete.
\end{proof}
\begin{Remark}
  See \cite[Lemma 2]{tripuraneni2017magnetic} for variations on the
  theme of \cref{prop:Ham:props:gen}, (ii) with some interesting
  applications in deriving further HMC-type sampling algorithms.
\end{Remark}

\begin{Theorem}\label{thm:Ham:sur:splitting}
  Fix a collection of $C^2$ functions
  $\Ham_j : \RR^{2\fd} \to \RR$ along with corresponding
  antisymmetric and invertible $2\fd \times 2 \fd$ real matrices $J_j$
  for $j = 1, \ldots, m$.  Assume the Hamiltonian dynamics associated
  to each of these pairs, \`a la \eqref{eq:Ham:gen:split}, are uniquely
  and globally defined.  Let $\{\Xi^{(j)}_t\}_{t \in \RR}$ be the
  associated solution maps as in \eqref{eq:Ham:spl:1}.  Furthermore
  suppose that there exists a linear involution $R$ such that
  \begin{align}\label{eq:mut:rev:cond}
    R J^{-1}_jR^* = - J^{-1}_j,   \text{ and } \,
    \Ham_j \circ R = \Ham_j,\, \text{ for each }
    j = 1, \ldots, m.
  \end{align}
  Select any $l \in \NN$, $n \in \NN$, along with orderings
  $j_1, \ldots, j_l \in \{1, \ldots, m\}$, and time step sizes
  $\delta_1, \ldots, \delta_l > 0$, and define $\cSol$ as in
  \eqref{eq:ham:spt:gen:alg}.  Then, according to this definition
  \eqref{eq:ham:spt:gen:alg}, $\cSol$ is both symplectic and
  reversible with respect to $R$ in the sense given in \cref{def:sym:Plec}.
\end{Theorem}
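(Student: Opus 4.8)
The plan is to establish the two asserted properties of $\cSol$ separately, in each case first verifying the property for the individual flow maps $\Xi^{(j)}_t$ and then propagating it through the composition \eqref{eq:ham:spt:gen:alg} using the algebraic lemmas already in hand. Since the standing hypotheses of the theorem are tailored to match those of \cref{prop:Ham:props:gen}, the bulk of the argument is bookkeeping of compositions.

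For the symplectic/volume-preserving claim, I would apply \cref{prop:Ham:props:gen}(i) to each pair $(J_j, \Ham_j)$: because the dynamics \eqref{eq:Ham:spl:1} are assumed uniquely and globally defined and each $\Ham_j \in C^2$, the solution map $\Xi^{(j)}_t$ is symplectic with respect to $J_j$ for every $t$, and hence volume-preserving by \cref{lem:cons:symp:rev}(ii). As $\cSol$ is a finite composition of the maps $\Xi^{(j_i)}_{\delta_i}$, \cref{lem:cons:symp:rev}(iii) then yields that $\cSol$ is symplectic, and in particular volume-preserving.

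For $R$-reversibility, the key observation is that the hypothesis \eqref{eq:mut:rev:cond} is precisely conditions \eqref{eq:r:j:sym:cond} and \eqref{eq:H:r:sym:cond} of \cref{prop:Ham:props:gen}(ii) holding simultaneously for every $j$. Hence \cref{prop:Ham:props:gen}(ii) ensures that each $\Xi^{(j)}_t$ is reversible with respect to $R$, i.e. $R \circ \Xi^{(j)}_t = (\Xi^{(j)}_t)^{-1} \circ R$, each such map being invertible as the time-$t$ flow of a globally defined ODE. I would then apply \cref{prop:inv:R:S}(ii) with $\cR = R$ and $\cS_i = \Xi^{(j_i)}_{\delta_i}$ to the inner block $\Psi := \Xi^{(j_1)}_{\delta_1} \circ \cdots \circ \Xi^{(j_l)}_{\delta_l} \circ \Xi^{(j_l)}_{\delta_l} \circ \cdots \circ \Xi^{(j_1)}_{\delta_1}$, which is exactly of the palindromic form demanded by that lemma; this gives $R \circ \Psi = \Psi^{-1} \circ R$. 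A one-line induction on the power then shows $R \circ \Psi^n = \Psi^{-n} \circ R = (\Psi^n)^{-1} \circ R$, so that $\cSol = \Psi^n$ is reversible with respect to $R$ in the sense of \cref{def:sym:Plec}(i).

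The only delicate point is the symplectic claim itself: \cref{lem:cons:symp:rev}(iii) preserves symplecticity only relative to a common matrix $J$, so when the $J_j$ genuinely differ the statement one cleanly extracts from the composition is volume preservation rather than symplecticity with respect to a single fixed form. This is harmless for the intended use, since it is exactly volume preservation together with $R$-reversibility that is required to invoke \cref{prop:Ham:res:1}; when all the $J_j$ coincide, \cref{lem:cons:symp:rev}(iii) delivers the full symplecticity claim verbatim. Beyond this interpretive subtlety I anticipate no real obstacle, the main care being to match the palindromic structure of \eqref{eq:ham:spt:gen:alg} to the exact hypotheses of \cref{prop:inv:R:S}(ii).
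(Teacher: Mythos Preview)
Your proposal is correct and follows essentially the same route as the paper: apply \cref{prop:Ham:props:gen} to get symplecticity and $R$-reversibility of each $\Xi^{(j)}_t$, then propagate both through the palindromic composition via \cref{lem:cons:symp:rev}(iii) and \cref{prop:inv:R:S}(ii). The paper's proof is in fact terser than yours, simply stating that ``these two properties extend to $\cSol$'' without naming the lemmas or treating the $n$-th power explicitly.

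Your closing remark about the symplectic claim is well taken and goes beyond what the paper says: \cref{lem:cons:symp:rev}(iii) indeed presumes a common $J$, so when the $J_j$ differ the clean conclusion is volume preservation rather than symplecticity relative to a single form. The paper's proof glosses over this point entirely. As you note, this is harmless for the downstream application to \cref{prop:Ham:res:1}, which only needs volume preservation, but it is a genuine imprecision in the statement of the theorem that you have identified correctly.
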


\begin{proof}
  According to \cref{prop:Ham:props:gen} each $\Xi^{(j_i)}$ is
  symplectic and is reversible with respect to $R$ for
  $i = 1, \ldots, m$.  Keeping in mind the palindromic structure of
  $\cSol$ in regards to reversibility, it is therefore clear that
  these two properties extend to $\cSol$.  The proof is complete.
\end{proof}

We now summarize our second class of sampling methods which are
derived from \cref{thm:Ham:sur:splitting} with \cref{prop:Ham:res:1}.
As previously with \cref{alg:gHMC:FD:1}, this class includes classical
formulations of RWMC, MALA and HMC as notable special cases in a
fashion which we make precise in \cref{sec:con:class:alg:fd} below.
Note also that a special case of \eqref{eq:ham:spt:gen:alg} yields the
type of `non-standard' splittings which proves to be desirable for the
setting of \eqref{ham:dyn} in \cref{sec:HMC:inf:dim},
\cref{subsubsec:pHMC} below.

\begin{algorithm}[H]
  \caption{(Palindromic Iterations of Hamiltonian Surrogates to sample
    $\mu(d\bq) = Z_\UPot^{-1} e^{-\UPot(\bq)} d\bq$)}
\begin{algorithmic}[1]\label{alg:gHMC:FD:2}
  \State Select the algorithm parameters:
  \begin{itemize}
  \item[(i)] The proposal distribution
    $\Vker(\bq, d \bv) = Z_{\VPot}(\bq)^{-1} e^{-\VPot(\bq,\bv)} d \bv \in \Pr(\RR^\fd)$
    for each $\bq \in \RR^\fd$.
  \item[(ii)] For some $j =1, \ldots, m$ determine a set of surrogate
    $\Ham_j: \RR^{2\fd} \to \RR$ and associated symplectic
    matrices $J_j$ yielding solution maps $\{\Xi^{(j)}\}_{t \in \RR}$
    defined according to $d \bqv /dt = J_j^{-1} \nabla \Ham_j(\bqv)$.
  \item[(iii)] Identify a linear involution $R$ such that
    $R J^{-1}_j R^* = - J^{-1}_j$ and $\Ham_j \circ R = \Ham_j$ for
    $j =1, \ldots, m$.
  \item[(iv)] An operation ordering
    $j_1, \ldots, j_l \in \{ 1, \ldots, m\}$ along with associated
    time step sizes $\delta_1, \delta_2, \ldots, \delta_l > 0$.
  \item[(v)] The number of iterations $n$.
  \end{itemize}
   \State Choose $\bq_0 \in \RR^\fd$.
    \For{$k \geq 0$}
        \State Sample $\bv_k \sim \Vker(\bq_k, d \bv)$.
        \State Propose 
        $(\lbq_{k+1}, \lbv_{k+1}) :=
        \left(  \Xi^{(j_1)}_{\delta_1} \circ \cdots \circ \Xi^{(j_l)}_{\delta_l} 
          \circ  \Xi^{(j_l)}_{\delta_l} \circ  \cdots \circ\Xi^{(j_1)}_{\delta_1}
           \right)^n (\bq_k, \bv_k)$.
      \State Set $\bq_{k+1} :=\lbq_{k+1}$ with probability
         $1 \wedge \left[\exp( - \Ham(R (\lbq_{k+1}, \lbv_{k+1}))
        + \Ham(\bq_k,\bv_k)) \right]$
      and otherwise take $\bq_{k+1} := \bq_{k}$.
    \EndFor
\end{algorithmic}
\end{algorithm}

\subsubsection{Case 3: Non-Separable Dynamics Via Implicit Integrators} 
\label{ssec:imp:int}

We turn to our final case where \eqref{approx:dyn:fd} is assumed to
have a `non-separable' form unsuitable for either of the formulations
previously considered in \cref{ssec:sep:dyn} or
\cref{ssec:ham:sub:dyn}.  This situation arises for example from the
consideration of position dependent kinetic energy terms in
\eqref{eq:Ham} as developed previously in
\cite{GirolamiCalderhead2011}.  Here, in this position dependent case,
we have
\begin{align}
  \label{eq:apx:SD:2}
  \fu(\bq,\bv) \approx (A^*)^{-1} \nabla_\bv \VPot(\bq, \bv),  
  \qquad  
  \fv(\bq,\bv) \approx -A^{-1}   \left( 
  \nabla_\bq (  \UPot(\bq) + \VPot(\bq,\bv) )  
  + Z_\VPot(\bq)^{-1} \nabla_\bq Z_\VPot(\bq)\right),
\end{align}
when, as above, \eqref{eq:Ham:dyn:comp:fm} is defined with
a matrix $J$ of the separated form given in \eqref{Ham:sep:fd}.
In this non-separable case, implicit numerical discretizations for
\eqref{approx:dyn:fd} provides a means of maintaining indispensable
structural properties, namely the reversibility and volume
preservation conditions, required by \cref{prop:Ham:res:1}.  Following
ideas from \cite{LeimkuhlerReich2004, Hairer2006book,
  GirolamiCalderhead2011}, we consider schemes starting from the
so-called \emph{Euler-B} and \emph{Euler-A} methods applied to
\eqref{approx:dyn:fd}.

Start with the \emph{Euler-B} scheme which is defined, 
for a fixed time step $\dT > 0$, number of iterations $n \in \NN$ 
and any initial point
$(\bq_0, \bv_0) \in \RR^{2\fd}$, by
$(\bq_n, \bv_n) := (\SB_{\dT})^n(\bq_0,\bv_0)$. 
Here each iteration step $(\bq, \bv) := \SB_{\dT}(\lbq,\lbv)$ 
is specified, for a given $\lbq,\lbv \in \RR^{\fd}$ 
through the following implicit system of
equations
\begin{align}\label{def:EulerB}
  \bq = \lbq + \dT \fu(\lbq, \bv),
   \qquad \bv = \lbv + \dT \fv(\lbq, \bv).
\end{align}
It is not difficult to check that the
adjoint of $\SB_{\dT}$, cf. \cref{def:adj:num:op}, is the
\emph{Euler-A method} which is defined implicitly for
a single step as $(\bq, \bv) := \SA_{\dT}(\lbq,\lbv)$
where
\begin{align}\label{def:EulerA}
  \bq = \lbq + \dT \fu(\bq, \lbv), \qquad
  \bv = \lbv + \dT \fv(\bq, \lbv),
\end{align}
for given $\lbq,\lbv \in \RR^{\fd}$.  Balancing these two schemes,
\eqref{def:EulerB} and \eqref{def:EulerA}, leads to the consideration
of a symmetric integrator called the \emph{generalized
  St\"ormer-Verlet} method.  Relative to the parameters $\dT > 0$ and
$n \geq 1$ we define
$\hat{S} = \hat{S}_{n,\dT} \coloneqq (\SA_{\dT/2} \circ
\SB_{\dT/2})^n$ so that, for any given $(\bq_0,\bv_0) \in \RR^{2\fd}$,
$(\bq_n, \bv_n) :=\hat{S}_{n,\dT}(\bq_0,\bv_0)$ is computed
inductively according to
\begin{align}\label{def:Sol:fd}
  &\bv_{m + 1/2} = \bv_m + \frac{\dT}{2} \fv(\bq_m, \bv_{m+1/2})
    \notag\\
  & \bq_{m+1} = \bq_m + \frac{\dT}{2}
       \left[ \fu(\bq_m, \bv_{m+1/2}) + \fu(\bq_{m+1}, \bv_{m+1/2}) \right]  \\
  & \bv_{m+1} = \bv_{m+1/2} + \frac{\dT}{2} \fv(\bq_{m+1}, \bv_{m+1/2}) \notag
\end{align}
for $m = 0, \ldots, n-1$.  Note carefully that \eqref{def:Sol:fd}
reduces to \eqref{def:leapfrog:int} in the special case when $\fv$
depends only on $\bq$, $\fu$ only on $\bv$, and we take
$\da =\dT/2, \db = \dT$. See also \cref{rmk:rel:alg} below for
further commentary around this point.

We next establish conditions on $\fu$ and $\fv$ which yield
desirable reversibility and volume-preservation properties for the
scheme \eqref{def:Sol:fd}. In the statement below,
$\Pi_1, \Pi_2: \RR^{2\fd} \to \RR^{2\fd}$ denote the projections onto the
first and second components, respectively, i.e.
\begin{align*}
  \Pi_1(\bq, \bv) = \bq, \quad \Pi_2(\bq, \bv) = \bv
  \quad \mbox{ for all } (\bq, \bv) \in \RR^{2\fd}.
\end{align*}

\begin{Theorem}\label{prop:HMC:Leap:Frog}
  Suppose that $\fu, \fv: \RR^{2\fd} \to \RR^\fd$ are $C^1$ functions
  and assume that, for some $\delta > 0$, the maps 
  $\SB_{\dT/2}, \SA_{\dT/2}$ specified implicitly from
  \eqref{def:EulerB}, \eqref{def:EulerA} are uniquely and globally defined
  and are $C^1$; cf.
  \cref{rmk:ex:unq:imp} below.  For any $n \geq 1$ 
  we consider the generalized
  St\"ormer-Verlet implicit integration scheme
  $\hat{S} = \hat{S}_{n,\dT} := (\SA_{\dT/2} \circ \SB_{\dT/2})^n$
  as given by \eqref{def:Sol:fd}.
  \begin{itemize}
  \item[(i)] Suppose that, for some linear invertible matrix $R$, the
    mapping $\f = (\fu, \fv)$ satisfies
  \begin{align}\label{cond:fu:fv}
    R \f(\Pi_1 \bz, \Pi_2 \tilde{\bz})
    = - \f (\Pi_1 \circ R \bz, \Pi_2 \circ R \tilde{\bz})
    \quad \mbox{ for all } \bz, \tilde{\bz} \in \RR^{2\fd}.
    \end{align}
    Then $\Sol$ is reversible with respect to $R$, in the sense of
    \cref{def:sym:Plec}, (i).  In particular, if $\fu$ and $\fv$ maintain
    \begin{align}\label{cond:fu:fv:flip}
      \fu(\bq, \bv) = - \fu(\bq, -\bv),
      \quad \fv(\bq, \bv) = \fv(\bq, -\bv)
      \quad \mbox{ for all } (\bq, \bv) \in \RR^{2\fd},
    \end{align}
    then $\Sol$ is reversible with respect to the `momentum-flip
    involution operation' \eqref{def:flip:R}.
    \item[(ii)] Now suppose that $\fu, \fv: \RR^{2\fd} \to \RR^{\fd}$ are such that
    \begin{align}\label{eq:symp:sur}
    \nabla_{\bq} \fu (\bq,\bv) + (\nabla_\bv \fv)^{*}(\bq,\bv) = 0
    \quad \text{ and } \quad
    \nabla_\bv \fu(\bq,\bv), \nabla_\bq \fv(\bq, \bv) 
    \text{ are symmetric matrices,}
    \end{align}
    for all $(\bq, \bv) \in \RR^{2\fd}$. Then $\Sol$ is symplectic 
    relative to the canonical form $J$ as in \eqref{eq:cls:J} in 
    the sense of \cref{def:sym:Plec}, (ii). 
  \item[(iii)] Finally consider the case where
    $\f(\bqv) = \tJ^{-1} \nabla \tHam(\bz)$, for each $\bz \in \RR^{2\fd}$ where
    $\tHam \in C^2$ and where $\tJ$ is of the form
   \begin{align}\label{part:tJ}
	\tJ = 
	\begin{pmatrix}
		0 & - E^{-1} \\
		E^{-1} & 0 
	\end{pmatrix}
   \end{align}
   for some invertible matrix $E \in \RR^{\fd\times \fd}$. Then,
   in this circumstance, $\Sol$ is volume-preserving, 
   i.e. $|\det \nabla \Sol(\bz)| = 1$ for all $\bz \in \RR^{2\fd}$.
 \end{itemize}
\end{Theorem}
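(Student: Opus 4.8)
The plan is to reduce all three claims to properties of the two elementary implicit half-steps $\SB_{\dT/2}$ and $\SA_{\dT/2}$, and then to propagate each property through the symmetric composition $\Sol = \Sol_{1,\dT}^n$, where $\Sol_{1,\dT} := \SA_{\dT/2}\circ\SB_{\dT/2}$. The glue throughout is the adjoint identity $\SB_\dT^* = \SA_\dT$ recorded above \eqref{def:EulerA} — equivalently $\SB_{-\dT/2} = \SA_{\dT/2}^{-1}$ and $\SA_{-\dT/2} = \SB_{\dT/2}^{-1}$ — together with the composition lemmas \cref{prop:inv:R:S} and \cref{lem:cons:symp:rev}. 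In each item the single new input is a structural property of $\SB_{\dT/2}$ read off directly from \eqref{def:EulerB}; the matching property of $\SA_{\dT/2}$ then follows by the identical computation on \eqref{def:EulerA} or by the adjoint/inverse relation.

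For (i), I would first establish $R\circ\SB_{\dT/2} = \SB_{-\dT/2}\circ R$. Writing \eqref{def:EulerB} compactly as $\bz_{\mathrm{new}} = \bz_{\mathrm{old}} + \tfrac{\dT}{2}\,\f(\Pi_1\bz_{\mathrm{old}},\Pi_2\bz_{\mathrm{new}})$, applying the linear map $R$, and invoking \eqref{cond:fu:fv} with $\bz=\bz_{\mathrm{old}}$ and $\btz=\bz_{\mathrm{new}}$ converts this into $R\bz_{\mathrm{new}} = R\bz_{\mathrm{old}} - \tfrac{\dT}{2}\,\f(\Pi_1 R\bz_{\mathrm{old}},\Pi_2 R\bz_{\mathrm{new}})$, which is exactly the defining relation for $\SB_{-\dT/2}$ applied to $R\bz_{\mathrm{old}}$. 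The same manipulation on \eqref{def:EulerA} gives $R\circ\SA_{\dT/2} = \SA_{-\dT/2}\circ R$. Using $\SB_{-\dT/2}=\SA_{\dT/2}^{-1}$ and $\SA_{-\dT/2}=\SB_{\dT/2}^{-1}$ then yields $R\circ\Sol_{1,\dT} = \Sol_{1,\dT}^{-1}\circ R$, and \cref{prop:inv:R:S}(i) (applied to the $n$ identical factors, or simply taking powers) promotes this to $R\circ\Sol=\Sol^{-1}\circ R$. The momentum-flip statement then follows by checking that \eqref{cond:fu:fv:flip} is precisely \eqref{cond:fu:fv} specialized to $R$ in \eqref{def:flip:R}.

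For (ii), by \cref{lem:cons:symp:rev}(iii) it suffices to show each half-step is symplectic for the canonical $J$ of \eqref{eq:cls:J}. I would verify this for $\SB_{\dT/2}$ by differentiating \eqref{def:EulerB} in the mixed variables $(\lbq,\bv)$ and confirming invariance of the canonical two-form $d\bq\wedge d\bv = d\lbq\wedge d\lbv$: the terms $d\bv\wedge d\bv$ and $d\lbq\wedge d\lbq$ drop out exactly because $\nabla_\bv\fu$ and $\nabla_\bq\fv$ are symmetric, while the surviving cross terms coincide precisely under the condition $\nabla_\bq\fu + (\nabla_\bv\fv)^* = 0$ from \eqref{eq:symp:sur}. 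Symplecticity of $\SA_{\dT/2}$ then follows either from the analogous computation in the variables $(\bq,\lbv)$ or, more economically, from $\SA_{\dT/2}=(\SB_{-\dT/2})^{-1}$ and the fact that symplectic maps form a group (the conditions \eqref{eq:symp:sur} are independent of the sign of the step). Composing gives symplecticity of $\Sol$.

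Item (iii) is the most delicate, and I expect it to be the main obstacle. Here $\f = \tJ^{-1}\nabla\tHam$ with the \emph{non-canonical} $\tJ$ of \eqref{part:tJ}, so $\fu = E\nabla_\bv\tHam$ and $\fv = -E\nabla_\bq\tHam$, whence $\nabla_\bq\fu = EP^*$ and $\nabla_\bv\fv = -EP$ with $P := \nabla_\bv\nabla_\bq\tHam$ (equality of mixed partials identifying the two mixed Hessians as transposes). Since $\nabla_\bq\fu + (\nabla_\bv\fv)^* = EP^* - P^*E^*$ need not vanish, the scheme is \emph{not} symplectic for the canonical $J$ and item (ii) cannot be reused. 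Instead I would compute the Jacobian determinant of one step of \eqref{def:Sol:fd} directly, factoring the step through the half-step velocity $\bv_{m+1/2}$ into three elementary maps, obtaining
\[
\det\nabla\Sol_{1,\dT} = \frac{\det(I+\tfrac{\dT}{2}\nabla_\bq\fu|_m)}{\det(I-\tfrac{\dT}{2}\nabla_\bv\fv|_m)}\cdot\frac{\det(I+\tfrac{\dT}{2}\nabla_\bv\fv|_{m+1})}{\det(I-\tfrac{\dT}{2}\nabla_\bq\fu|_{m+1})},
\]
where $|_m$ and $|_{m+1}$ denote evaluation at the intermediate points $(\bq_m,\bv_{m+1/2})$ and $(\bq_{m+1},\bv_{m+1/2})$. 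The crux is that each of the two factors equals $1$: substituting $\nabla_\bq\fu = EP^*$, $\nabla_\bv\fv=-EP$ and using $\det(I+X)=\det(I+X^*)$ together with cyclicity of the determinant, each ratio collapses to $1$ \emph{provided $E$ is symmetric} — which is forced here, since antisymmetry of $\tJ$ (required for $\f=\tJ^{-1}\nabla\tHam$ to be a genuine Hamiltonian field as in \eqref{eq:Ham:dyn:comp:fm}) is equivalent to $E=E^*$. As every step Jacobian then has unit determinant, so does $\nabla\Sol$. The care needed is in bookkeeping the two distinct evaluation points and in noting that without $E=E^*$ the cancellation genuinely fails already at first order in $\dT$ (where $\mathrm{tr}(EP^*)\neq\mathrm{tr}(EP)$ for non-symmetric $E$), which is exactly what pins down the symmetry hypothesis.
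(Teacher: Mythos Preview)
Your arguments for items (i) and (ii) are correct and essentially coincide with the paper's: both reduce to the intertwining relations $R\circ\SA_{\dT/2}=\SA_{-\dT/2}\circ R$, $R\circ\SB_{\dT/2}=\SB_{-\dT/2}\circ R$ together with the adjoint identity $(\SB_{\dT/2})^*=\SA_{\dT/2}$, and both establish symplecticity by the wedge-product computation $d\bq\wedge d\bv=d\lbq\wedge d\lbv$ for each half-step, using exactly the three conditions in \eqref{eq:symp:sur}.

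For item (iii) your route is genuinely different from the paper's. The paper computes $\nabla\SA_{\dT}$ (and, by analogy, $\nabla\SB_{\dT}$) directly from the implicit relation \eqref{def:EulerA}, inverts a block-triangular matrix, and then applies the block-determinant identity $\det\begin{psmallmatrix}A&B\\C&D\end{psmallmatrix}=\det(AD-BD^{-1}CD)$ to conclude $|\det\nabla\SA_{\dT}|=1$. You instead factor the full step \eqref{def:Sol:fd} through the intermediate variable $\bv_{m+1/2}$ as a composition of three elementary maps and multiply the resulting determinantal ratios, reducing each to $\det(I+\tfrac{\dT}{2}EP^*)/\det(I+\tfrac{\dT}{2}EP)$ via $\det(I+X)=\det(I+X^*)$ and Sylvester's identity. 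Your approach is more elementary in that it avoids the explicit block inversion and the nonstandard determinant formula; it also has the virtue of making transparent \emph{where} the symmetry of $E$ is used. Indeed, your observation that the cancellation requires $E=E^*$ is exactly right and applies equally to the paper's computation: carrying out the block-determinant step there yields $\det\nabla\SA_{\dT}=\det(I+\dT E\tHam_{\bv\bq})/\det(I+\dT E\tHam_{\bq\bv})$, which collapses to $1$ by the same mechanism only when $E$ is symmetric. The paper leaves this implicit (it is consistent with the ambient convention that $\tJ$ be antisymmetric), whereas you isolate it explicitly.
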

\begin{proof}
  We begin with the first item and prove that $\Sol$ is $R$-reversible
  under assumption \eqref{cond:fu:fv}.  Since $\Sol$ is the $n$-fold
  composition of $\SA_{\dT/2} \circ \SB_{\dT/2}$, due to
  \cref{prop:inv:R:S}, (ii), it suffices to show that
  $\SA_{\dT/2} \circ \SB_{\dT/2}$ is $R$-reversible. Moreover, as we
  already observed above, it is direct to verify that the adjoint of
  the Euler-B scheme is the Euler-A scheme in sense given in
  \cref{def:adj:num:op}, (i). In other words we have that
  $(\SB_{\dT/2})^* = \SA_{\dT/2}$ and so, as observed in
  \cref{lem:adj:props}, (i), we infer that
  $\SA_{\dT/2} \circ \SB_{\dT/2}$ is symmetric.  Invoking
  \cref{lem:adj:props}, (ii) and \cref{prop:inv:R:S}, (i), it
  therefore suffices to show that
  \begin{align}\label{rev:SA:SB}
    R \circ \SA_{\dT/2} = \SA_{-\dT/2} \circ R,
    \,\, \mbox{ and } \quad R \circ \SB_{\dT/2} = \SB_{-\dT/2} \circ R.
  \end{align}
  Let us verify the statement in \eqref{rev:SA:SB} for
  $\SA_{\dT/2}$. Let $\lbz = (\lbq, \lbv) \in \RR^{2\fd}$ and denote
  $\bz = (\bq, \bv) \coloneqq \SA_{\dT/2}(\lbz)$. From
  \eqref{def:EulerA}, we have
  \begin{align*}
    \bz = \lbz+ \frac{\dT}{2} \f(\bq, \lbv)
    = \lbz + \frac{\dT}{2} \f(\Pi_1 \bz, \Pi_2 \lbz).
  \end{align*}
  Applying $R$ and invoking \eqref{cond:fu:fv}, it follows that
  \begin{align}\label{eq:Rz1}
    R \bz = R \lbz + \frac{\dT}{2} R \f(\Pi_1 \bz, \Pi_2 \lbz) 
    = R \lbz - \frac{\dT}{2} \f(\Pi_1 \circ R \bz, \Pi_2 \circ R \lbz).
  \end{align}
  On the other hand, denoting $\btz \coloneqq \SA_{-\dT/2}(R \lbz)$,
  we have again from \eqref{def:EulerA} that
  \begin{align}\label{eq:tz1}
    \btz = R \lbz - \frac{\dT}{2} \f(\Pi_1 \btz, \Pi_2 \circ R \lbz)
  \end{align}
  From \eqref{eq:Rz1}, \eqref{eq:tz1} we conclude by uniqueness of
  solutions for \eqref{def:EulerB} that $R \bz = \btz$ or in other
  words $R \circ \SA_{\dT/2} (\lbz) = \SA_{-\dT/2} \circ R (\lbz)$,
  for every $\lbz \in \RR^{2\fd}$ as desired. The proof for $\SB$ is
  analogous.  Finally, notice that if $R$ is the momentum-flip
  involution \eqref{def:flip:R}, then \eqref{cond:fu:fv} reduces to
  the requirement \eqref{cond:fu:fv:flip}. This finishes the proof of
  the first item.
	
  Turning to the second item we follow the approach in \cite[Chapter
  4.1]{LeimkuhlerReich2004} and show that
  $d\bq \wedge d \bv = d\lbq \wedge d\lbv$ for
  $(\bq,\bv) = \SA_{\dT/2} (\lbq,\lbv)$ and for
  $(\bq,\bv) = \SB_{\dT/2} (\lbq,\lbv)$.  Here $d$ is the exterior
  derivative and $\wedge$ is the wedge product. See the footnote in
  \cref{def:sym:Plec} above and reference
  e.g. \cite{LeimkuhlerReich2004}, \cite{loring2011introduction} for
  further details and proper definitions.  Starting with Euler-A we
  have, referring back to \eqref{def:EulerA}, that
  \begin{align*}
	   d\bq = d\lbq + \frac{\dT}{2} \nabla_\bq \fu(\bq, \lbv) d\bq
	   +\frac{\dT}{2} \nabla_\bv \fu(\bq, \lbv) d\lbv , 
	   \qquad 
	   d\bv = d\lbv +  \frac{\dT}{2} \nabla_\bq \fv(\bq, \lbv)  d\bq
	   + \frac{\dT}{2}\nabla_\bv \fv(\bq, \lbv) d \lbv.
  \end{align*}
  We now compute $d \bq \wedge d \bv$ by expanding first in $d\bv$
  then in $d \bq$ appropriately.  As illustrated in \cite[Chapter
  3.6]{LeimkuhlerReich2004}, we make use of the identity that
  $d \bw \wedge (A d \btw) = (A^*d \bw) \wedge d \btw$ for any
  $\bw, \btw \in \RR^\fd$ and any $\fd \times \fd$ matrix $A$ so that
  in particular $d \bw \wedge (A d\bw) = 0$ when $A$ is
  symmetric. With our standing assumption \eqref{eq:symp:sur} we find
  \begin{align*}
	  d \bq \wedge d \bv 
	  &=
            d \bq \wedge \left( d\lbv
            +  \frac{\dT}{2} \nabla_\bq \fv(\bq, \lbv)  d\bq
	   + \frac{\dT}{2}\nabla_\bv \fv(\bq, \lbv) d \lbv \right)
            = d \bq \wedge d\lbv
            + d\bq \wedge\left( \frac{\dT}{2}\nabla_\bv \fv(\bq, \lbv) d \lbv \right)\\
	   &=  d \lbq \wedge d\lbv  + 
             \left(\frac{\dT}{2} \nabla_\bq \fu(\bq, \lbv) d\bq
             +\frac{\dT}{2} \nabla_\bv \fu(\bq, \lbv) d\lbv\right) \wedge d \lbv
	        + d\bq \wedge\left( \frac{\dT}{2}\nabla_\bv \fv(\bq,
             \lbv) d \lbv \right)
             = d \lbq \wedge d\lbv,
  \end{align*}
  as desired in the first case.  Regarding Euler-B,
  \eqref{def:EulerB}, we have
  \begin{align*}
          d\bq = d\lbq + \frac{\dT}{2}  \nabla_\bq  \fu(\lbq, \bv) d \lbq
          + \frac{\dT}{2} \nabla_\bv  \fu(\lbq, \bv) d \bv,  
          \qquad 
          d\bv = d\lbv + \frac{\dT}{2}\nabla_\bq \fv(\lbq, \bv)d \lbq
          + \frac{\dT}{2} \nabla_\bv \fv(\lbq, \bv) d\bv.
  \end{align*}
  Similarly to the previous case but now expanding first in $d \bq$
  and then in $d \bv$ we have:
  \begin{align}
	  d \bq \wedge d \bv 
	  &= \left( d\lbq + \frac{\dT}{2}  \nabla_\bq  \fu(\lbq, \bv) d \lbq
		+ \frac{\dT}{2} \nabla_\bv  \fu(\lbq, \bv) d \bv \right) \wedge d \bv
             = d \lbq \wedge d\bv  + 
             \left(  \frac{\dT}{2}  \nabla_\bq  \fu(\lbq, \bv) d \lbq \right) \wedge d \bv\\
             &= d \lbq \wedge \left( d\lbv + \frac{\dT}{2}\nabla_\bq \fv(\lbq, \bv)d \lbq
		+ \frac{\dT}{2} \nabla_\bv \fv(\lbq, \bv) d\bv\right)  + 
             \left(  \frac{\dT}{2}  \nabla_\bq  \fu(\lbq, \bv) d \lbq \right) \wedge d \bv 
             = d \lbq \wedge d\lbv,
  \end{align}
  completing the proof of the second item.
	
  We address the final item (iii) showing in this case that $\Sol$ is
  volume-preserving by explicitly computing $\det \nabla \Sol$.  Since
  $\Sol$ is the composition of half-steps of $\SB$ and $\SA$, it
  suffices to verify that each of these integrators is
  volume-preserving. We show this only for $\SA$, since the proof for
  $\SB$ follows analogously. Taking
  $(\fu(\bz),\fv(\bz)) = \tJ^{-1} \nabla \tHam(\bz)$ in
  \eqref{def:EulerA} and differentiating
  $(\bq^*, \bv^*) = \SA_{\dT}(\lbq,\lbv)$ with respect to $\lbq$ and
  $\lbv$, it follows that
  \begin{align*}
		\begin{pmatrix}
			I & - \dT E \tHam_{\bv \bv} \\
			0 & I + \dT E \tHam_{\bq \bv}
		\end{pmatrix}
		\nabla \SA_{1,\dT}(\lbq,\lbv)
		=
		\begin{pmatrix}
			I + \dT E \tHam_{\bv \bq} & 0 \\
			- \dT E \tHam_{\bq \bq} & I
		\end{pmatrix},
  \end{align*}
  where $\tHam_{\bq \bq}$, $\tHam_{\bq \bv} $, $\tHam_{\bv \bq}$,
  $\tHam_{\bv \bv} \in \RR^{\fd \times \fd}$ are evaluated at
  $(\bq^*, \bv^*)$ and denote the matrices of second-order partial
  derivatives of $\tHam$ with respect to the variables $\bq$ and/or
  $\bv$. Therefore,
  \begin{align}\label{nabla:SA}
          \nabla \SA_{\dT}(\lbq,\lbv)=
		\begin{pmatrix}
                  I + \dT E \tHam_{\bv \bq}
                  - \dT^2 E \tHam_{\bv \bv} (I + \dT E \tHam_{\bq\bv})^{-1}
                  E \tHam_{\bq \bq}
                  & \dT E\tHam_{\bv \bv} (I + \dT E \tHam_{\bq \bv})^{-1} \\
			- \dT (I + \dT E \tHam_{\bq \bv})^{-1} E\tHam_{\bq \bq}
                        & (I + \dT E \tHam_{\bq \bv})^{-1}
		\end{pmatrix}.
  \end{align}
  Invoking the formula for the determinant of block matrices, namely
  \begin{align}
	\det
	\begin{pmatrix}
	A & B \\
	C & D
	\end{pmatrix} 
	=\det( A D - B D^{-1} C D),  \quad
            \text{ whenever } A, B, C, D \in \RR^{\fd \times \fd}
            \text{ and } D \text{ is invertible} 
  \end{align}
  (see e.g. \cite{silvester2000}), a direct calculation yields that
  $|\det \nabla \SA_{\dT}(\lbq,\lbv)| = 1$ for any
  $(\lbq,\lbv) \in \RR^{2\fd}$. The proof is now complete.
\end{proof}

Let us make several further technical remarks concerning
\cref{prop:HMC:Leap:Frog}.

\begin{Remark} \label{rmk:ex:unq:imp}
    It is natural to ask if there are reasonable conditions which may be
  placed on $\f := (\fu,\fv)$ and on $\dT >0$ in \eqref{def:Sol:fd}
  guaranteeing a unique solution to this implicitly defined scheme.
  Here, suppose, for example, that $\f$ satisfies a global Lipschitz
  condition, namely
  \begin{align}\label{eq:glo:lip}
    |\f(\tilde{\bz}) - \f(\bz)| \leq K | \tilde{\bz}- \bz|
    \quad
    \text{ for any } \tilde{\bz},\bz \in \RR^{\fd},
  \end{align}
  where $K > 0$ is an absolute constant independent of $\tilde{\bz},\bz$.
  Then, under \eqref{eq:glo:lip}, for any $\delta < K^{-1}$ the scheme
  \eqref{def:Sol:fd} is uniquely defined for any $n \geq 1$, as may be
  readily demonstrated using the Banach fixed point theorem.  Indeed,
  noting that \eqref{def:Sol:fd} is a composition of the Euler A and B
  subschemes, \eqref{def:EulerB} and \eqref{def:EulerA}, we may apply
  this fixed point argument to each of these substeps in turn.
  Regarding the Euler B step, \eqref{def:EulerB}, consider, for any
  $\bq, \bv, \btq, \btv \in \RR^\fd$,
  \begin{align*}
    F_\delta(\bq, \bv, \btq, \btv)
        := ( \btq + \dT \fu(\bq, \btv) , \btv + \dT \fv(\bq, \btv)).
  \end{align*}
  Under \eqref{eq:glo:lip} it is clear that, for any fixed
  $\btq, \btv \in \RR^\fd$ the map
  $(\bq, \bv) \mapsto F_\delta(\bq, \bv, \btq, \btv)$ is contractive
  on $\RR^{2\fd}$ so long as $\delta < K^{-1}$ and hence, for any
  given $\btq, \btv$ and $\delta$ we obtain a unique $(\bq^*, \bv^*)$
  such that $(\bq^*, \bv^*) = F_\delta(\bq^*, \bv^*, \btq, \btv)$.  In
  other words we have shown that \eqref{def:EulerB} must have a unique
  solution.  Of course precisely the same argument may be applied also
  to \eqref{def:EulerA}.  Finally we note that, under
  \eqref{eq:glo:lip} it is direct to show that $\SA_\delta$ and
  $\SB_\delta$ are $C^1$ functions.
\end{Remark}

\begin{Remark}\label{rmk:rev:ham:case}
  In the case where $\f(\bqv) = \tJ^{-1} \nabla \tHam(\bz)$ the
  reversibility condition \eqref{cond:fu:fv} in
  \cref{prop:HMC:Leap:Frog}, (i) can be characterized as follows.
  Given any linear invertible mapping $R: \RR^{2\fd} \to \RR^{2\fd}$,
  which we write in the block form
\begin{align*}
 R = \begin{pmatrix}
A & B \\
C & D
\end{pmatrix}, 
\quad A, B, C, D \in \RR^{\fd \times \fd},
\end{align*}
we suppose that
\begin{align}\label{cond:tHam}
  \tHam(\Pi_1 \circ R \bz, \Pi_2 \circ R \btz)
  = \tHam(\Pi_1 \bz, \Pi_2 \btz)
  \quad \mbox{ for all } \bz, \btz \in \RR^{2\fd},
\end{align}
and that
\begin{align}\label{cond:R:tJ}
  R \tJ^{-1} R_0^* = - \tJ^{-1}, \quad \mbox{ where } 
  R_0 \coloneqq 
  \begin{pmatrix}
    A & 0 \\
    0 & D
  \end{pmatrix}.
\end{align}
To see that these conditions yield \eqref{cond:fu:fv} observe that
using \eqref{cond:tHam} we obtain after a direct calculation that
$\nabla \tHam (\Pi_1 \bz, \Pi_2 \btz) = R_0^* \nabla \tHam(\Pi_1 \circ
R \bz, \Pi_2 \circ R \btz)$, for any $\bz, \btz \in \RR^{2\fd}$.
Thus
\begin{align*}
  R \f(\Pi_1 \bz, \Pi_2 \btz) = R \tJ^{-1} \nabla \tHam(\Pi_1 \bz, \Pi_2 \btz) 
  &= R \tJ^{-1} R_0^* \nabla \tHam (\Pi_1 \circ R \bz, \Pi_2\circ  R \btz)\\
  &= - \tJ^{-1} \nabla \tHam(\Pi_1 \circ R \bz, \Pi_2 \circ R \btz)
    = - \f (\Pi_1 \circ R \bz, \Pi_2 \circ R \btz)
\end{align*}
refer where the third equality follows from \eqref{cond:R:tJ}.  Note that
this condition \eqref{cond:tHam}, \eqref{cond:R:tJ} is
comparable to \eqref{eq:r:j:sym:cond}, \eqref{eq:H:r:sym:cond}
in \cref{prop:Ham:props:gen}.
\end{Remark}

\begin{Remark}
  In \cite[Theorem 3.3]{Hairer2006book}, it is shown that if $\tJ$ is
  of the form in \eqref{part:tJ} with $E = I$, then $\SA$ and $\SB$
  are in fact symplectic integrators with respect to $\tJ$. Namely,
  $(\nabla \SA_{\dT}(\bz))^* \tJ \nabla \SA_{\dT}(\bz) = \tJ$, for all
  $\bz \in \RR^{2\fd}$, and analogously for $\SB$. This can be
  verified directly from \eqref{nabla:SA} by taking $E = I$ or by
  repeating the approach given in \cref{prop:HMC:Leap:Frog}, (ii).
  However, the same does not seem to hold for a general invertible
  $E \in \RR^{\fd \times \fd}$.
\end{Remark}

Combining \cref{prop:HMC:Leap:Frog} with \cref{prop:Ham:res:1} we now
formulate the final algorithm of this section as:

\begin{algorithm}[H]
  \caption{(Surrogates for Non-separable Hamiltonians to sample
    $\mu(d\bq) = Z_\UPot^{-1} e^{-\UPot(\bq)} d\bq$)}
\begin{algorithmic}[1]\label{alg:gHMC:FD:3}
  \State Select the algorithm parameters:
  \begin{itemize}
  \item[(i)] The proposal distribution
    $\Vker(\bq, d \bv) = Z_{\VPot}(\bq)^{-1} e^{-\VPot(\bq,\bv)} d \bv \in \Pr(\RR^\fd)$
    for each $\bq \in \RR^\fd$.
  \item[(ii)] Surrogate functions $\fu, \fv: \RR^{2\fd} \to \RR^\fd$
    specified so that either \eqref{eq:symp:sur} holds or
    $(\fu(\bz), \fv(\bz)) = J^{-1} \nabla \Ham(\bz)$ where $J$ is as in
    \eqref{part:tJ}.
  \item[(iii)] Relative to $\fu, \fv$ identify a linear involution $R$
    maintaining \eqref{cond:fu:fv} (see in particular
    \eqref{cond:fu:fv:flip}).
  \item[(iv)] The time step $\dT > 0$ and number of iterations
    $n \geq 1$.
  \end{itemize}
   \State Choose $\bq_0 \in \RR^\fd$.
    \For{$k \geq 0$}
        \State Sample $\bv_k \sim \Vker(\bq_k, d \bv)$.
        \State Propose
        $(\lbq_{k+1}, \lbv_{k+1}) := \Sol_{n,\dT} (\bq_k, \bv_k)$
        where $\Sol_{n, \dT} = (\SA_{\dT/2} \circ \SB_{\dT/2})^n$ is
        defined implicitly via \eqref{def:Sol:fd}.
        \State Set $\bq_{k+1} :=\lbq_{k+1}$ with probability
        $1 \wedge \left[\exp( - \Ham(R (\lbq_{k+1}, \lbv_{k+1})) +
          \Ham(\bq_k,\bv_k)) \right]$ and otherwise take
        $\bq_{k+1} := \bq_{k}$.
        \EndFor
\end{algorithmic}
\end{algorithm}

\begin{Remark}\label{rmk:rel:alg}
  The methods described in \cref{alg:gHMC:FD:1}, \cref{alg:gHMC:FD:2}
  and \cref{alg:gHMC:FD:3} have quite a bit of overlapping scope and
  can be generalized in a number of immediate ways working from the
  frameworks developed here.  Regarding the scope of the methods
  notice, for example, that \cref{alg:gHMC:FD:1} is actually a special 
  case of \cref{alg:gHMC:FD:2} when  $\fu(\bv) = (A^*)^{-1} \nabla \tilde{\VPot}(\bv)$,
  $\fv(\bq) = -A^{-1} \nabla \tilde{\UPot}(\bq)$, for any
  $\tilde{\VPot}, \tilde{\UPot} \in C^1$ and any invertible $A$, or a special 
  case of \cref{alg:gHMC:FD:3} when, more generally, 
  $\fu = \fu(\bv)$, $\fv = \fv(\bq)$, with appropriate choices for the time steps.  In
  fact, as already observed above, \eqref{def:Sol:fd} reduces to
  \eqref{def:leapfrog:int} with $\da = \dT/2$ and $\db = \dT$ whenever
  $\fu$ only depends on $\bv$ and $\fv$ only on $\bq$.

  Regarding further generalizations we observe that
  \cref{alg:gHMC:FD:1} and \cref{alg:gHMC:FD:3} are easily extended to
  any palindromic splitting of $\Xi^{(1)}, \Xi^{(2)}$ or of $\SA, \SB$
  in the fashion of \eqref{eq:ham:spt:gen:alg} in
  \cref{alg:gHMC:FD:2}. Note that the efficacy of such extended
  palindromic settings have been explored in more depth recently in
  \cite{bou2018geometric}.  In a different direction we note that in
  all of the above algorithms we have identified conditions
  guaranteeing that the integrator is symplectic or at least volume
  preserving.  In principle, \cref{alg:gHMC:FD} allows for schemes
  $\cSol$ where $|\det(\nabla \cSol(\bqv))| \not \equiv 1$. However,
  at least within a naive set-up, the computational requirements
  around such gradient terms would appear to be lethal. Nevertheless,
  see the recent work \cite{levy2017generalizing}.
\end{Remark}

\begin{Remark}\label{rmk:sub:error}
  People who have applied HMC to complicated problems may have
  encountered scenarios where the implementation of the numerical
  integrator was incorrect, but the sampler still produced correct
  results.  The structure of \cref{alg:gHMC:FD}, \cref{alg:gHMC:FD:1},
  \cref{alg:gHMC:FD:2} and \cref{alg:gHMC:FD:3} (see also
  \cite{neal2011mcmc}) provides an explanation for this phenomenon --
  as long as the integrator and acceptance ratio are consistent with
  each other, the accept/reject step will correct for any errors in
  the propagation in the dynamical system, whether they be due to
  discretization errors or bugs in the implementation.
\end{Remark}

\subsection{The infinite-dimensional case}
\label{sec:HMC:inf:dim}

We turn now to introduce our second class of generalized trajectory
methods.  These methods, summarized in \cref{alg:method} below, are
adapted to sampling from measures which are defined on a separable
Hilbert space and which are absolutely continuous with respect to a
certain class of Gaussian base measures.  As mentioned above, our
approach here takes its inspiration from a preconditioned HMC method
introduced in \cite{Beskosetal2011} and later revisited in
\cite{beskos2013advanced,beskos2017geometric}.  Actually, as we show
below in \cref{subsec:app:Hilb}, the algorithms introduced in
\cite{Beskosetal2011, beskos2017geometric} along with the pCN and
$\infty$-MALA samplers from e.g. \cite{beskos2008,cotter2013mcmc} all
fall out as special parameter choices in our class of samplers
summarized in \cref{alg:method}.

Let us begin by making our infinite-dimensional setting precise.
Throughout this section we assume that the parameter space $\spq$ is a
separable Hilbert space, with associated norm and inner product
denoted by $|\cdot|$ and $\langle \cdot, \cdot \rangle$,
respectively. We take $\cB_{\spq}$ to signify the associated
$\sigma$-algebra of Borel subsets of $\spq$.  On $\spq$ we consider
target probability measures of the form
\begin{align}\label{Gibbs:meas}
  \mu(d \bq) = \frac{1}{Z} e^{-\Pot(\bq)} \mu_0 (d \bq), \quad
  Z = \int_X e^{-\Pot(\bq)} \mu_0 (d \bq),
\end{align}
for some potential function $\Pot: X \to \RR$ such that
$e^{-\Pot(\bq)}$ is $\mu_0$-integrable, and with $\mu_0$ given as a
centered Gaussian measure on $\spq$ with covariance operator
$\cC: \spq \to \spq$, i.e. $\mu_0 = \cN(0, \cC)$. 

Regarding the covariance structure for the Gaussian measure $\mu_0$,
we assume throughout what follows that $\cC$ is a trace-class,
symmetric and strictly positive definite operator. By the spectral
theorem, it follows that $X$ admits a complete orthonormal basis
$\{\e_i\}_{i \in \NN}$ consisting of eigenfunctions of $\cC$,
corresponding to a non-increasing sequence of positive eigenvalues
$\{\lambda_i\}_{i \in \NN}$.\footnote{Here we note that the
  trace-class condition amounts to requiring that
  $\sum_{j = 1}^\infty \lambda_j < \infty$.}  We thus define the
fractional powers $\cC^\gamma$ of $\cC$ for any $\gamma \in \RR$ as
\begin{align}\label{frac:C}
  \cC^\gamma \bq
  = \sum_{i =1}^\infty \lambda_i^\gamma \langle \bq, \e_i \rangle \e_i,
    \quad 
    \mbox{ for all } \bq \in \mbox{Dom}(\cC^\gamma),
\end{align}
where $\mbox{Dom}(\cC^\gamma)$ can be characterized as the set of all
$\bq \in \spq$ such that
$\sum_{i =1}^\infty \lambda_i^{2\gamma} \langle \bq, \e_i \rangle^2 <
\infty$ when $\gamma \leq 0$ while taking $\mbox{Dom}(\cC^\gamma)$ as
the dual relative to $X$ of $\mbox{Dom}(\cC^{-\gamma})$ when
$\gamma > 0$.  We refer the reader to
\cite{bogachev1998gaussian, DPZ2014} for further general background on
the wider theory of Gaussian measures on function spaces.

The preconditioned Hamiltonian Monte Carlo algorithm introduced in
\cite{Beskosetal2011} to sample from
$\mu(d\bq) = Z^{-1} e^{-\Pot(\bq)} \mu_0(d \bq)$ is based on the
following Hamiltonian function
\begin{align}\label{def:Ham:pHMC}
  \Ham(\bq, \bv) \coloneqq \frac{1}{2} |\cC^{-1/2} \bv|^2
  + \frac{1}{2} |\cC^{-1/2} \bq|^2 + \Pot(\bq).
\end{align}
The potential function $\Pot$ is assumed to be Fr\'echet
differentiable, with its Fr\'echet derivative denoted by
$D \Pot: \spq \to \spq$, where the Hilbert space $X$ is identified
with its dual. It thus follows that \eqref{def:Ham:pHMC} has an
associated (non-canonical) Hamiltonian dynamic given by
\begin{align}\label{ham:dyn}
    \frac{d\bq}{dt} = \bv,
    \quad
    \frac{d\bv}{dt} = -\bq - \cC D\Pot(\bq),
\end{align}
which formally corresponds to the more compact formulation
\eqref{eq:Ham:dyn:comp:fm} with the following choice of
``preconditioning'' operator
\begin{align}
  \label{eq:non:con:form:J:PC}
  J :=
  \begin{pmatrix}
    0&   - \cC^{-1}\\
    \cC^{-1}&     0
  \end{pmatrix}.
\end{align}
Under suitable assumptions on the potential function $\Pot$, one can
show that the Hamiltonian flow associated to \eqref{ham:dyn} holds
$\mu\otimes \mu_0$ as an invariant measure, see \cite[Theorem
3.1]{Beskosetal2011}. As a consequence, the Markov transition kernel
associated to the evolution of the $\bq$ variable in \eqref{ham:dyn},
defined as
\begin{align*}
  P^t(\bq_0,A) \coloneqq \Prb (\bq(t;\bq_0, \bv_0) \in A),
  \quad \bv_0 \sim \mu_0,
\end{align*}
for all $\bq_0 \in \spq$ and $A \in \cB_\spq$, holds the marginal of
$\mu\otimes \mu_0$ onto the $\bq$ variable, i.e. $\mu$, as an
invariant measure.

A crucial insight in \cite{Beskosetal2011} later revisited in
\cite{beskos2013advanced,beskos2017geometric}, is that each of the
components of a certain Strang splitting of \eqref{ham:dyn} given by
\begin{align}\label{split:ham:classic}
    \frac{d\bq}{dt} = 0, \quad \frac{d\bv}{dt} = - \cC D\Pot(\bq),
  \quad \text{ and } \quad
    \frac{d\bq}{dt} = \bv, \quad  \frac{d\bv}{dt} = -\bq,
\end{align}
yield maps $\Xi^{(1)}_t$, $\Xi^{(2)}_t$ such that
$\Xi^{(i)}_t(\mu_0 \otimes \mu_0)$ are absolutely continuous with
respect to $\mu_0 \otimes \mu_0$ for $i =1,2$ and any
$t \geq 0$.\footnote{In fact it is not hard to see that $\Xi^{(2)}_t$
  leaves $\mu_0 \otimes \mu_0$ fixed as is shown in the proof of
  \cref{thm:rev:HMC} below.}  Taking \eqref{def:Ham:pHMC} and the splitting
\eqref{split:ham:classic} of \eqref{ham:dyn} as a starting point
therefore suggests the following more general class of trajectory
methods.

Firstly we may replace $\mathcal{H}(\bq, \bv)$ in \eqref{def:Ham:pHMC}
with a more general class of non-separable Hamiltonians which allows
for a ``position''-dependent kinetic energy component, namely
\begin{align}
  \label{eq:non:sep:Ham}
  \Ham(\bq, \bv)
  \coloneqq \HmT(\bq, \bv) + \frac{1}{2} |\cC^{-1/2} \bv|^2
  + \frac{1}{2} |\cC^{-1/2} \bq|^2 + \Pot(\bq)
\end{align}
for a measurable function $\HmT: \spq \times \spq \to \RR$.  In view
of obtaining a well-defined probability measure on the extended state
space $\spq \times \spq$, we require the following integrability
condition
\begin{align}
  \label{eq:gibb:ID:cond:1}
  \int_{X \times X}\exp(- \HmT(\bq, \bv) - \Pot(\bq))
  \mu_0 \otimes \mu_0(d\bq, d\bv) < \infty.
\end{align}
We further assume that
\begin{align}
  \label{eq:gibb:ID:cond:2}
  \int_{X}\exp(- \HmT(\bq, \bv)) \mu_0(d\bv) = 1, \quad
  \mu_0 \text{ almost surely in } \bq.
\end{align}
This later assumption ensures that the position marginal of the Gibbs
measure $\cM$ associated to $\Ham$ given by \eqref{eq:non:sep:Ham} is
the desired target measure $\mu$ as in \eqref{Gibbs:meas}.  We can
therefore write, in the notational formulation of
\cref{thm:gen:rev:new},
\begin{align}\label{eq:M:gen:inf:form}
  \cM(d\bq, d \bv) = \Vker(\bq, d\bv) \mu(d \bq)
  \quad \text{ where } \Vker(\bq, d\bv) = \exp(- \HmT(\bq, \bv)) \mu_0(d\bv),
\end{align}
so that \eqref{eq:gibb:ID:cond:1} implies mutual absolute continuity
between $\cM$ and $\cM_0 := \mu_0 \otimes \mu_0$.\footnote{One can
  formally relate \eqref{eq:Ham}, \eqref{eq:gibbs:Msr} and
  \eqref{eq:non:sep:Ham}, \eqref{eq:M:gen:inf:form} as follows: We
  assume that
  $Z_\VPot(\bq) := \int \exp(- \HmT(\bq, \bv) - \frac{1}{2} |
  \cC^{-1/2} \bv|^2) d\bv \equiv 1$ so that \eqref{eq:Ham} reduces to
  \eqref{eq:non:sep:Ham} by setting
  $\VPot(\bq, \bv) = \HmT(\bq, \bv) + \frac{1}{2} | \cC^{-1/2} \bv|^2$
  and $\UPot(\bq) = \frac{1}{2} |\cC^{-1/2} \bq|^2 + \Pot(\bq)$.  Of
  course these relationships are purely formal when
  $\dim(\spq) = \infty$ since the Lebesgue measure is ill-defined in
  infinite dimensions.}

A Hamiltonian dynamic associated to the non-separable Hamiltonian in
\eqref{eq:non:sep:Ham} can be formally written as
\begin{align}\label{gen:Ham:infd}
	\frac{d\bz}{dt} = J^{-1} D \Ham (\bz),
\end{align}
for some general ``preconditioning'' operator $J$, and where $D \Ham$
denotes the Fr\'echet derivative of $\Ham$. Under appropriate
assumptions on $J$ and $\Ham$, and following a similar
finite-dimensionalization argument as in \cite{Beskosetal2011}, one
can show that such dynamic holds the probability measure $\cM$ from
\eqref{eq:M:gen:inf:form} as invariant. However, notwithstanding
\cref{ssec:imp:int} above, deriving a suitable numerical integrator
for \eqref{gen:Ham:infd} that is well-defined particularly in this
infinite-dimensional context is a nontrivial task.

An alternative consists in defining a suitable ``surrogate dynamics''
for \eqref{gen:Ham:infd} that can be more easily integrated. Here we
consider the following general and possibly gradient-free ``surrogate
dynamics'', namely
\begin{align}\label{mod:ham:dyn}
    \frac{d\bq}{dt} = \bv,
    \quad
    \frac{d\bv}{dt} = -\bq -  \agp (\bq),
\end{align}
where $\agp: \spq \to \spq$ is a measurable mapping.  In order to
maintain absolute continuity with respect to $\mu_0$, we assume that
$f(\bq)$ is in the Cameron-Martin space of $\mu_0$ for every
$\bq \in \spq$, or in other words
\begin{align}
  f(\bq) \in \mbox{Dom}(\cC^{-1/2})
  \quad \text{ for every } \bq \in \spq.
	\label{eq:f:CM:reg}
\end{align}
Notably, in the particular case that the kernel $\Vker$ from
\eqref{eq:M:gen:inf:form} is given by
$\Vker(\bq, \cdot) = \mu_0 = \cN(0, \cC)$ for every $\bq \in \spq$, as
in \eqref{def:Ham:pHMC}, we may view $\agp$ as a surrogate for
$\cC D \Pot$.  Then, following \eqref{split:ham:classic}, we consider
a numerical discretization of \eqref{mod:ham:dyn} by first splitting
the dynamics into
\begin{align}\label{split:ham:1}
    \frac{d\bq}{dt} = 0, \quad \frac{d\bv}{dt} = -  \agp (\bq)
\end{align}
and
\begin{align}\label{split:ham:2}
    \frac{d\bq}{dt} = \bv, \quad  \frac{d\bv}{dt} = -\bq.
\end{align}
Clearly, the solution of \eqref{split:ham:1} at any time $t$ starting
from $(\bq_0, \bv_0) \in \spq \times \spq$ is given by
\begin{align}\label{def:Xi1}
    \Xi_{\agp,t}^{(1)}(\bq_0, \bv_0) = \Xi_t^{(1)}(\bq_0, \bv_0)   
                     = (\bq_0, \bv_0 - t \agp(\bq_0)),
\end{align}
while the corresponding solution of \eqref{split:ham:2} is given by
\begin{align}\label{def:Xi2}
  \Xi_t^{(2)}(\bq_0, \bv_0)
  = (\cos(t)\bq_0 + \sin(t) \bv_0, - \sin(t) \bq_0 + \cos(t)\bv_0).
\end{align}

For any fixed time steps $\da > 0$ and $\db > 0$ for \eqref{def:Xi1}
and \eqref{def:Xi2}, respectively, and a fixed number $n \in \NN$ of
iterative steps, we consider a numerical integrator for
\eqref{mod:ham:dyn} starting from
$(\bq_0, \bv_0) \in \spq \times \spq$ given by the following
Strang-type splitting
\begin{align}\label{Verlet:int}
  \Sol (\bq_0, \bv_0)
  = \left( \Xi_{\da}^{(1)} \circ \Xi_{\db}^{(2)} \circ \Xi_{\da}^{(1)} \right)^n (\bq_0, \bv_0).
\end{align}
Here note that, with $R$ taken as the usual ``momentum-flip'' operator
defined in \eqref{def:flip:R}, and invoking \cref{prop:inv:R:S}.(i)
and \cref{lem:cons:symp:rev}.(i), it is direct to verify that
$S = R\circ \Sol$ is an involution.  For convenience, we also denote
by $\Si(\bq_0, \bv_0) = (\bq_i, \bv_i)$ the solution arising from $i$
steps taken in the process of \eqref{Verlet:int}, i.e.
\begin{align}\label{not:int:i}
  \Si(\bq_0,\bv_0) = (\bq_i, \bv_i)
  = \left( \Xi_{\da}^{(1)} \circ \Xi_{\db}^{(2)} \circ \Xi_{\da}^{(1)}
  \right)^i(\bq_0, \bv_0),
\end{align}
for $i = 1, \ldots, n$.

We notice that, in contrast to \eqref{ham:dyn}, the system
\eqref{mod:ham:dyn} for general $\agp$ may not generate a Hamiltonian
dynamics, and also may not hold $\mu \otimes \mu_0$ as an invariant
measure.  Similarly with a more general class of Gibbsian measures
$\cM$ as in \eqref{eq:non:sep:Ham}, \eqref{eq:M:gen:inf:form} it is
not clear how to select an $\agp$ so that the resulting flow
\eqref{mod:ham:dyn} holds $\cM$ as an invariant.\footnote{In this
  sense we may understand the algorithms introduced in
  \cite{beskos2017geometric} as themselves being surrogate methods.}
Nevertheless, as we now demonstrate, the class of numerical
integrators $\Sol$ defined by \eqref{Verlet:int} and by ``momentum
selection'' mechanisms $\Vker$ given in \eqref{eq:M:gen:inf:form}
allow for the identification of an accept-reject function $\harFn$ as
in \eqref{def:arFn:b} which generates a sampling algorithm of the form
\eqref{eq:ext:MH:ker:diff} that is reversible with respect to the
desired target measure $\mu$.

\begin{Theorem}\label{thm:rev:HMC}
  Let $\spq$ be a separable Hilbert space with $\cB_\spq$ denoting the
  $\sigma$-algebra of Borel subsets of $\spq$ and take
  $\cC: \spq \to \spq$ be a trace-class, symmetric and strictly
  positive definite operator. Let $\mu_0 = \cN(0, \cC)$ and suppose
  that $\Pot: \spq \to \RR$ is a measurable mapping with
  $e^{-\Pot(\cdot)} \in L^1(\mu_0)$.  Under these assumptions we
  consider a target probability measure
  $\mu(d\bq) = \frac{1}{Z} e^{-\Pot(\bq)} \mu_0(d\bq)$ with
  $Z :=\int_\spq e^{-\Pot(\bq)} \mu_0(d\bq)$.
  
  We now define a class of proposal Markov kernels as follows.  Fix
  any measurable $\agp: \spq \to \spq$ such that $\agp(\bq)$ is in the
  Cameron-Martin space of $\mu_0$ for every $\bq \in \spq$ namely so
  that $\agp$ is subject to \eqref{eq:f:CM:reg}.  We then let $\Sol$
  be the numerical integrator for the dynamics \eqref{mod:ham:dyn}
  defined in \eqref{Verlet:int} from \eqref{def:Xi1}, \eqref{def:Xi2}
  for any fixed discretization parameters $\da > 0$ and $\db > 0$, and
  any desired number of iteration steps $n \geq 1$.  Finally we fix
  any $\cM(d\bq,d \bv) := \Vker(\bq, d\bv) \mu(d\bq)$ such that the
  Markov kernel $\Vker$ has the form
  $\Vker(\bq, d\bv) = \exp(- \HmT(\bq, \bv)) \mu_0(d\bv)$ for any
  measurable $\HmT: \spq \times \spq \to \RR$ subject to integrability
  and normalization conditions \eqref{eq:gibb:ID:cond:1},
  \eqref{eq:gibb:ID:cond:2}. Relative to this data we consider
  proposal kernels of the form
  $Q(\bq, d \btq) = (\Pi_1 \circ \Sol(\bq, \cdot))^*\Vker(\bq,
  d\btq)$.
  
  Then, taking $R$ to be the ``momentum-flip'' operator as defined in
  \eqref{def:flip:R}, $S := R \circ \Sol$ is an involution and
  $(R \circ \Sol)^*\cM$ is absolutely continuous with respect to
  $\cM$. Furthermore, defining the function
  $\harFn: \spq \times \spq \to [0,1]$ as
\begin{align}\label{arFn:HMC:inf}
  \harFn(\bq_0, \bv_0)
  \coloneqq 1 \wedge
  \frac{d (R \circ \Sol)^*\cM}{d \cM}(\bq_0, \bv_0) ,
  \quad (\bq_0, \bv_0) \in \spq \times \spq,
\end{align}
the associated Markov kernel $P: \spq \times \cB_{\spq} \to [0,1]$
given as in \eqref{eq:ext:MH:ker:diff}, with these choices of $S$ and
$\Vker$, is reversible with respect to $\mu$.  Moreover, invoking the
notation from \eqref{not:int:i}, we have
\begin{align}
    &\frac{d (R \circ \Sol)^*\cM}{d \cM}(\bq_0, \bv_0) 
    \notag\\
    &\quad = \exp \left(  \Pot(\bq_0) + \HmT(\bq_0, \bv_0)  - \Pot(\bq_n) -
      \HmT(\bq_n, -\bv_n)
    - \frac{\da^2}{2} \left[ |\cC^{-1/2} \agp(\bq_0)|^2 - |\cC^{-1/2} \agp(\bq_n)|^2 \right] \right. 
    \notag\\
   &\qquad \qquad 
     \left.   + 2\da \sum_{i=1}^{n-1} \langle \cC^{-1/2}\bv_i, \cC^{-1/2}\agp(\bq_i) \rangle 
    + \da \left[ \langle \cC^{-1/2} \bv_0,  \cC^{-1/2}\agp(\bq_0)
     \rangle
     + \langle \cC^{-1/2}\bv_n, \cC^{-1/2} \agp (\bq_n) \rangle  \right] \right),
    \label{RN:RS:mu:mu0}
\end{align}
 for $\cM$-a.e. $(\bq_0, \bv_0) \in \spq \times \spq$.
\end{Theorem}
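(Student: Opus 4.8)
The plan is to establish the two hypotheses \ref{P1c} and \ref{P2c} of \cref{thm:gen:rev:new} for the map $S := R \circ \Sol$ and then invoke that theorem directly to conclude reversibility of $P$ with respect to $\mu$. For \ref{P1c}, I would note, as already indicated just above the statement, that $\Xi^{(1)}_t$ and $\Xi^{(2)}_t$ from \eqref{def:Xi1}, \eqref{def:Xi2} are each reversible with respect to the momentum flip $R$ of \eqref{def:flip:R} (a one-line check); hence by the palindromic composition rule \cref{prop:inv:R:S}(ii) the single block $\Sstp := \Xi^{(1)}_{\da} \circ \Xi^{(2)}_{\db} \circ \Xi^{(1)}_{\da}$ (so that $\Sstp^i = \Si$ in the notation of \eqref{not:int:i} and $\Sol = \Sstp^n$) is $R$-reversible, so is its iterate $\Sol$, and \cref{lem:cons:symp:rev}(i) then gives $S^2 = I$. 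The entire remaining content is the computation of $dS^*\cM/d\cM$, which will simultaneously verify \ref{P2c} and produce \eqref{RN:RS:mu:mu0}.

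My first reduction is to the Gaussian reference measure $\cM_0 := \mu_0 \otimes \mu_0$. From \eqref{eq:M:gen:inf:form} and \eqref{Gibbs:meas} the density $\frac{d\cM}{d\cM_0}(\bq,\bv) = Z^{-1}\exp(-\HmT(\bq,\bv) - \Pot(\bq))$ is strictly positive, and by \eqref{eq:gibb:ID:cond:1} the measures $\cM$ and $\cM_0$ are mutually absolutely continuous. Using $S^{-1} = S$ together with the pushforward rule \eqref{RN:pushfwd} and the chain rule \eqref{chain:rule:meas} along the tower $S^*\cM \ll S^*\cM_0 \ll \cM_0 \ll \cM$, I would obtain
\[
  \frac{dS^*\cM}{d\cM}(\bz)
  = \frac{d\cM}{d\cM_0}(S(\bz)) \,
    \frac{dS^*\cM_0}{d\cM_0}(\bz) \,
    \Big(\frac{d\cM}{d\cM_0}(\bz)\Big)^{-1}.
\]
Since $S(\bq_0,\bv_0) = (\bq_n, -\bv_n)$ with $(\bq_n,\bv_n) = \Sol(\bq_0,\bv_0)$, the two outer factors collapse to $\exp(\Pot(\bq_0) + \HmT(\bq_0,\bv_0) - \Pot(\bq_n) - \HmT(\bq_n,-\bv_n))$, which is exactly the first line of \eqref{RN:RS:mu:mu0}. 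It then remains to compute $dS^*\cM_0/d\cM_0$ and to check it is finite and positive, which yields \ref{P2c}.

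Two facts drive the computation of $dS^*\cM_0/d\cM_0$. First, $\Xi^{(2)}_t$ preserves $\cM_0$: writing $\cM_0 = \cN(0,\mathrm{diag}(\cC,\cC))$, the map $\Xi^{(2)}_t$ is the orthogonal rotation $(\bq,\bv) \mapsto (\cos t\, \bq + \sin t\, \bv, -\sin t\, \bq + \cos t\, \bv)$, which commutes with $\mathrm{diag}(\cC,\cC)$ and hence leaves the covariance, and so the centered Gaussian $\cM_0$, invariant. Second, $\Xi^{(1)}_t$ shifts only $\bv$, by the element $-t f(\bq) \in \mathrm{Dom}(\cC^{-1/2})$ guaranteed by \eqref{eq:f:CM:reg}; holding $\bq$ fixed, the Cameron--Martin theorem gives $(\Xi^{(1)}_t)^*\cM_0 \ll \cM_0$ with
\[
  \frac{d(\Xi^{(1)}_t)^*\cM_0}{d\cM_0}(\bq,\bv)
  = \exp\Big(-t \langle \cC^{-1/2} f(\bq), \cC^{-1/2}\bv\rangle
              - \tfrac{t^2}{2}|\cC^{-1/2}f(\bq)|^2\Big).
\]
Because $R^*\cM_0 = \cM_0$, the rule \eqref{RN:pushfwd} reduces $\frac{dS^*\cM_0}{d\cM_0}(\bz)$ to $\frac{d\Sol^*\cM_0}{d\cM_0}(R\bz)$. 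I would then apply the iterated pushforward identity \eqref{RN:push:comp} to $\Sol = \Sstp^n$, giving $\frac{d\Sol^*\cM_0}{d\cM_0}(\bz) = \prod_{k=0}^{n-1} \frac{d\Sstp^*\cM_0}{d\cM_0}(\Sstp^{-k}(\bz))$, and evaluate at $\bz = R(\bq_0,\bv_0)$. Here $R$-reversibility yields $\Sstp^{-k} = R\,\Sstp^k R$, so $\Sstp^{-k}(R(\bq_0,\bv_0)) = R\,\Sstp^k(\bq_0,\bv_0) = (\bq_k, -\bv_k)$, collapsing every backward evaluation point to a momentum-flipped forward state.

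To finish, I compute the single-block density $\frac{d\Sstp^*\cM_0}{d\cM_0}$ by a second application of \eqref{RN:push:comp} to the three maps of $\Sstp$; the middle $\Xi^{(2)}_{\db}$ contributes a factor $1$, leaving the product of the two Cameron--Martin densities above, the second evaluated at $(\Xi^{(1)}_{\da}\circ\Xi^{(2)}_{\db})^{-1}(\bq_k,-\bv_k) = R(b_k)$, where $b_k$ is the state between the two $\Xi^{(1)}$ steps of block $k+1$, satisfying $\Projq b_k = \bq_{k+1}$ and $b_k = (\bq_{k+1}, \bv_{k+1} + \da f(\bq_{k+1}))$. The momentum reversal flips the sign of the linear Cameron--Martin term, so $\frac{d\Sstp^*\cM_0}{d\cM_0}(\bq_k,-\bv_k)$ equals $\exp$ of $\da\langle \cC^{-1/2}f(\bq_k),\cC^{-1/2}\bv_k\rangle - \tfrac{\da^2}{2}|\cC^{-1/2}f(\bq_k)|^2 + \da\langle \cC^{-1/2}f(\bq_{k+1}),\cC^{-1/2}\bv_{k+1}\rangle + \tfrac{\da^2}{2}|\cC^{-1/2}f(\bq_{k+1})|^2$. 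Taking the product over $k = 0,\dots,n-1$, the quadratic terms telescope to $\tfrac{\da^2}{2}(|\cC^{-1/2}f(\bq_n)|^2 - |\cC^{-1/2}f(\bq_0)|^2)$ and the linear terms coalesce (interior nodes counted twice) into $\da\langle\cC^{-1/2}\bv_0,\cC^{-1/2}f(\bq_0)\rangle + 2\da\sum_{i=1}^{n-1}\langle\cC^{-1/2}\bv_i,\cC^{-1/2}f(\bq_i)\rangle + \da\langle\cC^{-1/2}\bv_n,\cC^{-1/2}f(\bq_n)\rangle$, which is precisely the remainder of \eqref{RN:RS:mu:mu0}; finiteness and positivity of the resulting expression give \ref{P2c}. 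I expect the main obstacle to be exactly this evaluation-point bookkeeping: threading the inverse maps in \eqref{RN:push:comp} through the $R$-reversibility relations so the backward points become $(\bq_k,\pm\bv_k)$ with the correct signs, and merging the two $\Xi^{(1)}$ half-contributions at each interior node into the coefficient $2\da$. A secondary technical point, harmless but requiring care in infinite dimensions, is justifying $(\Xi^{(2)}_t)^*\cM_0 = \cM_0$ and the Cameron--Martin density directly on the Gaussian $\cM_0$, since no Lebesgue Jacobian is available as in the finite-dimensional \cref{prop:Ham:res:1}.
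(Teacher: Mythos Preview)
Your proposal is correct and follows essentially the same route as the paper's own proof: reduce to the Gaussian reference $\cM_0 = \mu_0\otimes\mu_0$ via the chain rule, use that $\Xi^{(2)}_t$ preserves $\cM_0$ and that $\Xi^{(1)}_t$ is a Cameron--Martin shift, then apply \eqref{RN:push:comp} twice and simplify the backward evaluation points via $R$-reversibility to obtain the telescoping sum. The only cosmetic difference is that the paper checks $(R\circ\Xi^{(j)}_t)^2=I$ directly and invokes \cref{prop:inv:R:S}(i), whereas you cite \cref{prop:inv:R:S}(ii) on the palindromic block; both routes are equivalent here.
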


We summarize the class of algorithms to sample from measures of the
form $\mu(d\bq) \propto e^{-\Pot(\bq)} \mu_0(d\bq)$ for
$\mu_0 = N(0, \cC)$ resulting from \cref{thm:rev:HMC} as follows:
\begin{algorithm}[H]
\caption{Splittings of preconditioned dynamics for Gaussian-Hilbert space targets
  $\tfrac{e^{-\Phi(\bq)} }{Z} \mu_0(d \bq)$.}
\begin{algorithmic}[1]\label{alg:method}
    \State Select the algorithm parameters:
    \begin{itemize}
      \item[(i)] The form of $\HmT(\bq, \bv)$ for the
      proposal distribution $\Vker(\bq, d\bv) := \exp(- \HmT(\bq, \bv)) \mu_0(d\bv)$.
      \item[(ii)] The approximation $\agp$ of $D \Pot$. 
      \item[(iii)] The discretization time step parameters $\da > 0$,
        $\db > 0$ in the splittings \eqref{def:Xi1} and
        \eqref{def:Xi2} respectively.
      \item[(iv)] The number of iterative steps $n \in \mathbb{N}$ now
        defining $\Sol$ in \eqref{Verlet:int}.
    \end{itemize}
    \State Choose $\bq_0 \in X$
    \For{$k \geq 0$}
        \State Sample $\bv_k \sim \Vker(\bq_k, d\bv)$
        \State Propose $\lbq_{k+1} := \Projq \circ \Sol(\bq_k,\bv_k)$,
            with $\Sol$ as defined in \eqref{Verlet:int}
        \State Set $\bq_{k+1} :=\lbq_{k+1}$ with probability
                   $\harFn(\bq_{k}, \bv_{k})$ given by
                   \eqref{arFn:HMC:inf},
                   computed via \eqref{RN:RS:mu:mu0}, otherwise
                   take $\bq_{k+1} := \bq_{k}$.
    \EndFor
\end{algorithmic}
\end{algorithm}

We turn now to the proof of \cref{thm:rev:HMC}.  Before diving into
the details we attempt to provide some intuition.  The integrator
$S = R \circ \Sol$ in this case involves repeated applications of the
maps $\Xi_{\da}^{(1)}$, $\Xi_{\db}^{(2)}$, and $R$. The latter two
maps hold $\mu_0 \otimes \mu_0$ invariant, while the Radon-Nikodym
derivative associated with the pushforward of $\mu_0 \otimes \mu_0$ by
$\Xi_{\da}^{(1)}$ can be computed via the Cameron-Martin Theorem. We
then repeatedly apply the property \eqref{RN:push:comp} to compute the
Radon-Nikodym derivative associated with the pushforward of
$\mu_0 \otimes \mu_0$ by $S$, thus yielding the the desired acceptance
probability given by \eqref{arFn:HMC:inf}, \eqref{RN:RS:mu:mu0}.

\begin{proof}[Proof of \cref{thm:rev:HMC}]
  As usual we proceed by establishing conditions \ref{P1c}, \ref{P2c}
  in \cref{thm:gen:rev:new} for the elements $S =R \circ \Sol$ and
  $\cM$ specified according to \eqref{Verlet:int} and
  \eqref{eq:M:gen:inf:form}, respectively.  In the process of
  establishing \ref{P2c} we demonstrate \eqref{RN:RS:mu:mu0} by making
  suitable use of various identities from \cref{subsec:prelim:meas}.
  Regarding \ref{P1c}, from the definitions of $\Xi_t^{(1)}$ and
  $\Xi_t^{(2)}$ in \eqref{def:Xi1} and \eqref{def:Xi2}, it is not
  difficult to check that
  \begin{align}\label{eq:Inv:stp:GHS}
    (R \circ \Xi_t^{(1)})^2 = I
    \quad \text{ and } \quad
    (R \circ \Xi_t^{(2)})^2 = I
    \quad    \text{ for any } t \geq 0.
  \end{align}
  Hence, from
  \cref{prop:inv:R:S}, (i) and \cref{lem:cons:symp:rev}, (i) it follows
  that $(R \circ \Sol)^2 = I$, so that $S$ is indeed an involution and
  thus condition \ref{P1c} is verified.

  Turning to condition \ref{P2c} of \cref{thm:gen:rev:new} we
  introduce an intermediate measure $\cM_0 = \mu_0\otimes
  \mu_0$. Notice that, embedded in the assumption
  \eqref{eq:gibb:ID:cond:1}, we have that $\cM$ and $\cM_0$ must be
  mutually absolutely continuous and hence it follows that $S^*\cM$
  and $S^*\cM_0$ must also be mutually absolutely continuous.  Thus,
  in order to establish \ref{P2c}, it suffices to prove that
  $S^* \cM_0$ is absolutely continuous with respect to $\cM_0$.  This
  being true, notice moreover that, with the fact that $S$ is and
  involution and the aide of the identities \eqref{chain:rule:meas},
  \eqref{RN:pushfwd}, we have
  \begin{align}\label{RN:S:cM:cM0}
    \frac{d S^* \cM}{d \cM}(\bq, \bv)
    = \frac{d  \cM_0}{d \cM} (\bq, \bv)
        \frac{d \cM}{d \cM_0} (S(\bq, \bv)) 
        \frac{d S^* \cM_0}{d \cM_0} (\bq, \bv) 
  \end{align}
  for $\cM$-a.e. $(\bq, \bv) \in \spq \times \spq$.  Here, using
  \eqref{eq:RN:Der:form}, we have
  \begin{align}
  \label{eq:dsMM:p1}
    \frac{d \cM_0 }{d \cM}( \bq_0, \bv_0)
    = \exp \left(  \Pot(\bq_0) + \HmT(\bq_0, \bv_0) \right)
  \end{align}
  and similarly, invoking the notation introduced in \eqref{not:int:i},
  \begin{align}
    \label{eq:dsMM:p2}
    \frac{d \cM }{d \cM_0}( S( \bq_0, \bv_0))
      = \frac{d \cM }{d \cM_0}( \bq_n, -\bv_n)
      =\exp \left(  - \Pot(\bq_n) -\HmT(\bq_n, -\bv_n)  \right).
  \end{align}

  We now show that indeed $S^* \cM_0 \ll \cM_0$ and compute
  $d S^* \cM_0/ d\cM_0$ by breaking $S$ up into its constituent maps
  and making repeated usage of \eqref{RN:push:comp}.  Start by
  noticing that the mappings $\Xi_{\da}^{(1)}$ and $\Xi_{\db}^{(2)}$
  defined in \eqref{def:Xi1}-\eqref{def:Xi2} satisfy
  $(\Xi_{\delta_i}^{(i)})^*\cM_0 \ll \cM_0$, $i = 1,2$. Indeed, it is
  not difficult to check via e.g. the equivalence of characteristic
  functionals that the rotation mapping $\Xi_{\db}^{(2)}$ preserves
  the measure $\cM_0$, i.e. $(\Xi_{\db}^{(2)})^*\cM_0 = \cM_0$, so
  that in particular
\begin{align}\label{RN:Xi2}
    \frac{d (\Xi_{\db}^{(2)})^*\cM_0}{d \cM_0} (\bq, \bv) = 1 
    \quad \mbox{ for $\cM_0$-a.e. } (\bq,\bv) \in \spq \times \spq.
\end{align}
Regarding $\Xi_{\da}^{(1)}$ we denote by
$G: \spq \times \spq \to \spq$ the mapping corresponding to the second
component of $\Xi_{\da}^{(1)}$, i.e.
\begin{align*}
  G(\bq, \bv) \coloneqq \bv - \da \agp(\bq),
  \quad (\bq, \bv) \in \spq \times \spq,
\end{align*}
and thus obtain 
\begin{align*}
    (\Xi_{\da}^{(1)})^*\cM_0 (d \bq, d \bv) 
    = G(\bq, \cdot)^*\mu_0(d \bv) \mu_0 (d \bq).
\end{align*}
Now, for $\bv \sim \mu_0 = \cN(0,\cC)$ it follows that for each fixed
$\bq \in \spq$, $G(\bq, \bv) \sim \cN(- \da \agp(\bq), \cC)$. Hence,
for each $\bq \in X$,
$G(\bq, \cdot)^*\mu_0 = \text{Law}(G(\bq, \bv)) = \cN(- \da \agp(\bq),
\cC)$.  Under our assumption \eqref{eq:f:CM:reg}, namely that
$\agp(\bq) \in \mbox{Dom}(\cC^{-1/2})$, we have that $\mu_0$ and
$G(\bq, \cdot)^*\mu_0$ are mutually absolutely continuous and,
consequently, $(\Xi_{\da}^{(1)})^*\cM_0 \ll \cM_0$, with
\begin{align}\label{RN:Xi1}
    \frac{d (\Xi_{\da}^{(1)})^*\cM_0}{d \cM_0} (\bq, \bv)
    =
    \frac{d G(\bq, \cdot)^*\mu_0}{d \mu_0} (\bv) 
    = 
    \exp \left( - \da \langle \cC^{-1/2}\agp(\bq),  \cC^{-1/2}\bv \rangle
          - \frac{\da^2}{2} |\cC^{-1/2} \agp(\bq)|^2 \right) 
\end{align}
for $\cM_0$-a.e. $(\bq, \bv) \in \spq \times \spq$ (see
e.g. \cite[Theorem 2.23]{DPZ2014}).

From \eqref{RN:push:comp} and \eqref{RN:Xi2} and recalling the
notation \eqref{not:int:i}, we deduce that the mapping
$\Sstp \coloneqq \Xi_{\da}^{(1)} \circ \Xi_{\db}^{(2)} \circ
\Xi_{\da}^{(1)}$ satisfies $\Sstp^*\cM_0 \ll \cM_0$ and
\begin{align}\label{RN:Psi:mu0}
    \frac{d \Sstp^*\cM_0}{d \cM_0} (\bq, \bv) 
    =
  \frac{d (\Xi_{\da}^{(1)})^*\cM_0}{d \cM_0}
  ((\Xi_{\da}^{(1)} \circ \Xi_{\db}^{(2)})^{-1}(\bq, \bv)) 
    \frac{d (\Xi_{\da}^{(1)})^*\cM_0}{d \cM_0} (\bq, \bv), 
\end{align}
for $\cM_0$-a.e. $(\bq, \bv) \in \spq \times \spq$.  Consequently,
invoking \eqref{RN:push:comp} once again, we obtain that
$\Sol = \Sstp^n$ satisfies $\Sol^*\cM_0 \ll \cM_0$, with
\begin{align}\label{RN:Sol:mu0}
    \frac{d \Sol^*\cM_0}{d \cM_0} (\bq, \bv) 
    =
    \prod_{i=1}^n  \frac{d \Sstp^*\cM_0}{d \cM_0} ((\Sstp^{i-1})^{-1}(\bq, \bv)),
\end{align}
for $\cM_0$-a.e. $(\bq, \bv) \in \spq \times \spq$, where we set
$\Sstp^0 \coloneqq I$.

Finally, it is not difficult to check (e.g. via equivalence of
characteristic functionals) that the momentum-flip operator $R$,
defined as in \eqref{def:flip:R}, preserves the measure $\cM_0$, so
that
\[
  \frac{d R^*\cM_0}{d \cM_0}(\bq, \bv) = 1 \quad
  \mbox{ for $\cM_0$-a.e. } (\bq, \bv) \in \spq \times \spq.
\]
Therefore, with a third application of \eqref{RN:push:comp}, we deduce
that $(R \circ \Sol)^*\cM_0 \ll \cM_0$ and
\begin{align}\label{RN:R:Sol:mu0}
    \frac{d (R \circ \Sol)^*\cM_0}{d \cM_0} (\bq, \bv)
    &= 
    \frac{d \Sol^*\cM_0}{d \cM_0} (R^{-1}(\bq, \bv)) 
    \frac{d R^*\cM_0}{d \cM_0}(\bq, \bv) \notag \\
    &=
    \frac{d \Sol^*\cM_0}{d \cM_0} (R(\bq, \bv)),
\end{align}
for $\cM_0$-a.e. $(\bq, \bv) \in \spq \times \spq$, where in the last
equality we used that $R^2 = I$.

Moreover, from \eqref{RN:Sol:mu0} and \eqref{RN:R:Sol:mu0} it follows
that for $\cM_0$-a.e. $(\bq, \bv) \in \spq \times \spq$
\begin{align}\label{RN:R:Sol:mu0:b}
    \frac{d (R \circ \Sol)^*\cM_0}{d \cM_0} (\bq, \bv)
    &=
      \prod_{i=1}^n  \frac{d \Sstp^*\cM_0}{d \cM_0} ((\Sstp^{i-1})^{-1}
                             \circ R (\bq, \bv)) \notag \\
    &= \prod_{i=1}^n  \frac{d \Sstp^*\cM_0}{d \cM_0} (R \circ \Sstp^{i-1} (\bq, \bv)).
\end{align}
To justify the second equality, observe that from
\eqref{eq:Inv:stp:GHS}, \eqref{not:int:i} together with
\cref{prop:inv:R:S} and the fact that $R^2 = I$ we deduce that
$(\Sstp^{i-1})^{-1} \circ R = R \circ \Sstp^{i-1}$, for all
$i = 1, \ldots, n$. With similar logic we observe that
$(\Xi_{\da}^{(1)} \circ \Xi_{\db}^{(2)})^{-1} \circ R =
\Xi_{\da}^{(1)} \circ R \circ \Sstp$ and obtain, invoking the notation
in \eqref{not:int:i}, that for $i = 1, \ldots, n$ and
$(\bq_0, \bv_0) \in \spq \times \spq$
\begin{align*}
   (\Xi_{\da}^{(1)} \circ \Xi_{\db}^{(2)})^{-1} \circ R \circ  \Sstp^{i-1} (\bq_0, \bv_0) 
   =
   \Xi_{\da}^{(1)} \circ R \circ \Sstp^i (\bq_0, \bv_0)
   =
   \Xi_{\da}^{(1)} \circ R (\bq_i, \bv_i)
   =
   (\bq_i, - \bv_i - \da  \agp(\bq_i)).
\end{align*}
Therefore, with these observations \eqref{RN:R:Sol:mu0:b},
\eqref{RN:Psi:mu0} and then \eqref{RN:Xi1}, we obtain for $\cM_0$-a.e.
$(\bq_0, \bv_0) \in \spq \times \spq$
\begin{align*}
   &\frac{d (R \circ \Sol)^*\cM_0}{d \cM_0} (\bq_0, \bv_0)\\
    &\;=
    \prod_{i=1}^n \frac{d (\Xi_{\da}^{(1)})^*\cM_0}{d \cM_0} 
                         (\Xi_{\da}^{(1)} \circ R \circ \Sstp^i (\bq_0, \bv_0))
    \frac{d (\Xi_{\da}^{(1)})^*\cM_0}{d \cM_0} (R \circ  \Sstp^{i-1}(\bq_0, \bv_0)) \\
    &\;= \prod_{i=1}^n \exp \left( - \da \langle \cC^{-1/2}\agp(\bq_i), - \cC^{-1/2}( \bv_i + \da \agp(\bq_i))\rangle 
                                               - \frac{\da^2}{2} |\cC^{-1/2} \agp (\bq_i)|^2 \right)\\
                                                   &\qquad \qquad \qquad \qquad     
   \times \exp \left( - \da \langle \cC^{-1/2} \agp(\bq_{i-1}), - \cC^{-1/2}\bv_{i-1} \rangle  
     - \frac{\da^2}{2} |\cC^{-1/2} \agp (\bq_{i-1})|^2 \right) \\
    &\;= \exp \left( \frac{\da^2}{2} |\cC^{-1/2} \agp(\bq_n)|^2 - \frac{\da^2}{2} |\cC^{-1/2} \agp(\bq_0)|^2  
    + \da \sum_{i=1}^n \left[ \langle \cC^{-1/2} \agp(\bq_{i-1}), \cC^{-1/2} \bv_{i-1} \rangle
                                       + \langle \cC^{-1/2} \agp(\bq_i), \cC^{-1/2} \bv_i \rangle\right] \right).
\end{align*}
Thus, together with \eqref{RN:S:cM:cM0}, this identity combines with
\eqref{eq:dsMM:p1}, \eqref{eq:dsMM:p2} to conclude
\eqref{RN:RS:mu:mu0}.  The proof is now complete.
\end{proof}

\begin{Remark}\label{rmk:leapfrog:fails}
  As similarly pointed out in \cite{beskos2013advanced}, we emphasize
  that the proof of \cref{thm:rev:HMC} provides some additional
  clarification regarding the particular choice of numerical
  integrator for \eqref{ham:dyn} that allowed the derivation of
  $\infty$HMC in \cite{Beskosetal2011} as a well-defined algorithm in
  infinite dimensions. Indeed, if we choose instead the classical
  ``leapfrog'' integrator described in \eqref{def:leapfrog:int} within
  the finite-dimensional setting, and apply it to the
  infinite-dimensional dynamics \eqref{mod:ham:dyn}, we obtain the
  following splitting:
\[
  \frac{d\bq}{dt} = 0, \quad
  \frac{d\bv}{dt} = - \bq - \agp(\bq),
  \quad \mbox{ and } \quad
  \frac{d\bq}{dt} = \bv, \quad \frac{d\bv}{dt} = 0.
\]
The corresponding solution mappings for a given time $t \geq 0$ are
given respectively by
\[
    \Xi_t^{(1)}(\bq_0, \bv_0) = (\bq_0, \bv_0 - t \bq_0 - t
    \agp(\bq_0))
    \quad \mbox{ and } \quad
    \Xi_t^{(2)}(\bq_0, \bv_0) = (\bq_0 + t \bv_0, \bv_0). 
\]
Following the proof of \cref{thm:rev:HMC}, we notice that here the
measures $(\Xi_{\da}^{(1)})^*(\mu_0 \otimes \mu_0)$ and
$(\Xi_{\db}^{(2)})^*(\mu_0 \otimes \mu_0)$ can be written as
\[
  (\Xi_{\da}^{(1)})^*(\mu_0 \otimes \mu_0)(d\bq, d \bv)
  = G(\bq, \cdot)^* \mu_0 (d\bv) \mu_0(d\bq),
  \quad \mbox{ for } \,\, G(\bq, \bv) \coloneqq  \bv - t \bq - t \agp(\bq)
\]
and
\[
  (\Xi_{\db}^{(2)})^*(\mu_0 \otimes \mu_0)(d\bq, d \bv)
  = F(\cdot, \bv)^* \mu_0(d\bq) \mu_0(d\bv),
  \quad \mbox{ for } \,\, F(\bq, \bv) \coloneqq \bq + t \bv.
\]
Now, since $\mu_0 = \cN(0,\cC)$, it follows that
$G(\bq, \cdot)^* \mu_0 = \cN(- t \bq - t \agp(\bq), \cC)$ and
$F(\cdot, \bv)^* \mu_0 = \cN(t\bv, \cC)$. However,
$|\cC^{-1/2} ( \bq + \agp(\bq))|$ and $|\cC^{-1/2} \bv|$ are infinite
for $\mu_0$-a.e. $\bq \in \spq$ and $\mu_0$-a.e. $\bv \in \spq$,
respectively, and thus
$- t \bq - t \agp(\bq) \notin \mbox{Dom}(\cC^{-1/2})$ and
$t \bv \notin \mbox{Dom}(\cC^{-1/2})$ almost surely. This implies that
the Cameron-Martin formula (see \cite[Theorem 2.23]{DPZ2014}) cannot
be applied as done in \eqref{RN:Xi1}, a crucial step for the remainder
of the proof.

In summary, this shows that the validity of \cref{thm:rev:HMC} relies
on utilizing maps $\Xi_{t}^{(1)}$ and $\Xi_{t}^{(2)}$ for which their
corresponding pushforwards on the product of Gaussians, namely
$(\Xi_{t}^{(1)})^*(\mu_0 \otimes \mu_0)$ and
$(\Xi_{t}^{(2)})^*(\mu_0 \otimes \mu_0)$, with $\mu_0 = \cN(0,\cC)$,
are absolutely continuous with respect to the product
$\mu_0 \otimes \mu_0$. In \cref{thm:rev:HMC}, this is achieved with a
``rotation'' mapping, $\Xi_t^{(2)}$ as in \eqref{def:Xi2}, and a
mapping given by a ``shift'' within $\mbox{Dom}(\cC^{-1/2})$,
$\Xi_t^{(1)}$ as in \eqref{def:Xi1}.
\end{Remark}

\begin{Remark}[A generalized Langevin-type algorithm from
  \cref{thm:rev:HMC}]
  We notice that \cref{thm:rev:HMC} allows us to derive a similar
  generalization to infinite-dimensional Langevin-type algorithms for
  sampling from measures $\mu$ of the form \eqref{Gibbs:meas}, as
  introduced in \cite{beskos2008,cotter2013mcmc}. Here we only
  consider, for simplicity, the case when
  $\Vker(\bq, \cdot) = \cN(0,\cC)$, for every $\bq \in \spq$, with a
  suitable covariance operator $\cC$.

  We recall that such Langevin-type algorithms are defined for a given
  Fr\'echet differentiable potential function $\Pot: \spq \to \RR$ and
  covariance operator $\cC: \spq \to \spq$ as in \eqref{Gibbs:meas},
  through the following Langevin dynamic
\begin{align}\label{eq:Langevin:a} 
d\bq + \frac{1}{2} \cK(\cC^{-1} \bq + D \Pot(\bq)) dt =  \sqrt{\cK} dW
\end{align}
for some ``preconditioning'' operator $\cK: \spq \to \spq$ and a
cylindrical Brownian motion $W$ evolving on $\spq$ (see
e.g. \cite{DPZ2014} for the general setting of such stochastic
evolution equations). Under appropriate assumptions on the potential
$\Pot$, it follows that such dynamics holds
$\mu(d\bq) \propto e^{-\Pot(\bq)}\mu_0(d\bq)$ as an invariant measure,
and thus effective MCMC proposals can be generated by taking suitable
numerical discretizations of \eqref{eq:Langevin:a}
\cite{beskos2008,cotter2013mcmc}.

Similarly as in \eqref{mod:ham:dyn}, here we consider the following
more general dynamics
\begin{align}\label{mod:Langevin}
d\bq + \frac{1}{2} \cK\cC^{-1} \bq + \agp(\bq) dt =  \sqrt{\cK} dW,
\end{align}
for some measurable mapping $\agp: \spq \to \spq$.  
In practice we would expect $\agp$ to be taken as a suitable
approximation of $\cK D \Pot$. Choosing $\cK = \cC$ and taking a
semi-implicit Euler time discretization of \eqref{mod:Langevin} where
the linear part of the drift is approximated in a Crank-Nicolson
fashion, yields, for a fixed time step $\dT > 0$,
\begin{align}\label{disc:Langevin}
  \frac{\btq - \bq}{\dT} =
  - \frac{1}{2} \left( \frac{\bq + \btq}{2} + \agp(\bq) \right) +
  \frac{1}{\sqrt{\dT}} \bv,
  \quad \bv \sim \mu_0 = \cN(0,\cC).
\end{align}
Solving \eqref{disc:Langevin} for $\btq$ and denoting
$\ndt = (4 - \dT)/(4 + \dT)$ thus yields the following proposal map
\begin{align}\label{def:F:lolmcmc}
  F(\bq, \bv) \coloneqq \ndt \bq + \sqrt{1 - \ndt^2}
  \left( \bv - \frac{\sqrt{\dT}}{2} \agp(\bq)\right),
  \quad \bq \in \spq, \, \bv \sim \mu_0,
\end{align}
which then defines the proposal kernel
$Q(\bq, d\btq) = F(\bq, \cdot)^*\mu_0(d \btq)$.

To account for the bias introduced both replacing $\cK D \Pot$ with
the surrogate $\agp$\footnote{Here, in contrast to
  \eqref{eq:Langevin:a}, the dynamics \eqref{mod:Langevin} are not
  guaranteed to hold $\mu$ as an invariant measure in general.} and
the from numerical discretization the resulting proposal kernel $Q$
may be complemented with an appropriate accept-reject function $\arFn$
so as to yield a Markov transition kernel $P$ as in
\eqref{eq:kern:tierney} for which $\mu$ is invariant. Following the
approach by \cite{Tierney1998} recalled in \cref{sec:tierney}, such an
$\arFn$ can be obtained as in \eqref{eq:ar:tierney}, by computing the
Radon-Nikodym derivative $d \eta^\perp/ d \eta$ directly, for
$\eta(d\bq, d \btq) = F(\bq, \cdot)^*\mu_0(d \btq) \mu(d\bq)$ and
$\eta^\perp(d\bq, d \btq) = \eta(d\btq,d\bq)$, as done in
\cite{beskos2008}.

Alternatively, here we compute $d \eta^\perp/ d \eta$ by invoking
formula \eqref{RN:eta:S:inv:a} and showing that its right-hand side
follows as a particular case of \eqref{RN:RS:mu:mu0}. Indeed, notice
that for each fixed $\bq \in \spq$, the mapping
$F(\bq,\cdot): \spq \to \spq$ is invertible, with
\begin{align}\label{eq:F:inv:a}
  F(\bq, \cdot)^{-1}(\btq)
  = \frac{1}{\sqrt{1 - \ndt^2}}(\btq - \ndt \bq) + \frac{\sqrt{\dT}}{2} f (\bq).
\end{align}
Now, in view of \eqref{def:inv:S}, we compute
\begin{align}\label{eq:F:inv}
F(F(\bq, \bv), \cdot)^{-1}(\bq)
  &= \frac{1}{\sqrt{1-\ndt^2}}\left( \bq - \rho \Fmap(\bq,\bv) \right)
        + \frac{\sqrt{\dT}}{2} \agp(\Fmap(\bq,\bv))  \notag \\
  &= \frac{1}{\sqrt{1-\ndt^2}}\left[ \bq
       - \ndt\left( \ndt \bq + \sqrt{1-\ndt^2}\left( \bv
    -\frac{\sqrt{\dT}}{2} \agp(\bq)  \right) \right) \right]
    + \frac{\sqrt{\dT}}{2} \agp(\Fmap(\bq,\bv)) \notag \\
  &= \sqrt{1-\rho^2} \bq -\rho\bv + \rho \frac{\sqrt{\dT}}{2} \agp(\bq)
    + \! \frac{\sqrt{\dT}}{2} \agp (\Fmap(\bq,\bv)).
\end{align}
Thus, it follows by \cref{prop:inv:S} or direct calculation that
$S(\bq, \bv) = (F(\bq,\bv), F(F(\bq, \bv), \cdot)^{-1}(\bq))$ is an
involution in $\spq \times \spq$.

Moreover, denote $\hat{S} = R \circ S$, where $R$ is the
``momentum-flip'' map \eqref{def:flip:R}, so that
$S = R \circ \hat{S}$. Following an observation from
\cite{beskos2013advanced}, we notice that such $\hat{S}$ is the
particular one-step case of \eqref{Verlet:int} regarding
\cref{alg:method} for specific choices of $\da$, $\db$. Namely, taking
$\da := \sqrt{\dT}/2$ and $\db := \cos^{-1}\ndt$, we have
\begin{align}\label{split:inf:Lang}
  S = R \circ \hat{S}, \quad \hat{S}
  = \Xi_{\da}^{(1)} \circ \Xi_{\db}^{(2)} \circ \Xi_{\da}^{(1)},
\end{align}
where $\Xi_t^{(1)}$ and $\Xi_t^{(2)}$ are as defined in
\eqref{def:Xi1} and \eqref{def:Xi2}, respectively. It thus follows
from \eqref{RN:RS:mu:mu0} with $\cM = \mu \otimes \mu_0$ that for
$(\mu \otimes \mu_0)$-a.e. $(\bq, \bv) \in \spq \times \spq$
\begin{align}
&\frac{d S^* (\mu \otimes \mu_0)}{d (\mu \otimes \mu_0)} (\bq, \bv)
    =  \exp  \left(   \Pot(\bq) - \Pot(\btq) -
                \frac{\dT}{8} \left[ |\cC^{-1/2} \agp(\bq)|^2
                - |\cC^{-1/2} \agp(\btq)|^2 \right] \right.
\notag \\
&\qquad \qquad \qquad \qquad \qquad \qquad \qquad
            \left. + \frac{\sqrt{\dT}}{2}
            \left[ \langle \cC^{-1/2} \bv, \cC^{-1/2} \agp(\bq)\rangle
            + \langle \cC^{-1/2} \btv, \cC^{-1/2} \agp(\btq) \rangle \right] \right),
\label{RN:S:mu:mu0:lolmcmc}
\end{align}
where
$(\btq, \btv) = \Xi_{\da}^{(1)} \circ \Xi_{\db}^{(2)} \circ
\Xi_{\da}^{(1)} (\bq, \bv) = R \circ S (\bq, \bv)$, so that
\begin{align}\label{def:q1:v1}
  \btq = F(\bq, \bv), \quad \mbox{ and }
  \btv = - F(F(\bq, \bv), \cdot)^{-1}(\bq),
\end{align}
with $F$ as defined in \eqref{def:F:lolmcmc}. Plugging $\btv$ as given
in \eqref{def:q1:v1} and \eqref{eq:F:inv} into
\eqref{RN:S:mu:mu0:lolmcmc}, we obtain after rearranging terms that
for $(\mu \otimes \mu_0)$-a.e. $(\bq, \bv) \in \spq \times \spq$
\begin{multline}\label{RN:lolmcmc}
\frac{d S^* (\mu \otimes \mu_0)}{d (\mu \otimes \mu_0)} (\bq, \bv)
= 
\exp \left(  \Pot(\bq) - \Pot(\btq) - \frac{\dT}{8} 
\left[ |\cC^{-1/2} \agp(\bq)|^2 + |\cC^{-1/2} \agp(\btq)|^2 \right]
+ \frac{\sqrt{\dT}}{2} \langle \cC^{-1/2} \bv, \cC^{-1/2} \agp(\bq)\rangle \right. \\
\left. - \frac{\sqrt{\dT}}{2} \left\langle \cC^{-1/2} (\sqrt{1 -
      \ndt^2} \bq - \ndt \bv) 
+ \ndt \frac{\sqrt{\dT}}{2} \cC^{-1/2} \agp(\bq) , \cC^{-1/2} \agp(\btq) \right\rangle \right).
\end{multline}
In view of \eqref{RN:eta:S:inv}, we plug
$\bv = F(\bq, \cdot)^{-1}(\btq)$ as given in \eqref{eq:F:inv:a} into
\eqref{RN:lolmcmc} to obtain that
\begin{align}\label{alpha:lolmcmc}
  \arFn(\bq, \btq) \coloneqq
  1 \wedge \frac{d S^* (\mu \otimes \mu_0)}
  {d (\mu \otimes \mu_0)} (\bq, F(\bq, \cdot)^{-1}(\btq)) 
= 1 \wedge \frac{\beta(\btq, \bq)}{\beta(\bq, \btq)}, 
\end{align}
where 
\[
  \beta(\bq, \btq) \coloneqq
  \exp \left( - \Pot(\bq) - \frac{\dT}{8} |\cC^{-1/2} \agp(\bq)|^2
    - \frac{\sqrt{\dT}}{2} 
\left\langle \frac{\cC^{-1/2}(\btq - \rho \bq)}{\sqrt{1 - \rho^2}}, 
       \cC^{-1/2} \agp(\bq) \right\rangle \right),
\]
for $\eta$-a.e. $(\bq, \btq) \in \spq \times \spq$, where we recall
that $\eta(d \bq, d\btq) = F(\bq, \cdot)^*\mu_0(d \btq) \mu(d\bq)$. We
notice that formula \eqref{alpha:lolmcmc} for the accept-reject
function concurs with \cite{beskos2008,cotter2013mcmc} in the
particular case $\agp = \cC D \Pot$.
\end{Remark}

\section{Connections with the classical algorithms}
\label{sec:class:ex}

In this final section we make explicit how a variety of established
MCMC method can be derived from \cref{thm:gen:rev:new}.  Here it is
notable and interesting that many of the algorithms discussed here can
be derived by multiple, \emph{non-equivalent}, applications of
\cref{thm:gen:rev:new}; namely different choices of $S$, $\Vker$ in
\eqref{eq:ext:MH:ker:diff} may ultimately lead to the same sampling
kernel.  In fact the machinery introduced in \cref{sec:tierney},
\cref{sec:approx:ham:methods} plays an important role here,
identifying and/or clarifying various connections between a number of
popular sampling methods.

First in \cref{sec:con:class:alg:fd} we consider methods defined for
sampling continuous distributions on $\RR^\fd$.  Further on in
\cref{subsec:app:Hilb} we consider the Hilbert space methods
introduced more recently in e.g. \cite{beskos2008,cotter2013mcmc,
  Beskosetal2011}.

\subsection{Finite-dimensional methods}
\label{sec:con:class:alg:fd}

For all examples in this subsection, we assume a finite-dimensional
state space $\spq = \RR^\fd$, and consider a continuously distributed
target measure $\mu(d \bq) = p(\bq) d\bq$.  We often write $\mu$ in
the potential form \eqref{eq:trg:Msr:fnt:dim} convenient for
Hamiltonian methods, i.e.  $\mu(d\bq) \propto e^{-\UPot(\bq)} d\bq$
for some suitably regular potential function $\UPot: \RR^\fd \to \RR$.

\subsubsection{The classical Metropolis-Hastings formulation}

Start first with the situation when we consider a general continuously
distributed Markov kernel $Q(\bq, d \btq) := q(\bq, \btq) d \btq$,
where $q: \RR^{\fd} \times \RR^{\fd} \to \RR^+$ is a measurable mapping such that
$\int q( \bq, \btq) d \btq = 1$ for any $\bq \in \RR^\fd$.   Recalling
the identity \eqref{eq:RN:Der:form}, this is a special case of 
Tierney's formulation \cite{Tierney1998}.  Therefore, precisely
as a special case of \cref{rmk:flip:it:baby}, we derive the classical
Hastings ratio \cite{hastings1970monte}, i.e. \eqref{eq:hastings:A:R},
from \cref{thm:gen:rev:new} as follows: take $S(\bq,\bv) = (\bv, \bq)$
which is clearly a volume preserving involution.  Set $\Vker = Q$ so
that $\cM(d \bq, d \bv) = p(\bq) q(\bq,\bv) d\bq d\bv$.  Thus,
comparing \eqref{def:arFn:b} with \eqref{eq:push:forward:den} we
obtain \eqref{eq:hastings:A:R}.  Here, strictly speaking, we would
need that both $p > 0$ and $q > 0$ almost everywhere to obtain the
condition \ref{P2c} in \cref{thm:gen:rev:new}, but see
\cref{rem:noabscont}.  

Note that, with this choice for $S$, $\Vker$,
we may view \cite{hastings1970monte} as a special case of the
Metropolis-Hastings-Green algorithm, \cite{green1995reversible},
discussed above in \cref{rmk:alg:Rn:Preliminary}. 
Also, it is amusing to observe that 
the classical Hastings ratio, \eqref{eq:hastings:A:R}, falls out as special case of
\cref{alg:gHMC:FD:1}.  To see this write the proposal density $q$ in
the potential form
$q(\bq,\bv) = Z_\VPot(\bq)^{-1} e^{-\VPot(\bq,\bv)}$ and set
$\Vker = Q$.  In \eqref{def:leapfrog:int}, we select $\fu(\bv) = \bv$,
$\fv(\bq) = - \bq$ and set $\da = \db = 1$.  Similar connections
between each of the other algorithms in \cref{sec:approx:ham:methods}
and the acceptance probability \eqref{eq:hastings:A:R} can be drawn as well.

\subsubsection{Random Walk Monte Carlo}

Perhaps the simplest Metropolis-Hastings method is the Random Walk
Monte Carlo (RWMC) algorithm: Take a proposal map
$F: \RR^\fd \times \RR^\fd \to \RR^\fd$ given by
\begin{equation}
F(\bq, \bv) := \bq + \bv, \quad \bv \sim \msv,
\label{eq:RWMC:prop}
\end{equation}
where
$\msv(d \btq) = q(\btq) d\btq = Z^{-1}_\VPot e^{- \VPot(\btq)}d \btq$
is some jumping probability distribution on $\RR^\fd$. With this
definition, \eqref{eq:push:forward:den} and noting that
$F(\bq, \cdot)^{-1}(\btq) = \btq - \bq$ for any fixed
$\bq \in \RR^\fd$, the associated proposal kernel takes the form
\begin{align}
  \label{eq:rwmc:tf}
  Q(\bq, d\btq) :=F(\bq, \cdot)^* \nu(d \btq)
  = q(\btq - \bq) d\btq
  = Z^{-1}_\VPot e^{- \VPot(\btq- \bq)}d \btq.
\end{align}
Clearly this case falls under the rubric of the usual Hastings ratio,
\eqref{eq:hastings:A:R}, which translates to
\begin{align}
    \label{eq:rwmc:tf:ar}
  \alpha(\bq, \btq)
  := 1 \wedge \frac{p(\btq) q(\bq - \btq)}{p(\bq) q(\btq -\bq) }
  = 1 \wedge \exp( \UPot(\bq) - \UPot(\btq) + \VPot(\btq- \bq)
  - \VPot(\bq - \btq));
\end{align}
cf., e.g., \cite[Equation
3.38]{kaipio2005statistical} or \cite[Equation
11.2]{gelman2014bayesian}.
Typically, $\nu$ is taken to be a zero-mean
gaussian distribution $\mathcal{N}(0,\cC)$, for some covariance matrix
$\cC$ on $\RR^{\fd \times \fd}$.  Note that, in this particular
symmetric case and in general, the terms involving $q$ cancel in
\eqref{eq:hastings:A:R} which is the classical Metropolis case.

Let us observe that the machinery in \cref{subsec:inv:F} reveals a 
slightly different $S$ and
$\Vker$ which yield the same RWMC algorithm from
\cref{thm:gen:rev:new}.
Comparing \eqref{eq:rwmc:tf} with \eqref{def:inv:S} 
and using that $F(\bq, \cdot)^{-1}(\btq) = \btq - \bq$ we obtain
the involution $\Smap(\bq,\bv):=(\bq+\bv,-\bv)$ and then set
$\Vker(\bq, d \bv) := \nu(d\bv)$.  Thus, according to
\eqref{eq:Gen:HMC:AR}, it follows that
\begin{align*}
  \harFn(\bq, \bv)
  = 1 \wedge \frac{d \Smap^*(\msq \otimes \msv)}{d (\msq \otimes \msv)} (\bq, \bv)
  = 1 \wedge \frac{p(\bq+\bv) q(- \bv)}{p(\bq) q(\bv) }
  = 1 \wedge \exp( - \UPot(\bq + \bv) - \VPot(-\bv) + \UPot(\bq) + \VPot(\bv)),
\end{align*}
which thus recovers $\arFn$ by invoking \eqref{RN:eta:S:inv}. Finally
note that this choice of $S$ and $\Vker$ turns out to be a special
case of \cref{alg:gHMC:FD:1}. To see this take $\fu := 0$ (and/or
$\delta_1 = 0$), set $\fv( \bv):= \bv$, $\delta_2 =1$ and choose
$n=1$. According to \eqref{def:leapfrog:int}, we have $S = R \circ \cSol$ with
$R$ the usual momentum-flip involution as in \eqref{def:flip:R}.

\subsubsection{The MALA Algorithm}

As introduced in \cite{besag1994comments,roberts1996exponential}, the
MALA algorithm uses as its starting point a numerical resolution of
the Langevin dynamic
\begin{align}
\label{eq:LA:fin:dim}
d \bq + \frac{1}{2} \nabla \UPot( \bq) dt = dW,
\end{align}
so chosen in order that the target measure
$\mu(d\bq) \propto e^{-\UPot(\bq)} d\bq$ is an invariant.  Here $W$ is an
$\fd$-dimensional Brownian motion so that \eqref{eq:LA:fin:dim} is an
It\^{o} stochastic differential equation evolving on $\RR^\fd$.  By
taking an explicit Euler numerical discretization of
\eqref{eq:LA:fin:dim} with time step $\delta^2$, we obtain the proposal
kernel $Q(\bq, d \btq) := F(\bq,\cdot )^* \nu( d \btq)$ where
$\nu(d \bv) = (2 \pi )^{-\fd/2} e^{-\frac{1}{2}|\bv|^2} d\bv $,
i.e. normally distributed with unit covariance, and
\begin{align}
F(\bq, \bv ) :=
\bq - \frac{\delta^2}{2}\nabla \UPot( \bq)
+\delta  \bv.
\label{eq:MALA:prop:comp}
\end{align}
A corresponding acceptance probability $\arFn$ can be directly obtained as in 
\eqref{eq:ar:tierney}, i.e. $\arFn(\bq, \btq) = 1 \wedge d \eta^\perp/d \eta(\bq, \btq)$, with
\[
	\eta(d\bq, d \btq) = F(\bq, \cdot)^*\nu(d\btq) \mu(d\bq) 
	\propto \exp \left( - \UPot( \bq) - \frac{1}{2\delta^2} \left|\btq - \bq
	+ \frac{\delta^2}{2}\nabla \UPot( \bq) \right|^2 \right)
	d \bq d \btq,
\]
so that 
\begin{align}
\arFn(\bq, \btq) =
1 \wedge
\exp\left( -\UPot( \btq)
- \frac{1}{2\delta^2} | \bq - \btq
+ \frac{\delta^2}{2}\nabla  \UPot(\btq)|^2
+ \UPot( \bq)
+ \frac{1}{2\delta^2} |\btq - \bq
+ \frac{\delta^2}{2}\nabla \UPot( \bq)|^2 \right).
\label{eq:AR:MALA}
\end{align}

Alternatively, we notice that under this setting the MALA algorithm
can also be seen as a particular case of \cref{alg:method:abs}
associated to \cref{thm:gen:rev:new}, by defining an involution map
$S$ as in \cref{prop:inv:S} and invoking \cref{thm:Tierney},
(ii). Indeed, observe that, for each fixed $\bq$, $F(\bq, \cdot)$ is
invertible in $\bv$ with
\begin{align*}
F(\bq, \cdot)^{-1}(\btq)
= \frac{1}{\delta} \left(\btq - \bq
+ \frac{\delta^2}{2}\nabla \UPot( \bq)\right).
\end{align*}
Invoking \cref{prop:inv:S}, one obtains the desired involution $S$
from \eqref{def:inv:S} as
\begin{align}
S(\bq, \bv)
= \left( \bq - \frac{\delta^2}{2}\nabla \UPot( \bq) +\delta  \bv,
\frac{\delta}{2}\nabla \UPot( \bq) -  \bv +
\frac{\delta}{2}
\nabla\UPot\left(  \bq - \frac{\delta^2}{2}\nabla \UPot( \bq) +\delta  \bv\right)\right).
\label{eq:MALA:involution}
\end{align}
The acceptance probability $\harFn$ from \cref{thm:gen:rev:new},
i.e.
$\harFn(\bq, \bv) = 1 \wedge d S^*(\mu \otimes \nu)/d (\mu \otimes
\nu)(\bq, \bv)$, is given particularly in this finite-dimensional
context by formula \eqref{eq:Gen:HMC:AR}, with
$p(\bq) \propto e^{-\UPot(\bq)}$ and
$q(\bq,\bv) \propto e^{-\frac{1}{2}|\bv|^2}$. However, computing
$\nabla S$ directly from the definition in \eqref{eq:MALA:involution}
might turn out to be an impractical task.

Instead, we notice that MALA is a special case of \cref{alg:gHMC:FD:1}
and that $S$ in \eqref{eq:MALA:involution} can be written as
$R \circ \cSol_{1,\dT}$, where $R$ is the velocity-flip involution
$R(\bq, \bv) = (\bq, - \bv)$ and $\cSol_{1,\dT}$ is the numerical
integrator defined in \eqref{def:leapfrog:int} with $n=1$,
$2\da = \db = \delta$ and applied to the Hamiltonian dynamic
\eqref{eq:Ham:dyn:comp:fm} with canonical $J$ as in \eqref{eq:cls:J}
in $\RR^{2\fd}$ corresponding to the separated Hamiltonian
$\Ham(\bq, \bv) = \UPot(\bq) + \frac{1}{2} |\bv|^2$,
$(\bq, \bv) \in \RR^{2\fd}$, leading us to choose $\fu(\bv) = \bv$ and
$\fv(\bq) = -\nabla \UPot(\bq)$.  It thus follows from
\cref{cor:sep:dyn} (or alternatively \cref{prop:Ham:props:gen}, (i))
that $S$ is volume-preserving, so that
$|\det \nabla S(\bq, \bv)| = 1$, for all $(\bq, \bv) \in \RR^{2\fd}$.

Now from \eqref{eq:Gen:HMC:AR}, or equivalently \eqref{eq:ar:let:it:flip}, we have
\begin{align*}
\harFn(\bq, \bv) = 1 \wedge e^{-\Ham(S(\bq,\bv)) + \Ham(\bq, \bv)}=
1 \wedge
\exp\left( - \UPot( F(\bq, \bv))
- \frac{1}{2} |F(F(\bq,\bv), \cdot)^{-1}(\bq)|^2
+\UPot(\bq) + \frac{1}{2} |\bv|^2\right).
\end{align*}
Invoking \eqref{RN:eta:S:inv}, we thus recover $\arFn$ as in
\eqref{eq:AR:MALA}. This shows that in this setting MALA is a
particular case of \cref{alg:method:abs}, or equivalently
\cref{alg:gHMC:FD:1}, corresponding to the involution $S$ as given in
\eqref{eq:MALA:involution} and the kernel
$\Vker(\bq, d\bv) = \nu(d\bv) \propto e^{-\frac{1}{2}|\bv|^2} d\bv$.

\subsubsection{Hamiltonian Monte Carlo}\label{subsubsec:fd:HMC}

In the classical HMC algorithm from
\cite{duane1987hybrid,neal2011mcmc}, one takes $\VPot$ in the
Hamiltonian \eqref{eq:Ham} as in \eqref{eq:typ:sample:pot},
i.e. $\VPot(\bv) = \frac{1}{2} \langle M^{-1} \bv, \bv\rangle$, for a
so that the Markov kernel $\Vker(\bq,\cdot)$ is the
($\bq$-independent) $\RR^\fd$-valued gaussian $\nu = \cN(0,M)$. Here,
$M$ is a positive-definite mass matrix in $\RR^{\fd \times \fd}$ which
is an algorithmic parameter to be chosen. The corresponding
Hamiltonian dynamics is given as in \eqref{eq:Ham:dyn:comp:fm} with
the standard choice of matrix $J$ from \eqref{eq:cls:J}, thus written
as
\begin{align}\label{eq:trad:ham}
	\frac{d\bq}{dt} = M^{-1}\bv, \quad \frac{d\bv}{dt} = - \nabla \UPot(\bq).
\end{align}
A typical choice of numerical integrator $\Sol$ for such dynamics is
given by the leapfrog integrator
$\cSol_{n,\dT} = ( \Xi^{(1)}_{\dT/2} \circ \Xi^{(2)}_{\dT} \circ
\Xi^{(1)}_{\dT/2} )^n$ defined as in \eqref{def:leapfrog:int} for a
given time step $2 \da = \db = \dT > 0$, number of iterations
$n \in \NN$ and taking
$\fu(\bv) = M^{-1} \bv, \fv(\bq) = - \nabla \UPot(\bq)$.  Here note
that $T := \dT \cdot n$ is understood as the total integration time of
our numerical approximation of \eqref{eq:trad:ham}.  A direct
calculation (or see \cref{prop:Ham:props:gen}, \cref{cor:sep:dyn})
yields that each of the solution maps $\Xi^{(1)}_{\dT/2}$ and
$\Xi^{(2)}_{\dT}$ are reversible with respect to the ``momentum''-flip
involution $R(\bq, \bv) = (\bq, -\bv)$. Hence, by \cref{prop:inv:R:S},
(i), $\Sol_{n,\dT}$ is reversible with respect to $R$, so that
$S \coloneqq R \circ \cSol_{n,\dT}$ is an involution. The classical
HMC algorithm thus follows as a special case of \cref{alg:gHMC:FD:1}
under these choices.

Other more recent versions of HMC can also be seen to be special cases
of \cref{alg:gHMC:FD:1}. For example, in the relativistic HMC method
from \cite{lu2017relativistic} the kinetic portion $\VPot$ of the
Hamiltonian in \eqref{eq:Ham} is given as
\begin{align}
\label{eq:rel:mom:pot}
\VPot(\bv) = m c^2 \left( \frac{| \bv |^2}{m^2c^2} +1  \right)^{1/2}
\end{align}
up to the user-determined algorithmic parameters $m,c >0$,
In the physical analogy drawn here these parameters correspond to ``mass'' and
the ``speed of light'', respectively. To connect back to \cref{alg:gHMC:FD:1}
we keep the same choices for
$\Vker$, $R$ as the classical case and we now take $\fu(\bv) = \VPot(\bv)$
as in \eqref{eq:rel:mom:pot} leaving $\fv(\bq) = - \nabla \UPot(\bq)$ 
to once again define $\Sol= ( \Xi^{(1)}_{\dT/2} \circ \Xi^{(2)}_{\dT} \circ
\Xi^{(1)}_{\dT/2} )^n$ according to the associated leap-frog steps $\Xi^{(1)}$,
$ \Xi^{(2)}$.

Another version is the Riemannian manifold Hamiltonian Monte Carlo
(RMHMC) introduced in \cite{GirolamiCalderhead2011}. Here one
considers $\VPot$ in \eqref{eq:Ham} as the negative log-density of a
normal distribution with ``position''-dependent covariance
matrix. More specifically,
\[
	\VPot(\bq, \bv) = \frac{1}{2} \langle M(\bq)^{-1} \bv, \bv \rangle, 
\]
so that $\Vker(\bq,\cdot) = \cN(0, M(\bq))$. The corresponding
Hamiltonian dynamics is written as in \eqref{eq:Ham:dyn:comp:fm} with
$J$ taken in the canonical form as in \eqref{eq:cls:J}. The standard
numerical integrator in this case is given by the implicit mapping
$\Sol_{n,\dT}$ defined in \eqref{def:Sol:fd}, for given choices of
time step $\dT >0$ and number of steps $n \in \NN$ and $\fu, \fv$
taken as equalities in \eqref{eq:apx:SD:2} and note that here
$Z_{\VPot}(\bq) = \frac{1}{2}\ln(2\pi) + \frac{\fd}{2} \ln (\det
M(\bq))$. From \cref{prop:HMC:Leap:Frog}, it follows that
$\Sol_{n,\dT}$ is symplectic (and hence volume preserving) as well as
reversible with respect to the momentum-flip involution $R$ in
\eqref{def:flip:R}. Therefore, the RMHMC method follows as a special
case of \cref{alg:gHMC:FD:3} under this setting.

\subsection{Hilbert space methods}
\label{subsec:app:Hilb}

As in the setting from \cref{sec:HMC:inf:dim}, here we consider
examples where $\spq$ is a separable Hilbert space, and the target
measure $\mu$ is of the form \eqref{Gibbs:meas}. Namely,
$\mu(d\bq) \propto e^{-\Pot(\bq)}\mu_0(d\bq)$, with
$\mu_0 = \cN(0,\cC)$, for a suitable potential function
$\Pot: \spq \to \RR$ and covariance operator $\cC: \spq \to \spq$
which is symmetric, positive and trace-class.  In particular
we will see that all of the algorithms presented in this subsection
fall out as special case of \cref{alg:method} under suitable parameter
choices.

\subsubsection{The Preconditioned Crank-Nicolson (pCN) algorithm}
\label{subsubsec:pCN}

The preconditioned Crank-Nicolson MCMC algorithm
\cite{beskos2008,cotter2013mcmc} is derived from the dynamics
\eqref{mod:Langevin} in the particular case when $\agp = 0$ and
$\cK = \cC$, so that
\[
d\bq = - \frac{1}{2} \bq dt + \sqrt{\cC} d W,
\]
which defines an Ornstein-Uhlenbeck process, and holds
$\mu_0 = \cN(0,\cC)$ as an invariant measure.

Following a similar derivation as in
\eqref{disc:Langevin}-\eqref{def:F:lolmcmc}, the pCN proposal map is
written as
\begin{align}\label{def:F:pCN}
  F(\bq, \bv) = \ndt \bq + \sqrt{1 - \ndt^2} \bv,
  \quad \bq \in \spq, \, \bv \sim \mu_0 = \cN(0,\cC),
\end{align}
namely we can write the proposal kernel in the form \eqref{ass:kern:tierney}
with this $F$ and $\Vker(\bq,d\bv) = \mu_0(d\bv)$.
Here we recall that $\ndt \coloneqq (4 - \dT)/(4 + \dT)$ for a fixed
time step $\dT > 0$.

As in \cite{Tierney1998}, a suitable accept-reject function is given
as $\arFn(\bq, \btq)= 1 \wedge d \eta^\perp/ d \eta(\bq, \btq)$, for
$\bq, \btq \in \spq$, where
$\eta(d \bq, d\btq) = F(\bq, \cdot)^*\mu_0(d\btq)\mu(d\bq)$ and
$\eta^\perp(d\bq, d\btq) = \eta(d\btq, d \bq)$, with $F$ as in
\eqref{def:F:pCN}, $\mu_0 = \cN(0,\cC)$ and $\mu$ as in
\eqref{Gibbs:meas}. Such $\arFn$ can be computed by noticing that
\[
  \frac{d\eta^\perp}{d \eta}(\bq, \btq) = \exp[\Pot(\bq) - \Pot(\btq)]
  \frac{d \eta_0^\perp}{d \eta_0}(\bq, \btq),
\]
where
$\eta_0(d\bq, d \btq) \coloneqq F(\bq, \cdot)^*\mu_0(d\btq)
\mu_0(d\bq)$. Further, it is not difficult to check, e.g. via
equivalence of characteristic functionals, that
$\eta_0^\perp = \eta_0$, so that
$d \eta_0^\perp/d\eta_0 (\bq, \btq)= 1$ for $\eta_0$-a.e.
$(\bq, \btq) \in \spq \times \spq$ and thus \be\label{arFn:pCN}
\arFn(\bq, \btq) = 1 \wedge \exp[\Pot(\bq) - \Pot(\btq)] \ee
(cf. e.g. \cite{beskos2008,cotter2013mcmc,beskos2017geometric}).

As an alternate route, $\arFn$ can also be computed by invoking
formula \eqref{RN:eta:S:inv} with $S$ given by \cref{prop:inv:S}, as
done in \eqref{alpha:lolmcmc}. Indeed, taking $f = 0$ in
\eqref{alpha:lolmcmc} yields immediately \eqref{arFn:pCN}. Regarding
the connection to \cref{alg:method}, from
\eqref{split:inf:Lang}, here we notice that the associated involution
$S$ derived from \eqref{def:inv:S} can be written as
$S = R \circ \Xi_{\db}^{(2)}$, where $R$ is the momentum-flip operator
\eqref{def:flip:R}, $\db > 0$ is such that $\ndt = \cos \db$, and
$\Xi_t^{(2)}$ is as defined in \eqref{def:Xi2}.

\subsubsection{The infinite-dimensional Metropolis-adjusted
  Langevin algorithm ($\infty$MALA)}
\label{subsubsec:infMALA}

The infinite-dimensional Metropolis-adjusted Langevin algorithm
($\infty$MALA) \cite{beskos2008,cotter2013mcmc} is derived from the
Langevin dynamic \eqref{eq:Langevin:a} via the numerical
discretization \eqref{disc:Langevin}. In this case, for a fixed time
step $\dT > 0$, it follows from \eqref{def:F:lolmcmc} with
$\agp = \cC D \Pot$ that the proposal map is given by
\begin{align}\label{def:F:mala}
  F(\bq, \bv) \coloneqq \ndt \bq + \sqrt{1 - \ndt^2}
  \left( \bv - \frac{\sqrt{\dT}}{2} \cC D \Pot(\bq)\right),
  \quad \bq \in \spq, \, \bv \sim \mu_0,
\end{align}
where we recall that $\ndt = (4 - \dT)/(4 + \dT)$. Together with the
decomposition in \eqref{split:inf:Lang}, it thus follows that
$\infty$MALA is the particular case of \cref{alg:method} with the
specific choices $\HmT = 0$ (i.e. $\Vker(\bq, \cdot) = \mu_0$ for all
$\bq \in \spq$), $\agp = \cC D \Pot$, $\da := \sqrt{\dT}/2$,
$\db := \cos^{-1}\ndt$, and $n = 1$.

Moreover, from \eqref{alpha:lolmcmc} it follows that an accept-reject
function for $\infty$MALA is given by
\begin{align}\label{arFn:mala}
    \arFn(\bq, \btq) = 1 \wedge \frac{\beta(\btq, \bq)}{\beta(\bq, \btq)},
\end{align}
where
\[
    \beta(\bq, \btq) \coloneqq \exp \left( - \Pot(\bq) - \frac{\dT}{8}
      |\cC^{1/2} D \Pot(\bq)|^2
      - \frac{\sqrt{\dT}}{2}
      \left\langle \frac{\btq - \rho \bq}{\sqrt{1 - \rho^2}},
        D \Pot(\bq) \right\rangle \right),
\]
for $\eta$-a.e. $(\bq, \btq) \in \spq \times \spq$. Here,
$\eta(d\bq, d \btq) = F(\bq, \cdot)^*\mu_0(d\btq)\mu(d\bq)$ with $F$
as given in \eqref{def:F:mala}.

\subsubsection{The infinite-dimensional Hamiltonian Monte Carlo
  ($\infty$HMC) algorithm}
\label{subsubsec:pHMC}

The infinite-dimensional Hamiltonian Monte Carlo algorithm introduced
in \cite{Beskosetal2011} is the particular case of \cref{alg:method}
when $\HmT = 0$ (i.e. $\Vker(\bq, \cdot) = \mu_0$ for all
$\bq \in \spq$), $\agp = \cC D \Pot$, and when the discretization
parameters $\da$, $\db$ from \eqref{Verlet:int} are defined as
$\da := \dT/2$ and $\db := \dT$ for some fixed time step $\dT >
0$. From \eqref{RN:RS:mu:mu0}, it thus follows that an acceptance
probability in this case is given by
\begin{align*}
  \harFn(\bq_0, \bv_0)
  &= 1 \wedge \frac{d (R \circ \hat{S})^*(\mu \otimes \mu_0)}
    {d (\mu \otimes \mu_0)} (\bq_0, \bv_0) \\
    &= 1 \wedge \exp\left( \Pot(\bq_0) - \Pot(\bq_n)
      - \frac{\dT^2}{8} \left[ |\cC^{1/2} D \Pot(\bq_0)|^2
      - |\cC^{1/2} D\Pot(\bq_n)|^2 \right] \right.
    \\
  & \qquad \qquad \qquad \left.
    + \dT \sum_{i=1}^{n-1} \langle \bv_i, D\Pot(\bq_i) \rangle 
    + \frac{\dT}{2} \left[ \langle \bv_0, D\Pot(\bq_0) \rangle
    + \langle \bv_n, D \Pot (\bq_n) \rangle  \right] \right),
\end{align*}
where $R$ is the ``momentum-flip'' map \eqref{def:flip:R}, and we
invoked similar notation as in \eqref{not:int:i}.

A geometric version of this $\infty$HMC algorithm that considers a
certain position-dependent kinetic energy as in \eqref{eq:non:sep:Ham}
was developed in \cite{beskos2017geometric}. Specifically, the kernel
$\Vker(\bq,\cdot)$ at each $\bq \in \spq$ is taken to be a Gaussian
$\cN(0,\cK(\bq))$, for a position-dependent covariance operator
$\cK(\bq):\spq \to \spq$ that is assumed to be trace-class, symmetric
and strictly positive definite. Here one assumes additionally that
$\text{Im}(\cK(\bq)^{1/2}) = \text{Im}(\cC^{1/2})$, and that
$ (\cC^{-1/2}\cK(\bq)^{1/2})(\cC^{-1/2}\cK(\bq)^{1/2})^* - I$ is a
Hilbert-Schmidt operator on $\spq$, for $\mu_0$-a.e. $\bq \in
\spq$. Under these assumptions, it follows from the Feldman-Hajek
theorem that $\Vker(\bq, \cdot) = \cN(0, \cK(\bq))$ and
$\mu_0 = \cN(0,\cC)$ are mutually absolutely continuous for
$\mu_0$-a.e. $\bq \in \spq$ (see e.g. \cite{DPZ2014}), so that $\HmT$
in \eqref{eq:M:gen:inf:form} is well-defined. The authors then
consider the following surrogate dynamic from the full corresponding
Hamiltonian system
\begin{align*}
  \frac{d\bq}{dt} = \bv, \quad \frac{d\bv}{dt}
  = - \cK(\bq) [\cC^{-1} \bq + D \Pot(\bq)],
\end{align*}
which is thus a particular case of the general system
\eqref{mod:ham:dyn} with
\begin{align}\label{agp:geom:inf:HMC}
    \agp(\bq) = \cK(\bq) [(\cC^{-1} - \cK^{-1}(\bq)) \bq + D \Pot(\bq)].
\end{align}
Due to the fixed assumptions on $\cK(\bq)$ and $\cC$, it follows that
such $\agp$ satisfies \eqref{eq:f:CM:reg}. Proceeding with a numerical
splitting as in \eqref{split:ham:1}-\eqref{split:ham:2},
\eqref{Verlet:int}, we obtain that this geometric version of the
$\infty$HMC algorithm is a particular case of \cref{alg:method} with
the choices $\Vker(\bq, \cdot) = \cN(0,\cK(\bq))$, $\agp$ as in
\eqref{agp:geom:inf:HMC}, $\da = \dT$ and $\db = \dT/2$ for a given
time step $\dT > 0$, and any number of iterative steps $n \in \NN$.

\section*{Acknowledgements}

Our efforts are supported under the grants DMS-1816551 (NEGH) and
DMS-2009859 (CFM).  This work was partially conceived over the course
of two research visits: by CFM to New Orleans, LA in January 2020 and
by NEGH and CFM to Blacksburg, VA in February 2020.  We are grateful
to the Tulane and Virginia Tech math departments for hosting these
productive visits.  

We would like to thank a number of our colleagues for extensive and
invaluable feedback surrounding this work: Geordie Richards for
additional references concerning recent developments in the study of
Gibbs measures for dispersive PDEs, Andrew Holbrook for extensive
references from the machine learning and statistics literature and
specifically for pointing us to the Metropolis-Hastings-Green
algorithm as well as a body of recent work on surrogate trajectory
methods, and Gideon Simpson for suggesting the extension of
\cref{thm:gen:rev:new} to include a more general proposal space $\spv$
in the formulation found in the second draft of this manuscript.
Finally we would like to thank Radford Neal, Michael Betancourt,
Shiwei Lan and Christophe Andrieu for further inspiring conversations
that helped us contextualize this work.

\begin{footnotesize}
\addcontentsline{toc}{section}{References}
\bibliographystyle{alpha}
\bibliography{./bib}
\end{footnotesize}

\newpage
\begin{multicols}{2}
\noindent
Nathan E. Glatt-Holtz\\ {\footnotesize
Department of Mathematics\\
Tulane University\\
Web: \url{http://www.math.tulane.edu/~negh/}\\
Email: \url{negh@tulane.edu}} \\[.2cm]

\noindent Justin Krometis\\
{\footnotesize
Advanced Research Computing\\
Virginia Tech\\
Web: \url{https://www.arc.vt.edu/justin-krometis/}\\
Email: \href{mailto:jkrometis@vt.edu}{\nolinkurl{jkrometis@vt.edu}}} \\[.2cm]

\columnbreak

\noindent Cecilia F. Mondaini \\
{\footnotesize
  Department of Mathematics\\
  Drexel University\\
  Web: \url{https://drexel.edu/coas/faculty-research/faculty-directory/mondaini-cecilia/}\\
  Email: \url{cf823@drexel.edu}}\\[.2cm]
 \end{multicols}

\end{document}